\newcommand{\Real}{\mathbb{R}}
\newcommand{\Basis}{\mathcal{B}}
\newcommand{\Ninf}{\mathcal{N}_{-\infty}(\gamma)}   			% large neighbourhood of central path
\newcommand{\Ninfp}{\mathcal{N}_{-\infty}(\gamma,\lambda)}   	% large neighbourhood of perturbed central path
\newcommand{\sfsp}{\mathcal{F}^{0}_{\lambda}}   				% strictly feasible set for perturbed problems
\newcommand{\sspp}{\Omega^{P}_{\lambda}}		% solution set of the primal problem of the perturbed problems 
\newcommand{\ssdp}{\Omega^{D}_{\lambda}}		% solution set of the dual problem of the perturbed problems
\newcommand{\sspo}{\Omega^{P}}				% solution set of the primal problem of the original problems 
\newcommand{\ssdo}{\Omega^{D}}				% solution set of the dual problem of the original problems 
\newcommand{\Actv}{\mathcal{A}}				% active set
\newcommand{\Iactv}{\mathcal{I}}				% inactive set
\newcommand{\Nactv}{\mathcal{Z}}                           % not decided set
\newcommand{\Sactv}{\Actv_{+}}				% strictly active set
\newcommand{\psas}{\bar{\Actv}_{+}}			% predicted strictly active set for LP
\newcommand{\pacs}{\bar{\Actv}}				% predicted active set for LP
\newcommand{\pntp}{(x,y,s)}					% point for the perturbed problem
\newcommand{\optSol}{\left( x^{*},y^{*},s^{*} \right)} 
\newcommand{\optSollambda}{\left( x^{*}_{\lambda},y^{*}_{\lambda},s^{*}_{\lambda} \right)}
\newcommand{\optSolp}{\left( \hat{p},\hat{y},\hat{q} \right)}
\newcommand{\mup}{\mu_{\lambda}}
\newcommand{\lp}{{\sc lp}}
\newcommand{\ipm}{\sc ipm}
\newcommand{\ipms}{{\sc ipm}s}
\newcommand{\hsd}{\sc hsd}
\newcommand{\lcp}{{\sc lcp}}
\newcommand{\kkt}{{\sc kkt}}
\newcommand{\iterKp}{(x^{k},y^{k},s^{k})}
\newcommand{\tspndeg}{\hyperref[test-random_set]{TS1}}
\newcommand{\tspddeg}{\hyperref[test-random_pd_degen_set]{TS2}}
\newcommand{\tsn}{\hyperref[test-netlib_set]{TS3}}
\newcommand{\vmin}[2]{%
\min\left\{ [ #1 \,\, #2 ] \right\}
}%
\newcommand{\cmin}[2]{%
\min\left\{#1,#2\right\}
}%
\newcommand{\cmax}[2]{%
\max\left\{#1,#2\right\}
}%
\newcommand{\Remark}{%
\paragraph{Remark.}
}
\newcounter{algo}[section]
\newcommand{\algo}[3]{\refstepcounter{algo}
\begin{center}\begin{figure}[htbf]
\framebox[\textwidth]{
\parbox{0.95\textwidth} {\vspace{1.5ex}
{\bf Algorithm \thealgo : #2}\label{#1}\\
\vspace*{-3ex} \mbox{ }\\
{#3} \vspace{1.5ex} }}
\end{figure}\end{center}}
\begin{document}
%%%%%%%%%%%%%%%%%%%%% title page %%%%%%%%%%%%%%%%%%%%%%%%%%%%%%

\title{Active-set prediction for interior point methods using controlled perturbations}

\titlerunning{Active-set prediction for {\sc ipm}s using controlled perturbations}        % if too long for running head

\author{Coralia Cartis \and Yiming~Yan}

\institute{Coralia Cartis \at
	Mathematical Institute, 
	University of Oxford, 
	Andrew Wiles Building, 
	Radcliffe Observatory Quarter, 
	Woodstock Road, 
	Oxford, OX2 6GG,
	United Kingdom.   \\
	\email{cartis@maths.ox.ac.uk}  
	\and
	Yiming Yan \at
        School of Mathematics, 
	University of Edinburgh,
        James Clerk Maxwell Building,
         The King's Buildings, 
         Mayfield Road,
	Edinburgh, EH9 3JZ, 
        United Kingdom.  \\
        \email{yiming.yan@ed.ac.uk} \\
        This author was supported by the Principal's Career Development Scholarship from the University of Edinburgh.
}

\date{Received: date / Accepted: date}
% The correct dates will be entered by the editor

% \date{April 15, 2014; Revised March 25, 2015 and September 5, 2015}

\maketitle

\renewcommand{\thefootnote}{\arabic{footnote}}

%%%%%%%%%%%%%%%%%%%%% Abstract %%%%%%%%%%%%%%%%%%%%%%%%%%%%%%
\begin{abstract}
We propose the use of controlled perturbations to address the challenging question of optimal active-set prediction for interior point methods. Namely, in the context of 
 linear programming, 
 we consider perturbing the inequality constraints/bounds so as to enlarge the feasible set.
We show that if the perturbations are chosen appropriately, the solution of the original problem lies on or close to the central path of the perturbed problem. We also find that a primal-dual path-following algorithm applied to the perturbed problem is able to accurately 
predict the optimal active set of the original problem when the duality gap for the perturbed problem is not too small;
furthermore, depending on problem conditioning, 
this prediction can happen sooner than predicting the active set for
the perturbed problem or when the original one is solved. 
Encouraging preliminary numerical experience is reported when comparing activity prediction for the perturbed and unperturbed problem formulations.
\keywords{Active-set prediction \and  Interior point methods \and Linear programming}
\end{abstract} 

%%%%%%%%%%%%%%%%%%%%% Main part %%%%%%%%%%%%%%%%%%%%%%%%%%%%%%

%%%%%%%%%%% Section: Introduction %%%%%%%%%%%%%%
\setcounter{page}{1}
\section{Introduction}    %% Update at 10.40pm on 11 April 2014
\label{sec-intro}
Optimal active-set prediction --- namely, identifying the active inequality constraints at the solution of a constrained optimization problem --- plays an important role
in the optimization process by removing the difficult combinatorial aspect of the problem and reducing it to an equality-constrained one that is in general easier to solve.
Active-set prediction is also crucial for efficient warmstarting and re-optimization capabilities of algorithms when a suite of closely related problems needs to be solved. 
Despite being state-of-the-art tools for solving large-scale Linear Programming ({\lp}) problems \cite{wright},
Interior Point Methods ({\ipms}) are well-known to encounter difficulties with active-set prediction due essentially to their construction. 
They generate iterates that progress towards the solution set through the (relative) interior of the feasible set, and thus avoid visiting possibly-many feasible vertices. 
This however, may also prevent {\ipms} from getting accurate information about the optimal active set early enough during their running.
When this information is more readily predictable/available towards the end of a run, as the iterates approach the solution set, the algorithm  has to solve increasingly ill-conditioned
and hence difficult, subproblems.  Finding ways to improve (even just partial) active set prediction for {\ipms} could thus be beneficial as it would allow earlier termination
of an otherwise ill-conditioned and computationally expensive process by say, projecting onto the solution set (as in finite termination~\cite{ye}), 
help with reducing the problem size or with obtaining a vertex solution  at the cost of just a few additional (and less expensive) simplex method~iterations.

%% State-of-art methods 
%   Literature review for active-set prediction 
Various ways have been devised for {\ipms} to predict the optimal
active set during their run, with the simplest being {\it cut-off}
\cite{gill1986,Karmarkar1991,McShane} --- which splits the variables
into active or inactive based on whether they are less than a
user-defined small value --- and the most well-known being {\it
  indicators} \cite{bakry} which form functions of
 iterates and identify the optimal active-set based on whether the values of these functions are less than a threshold. 
Mehrotra~\cite{Metrotra1991tr} suggests determining the active set by
a simple comparison of the relative increments of primal and dual
iterates, and Mehrotra and Ye~\cite{Mehrotra_Ye_1993} propose a
strategy to identify the active set by comparing the  primal variables
with the dual slacks; see~\cite{williams} for a review of active-set
prediction techniques for {\ipms} for {\lp} and
also~\cite{Oberlin2006} for a more recent survey.

Here we propose the use of {\it controlled perturbations}~\cite{cartis} for active-set prediction for {\ipms}.\footnote{Note that
\cite{cartis} proposed the use of such perturbations for creating a
sequence of LPs with strict interior, converging to the original LP in
the limit, so as to find the affine dimension of the feasible set of
the original LP and 
well-centred points in  Phase I of {\ipms}; a different focus and
approach than here. A relaxation technique for Mathematical
Programs with Equilibrium Constraints (MPECs) was also proposed independently
in \cite{demiguelMPEC} that relaxes the bound constraints  similarly to~\cite{cartis}, 
but that also relaxes the complementarity constraint, in order to
create a sequence of
nonlinear programming relaxations with strict interior, which thus
satisfy a constraint qualification, and also converge to the original MPEC
in the limit.} Namely, 
we perturb the inequality constraints of the {\lp} problem (by a small amount) 
so as  to enlarge the feasible set of the problem, then solve the resulting perturbed problem(s) using a path-following {\ipm} while predicting
on the way the active set of the original {\lp} problem. As Figure~\ref{fig-actvPredictDemo} illustrates, provided the perturbations are chosen judiciously, 
the central path of the perturbed problem may pass close to the optimal solution of the
original {\lp} problem when the barrier parameter for the perturbed problem is `not too small'.  Thus we expect that  while still    `far' from optimality for the perturbed problem,
some IPM iterates for the perturbed problem would  nonetheless be close to optimality for the original {\lp} problem (such as the third and fourth iterate  in Figure~\ref{fig-actvPredictDemo})
and would provide
a good prediction of the original optimal active set.
As it may happen that the chosen perturbations are `too large' or not sufficiently effective for active-set
prediction, we allow them to shrink after each IPM iteration so that the resulting perturbed feasible set is smaller but still contains the feasible set of the original {\lp}.
 
Since we employ perturbed problems, albeit artificially, our proposal  may be remindful of warmstarting techniques for {\ipms} and the related active-set prediction techniques
that have been developed in that context; see for example, the surveys \cite{Engau2010,Skajaa2013}. Thus we briefly review relevant contributions here.
One of the main warmstarting strategies focuses on the `iterates', namely it manipulates the ({\ipm}-computed) near optimal or optimal iterates 
of the initial problem to obtain a primal-dual feasible and well-centred point for the  perturbed problems, 
see for example,~\cite{Gondzio1998,Gondzio1999,Yildirim00,Gondzio2008,Skajaa2013}. Another category of approaches works on the `problem formulation', namely modify the problem formulation by relaxing the nonnegativity constraints in the form of shifted logarithmic barrier variables, which has some similarity to our approach. 
Earlier works in this framework include Freund~\cite{Freund1991,Freund199119,Freund1996}, Mitchell~\cite{Mitchell1994} and Polyak~\cite{Polyak1992} with promising theoretical properties.
More relevant and closer in spirit to our approach here is~\cite{Benson2007}, where Benson and Shanno propose a primal-dual penalty strategy relaxing the nonnegativity constraints for both primal and dual decision variables and then penalising the relaxation variables in the objective; encouraging numerical results are also reported. Engau, Anjos and Vannelli~\cite{Engau2009,Engau2010} apply a simplified primal-dual slack approach: instead of shifting the bounds and penalising the relaxation variables, slack variables for nonnegative constraints are introduced and penalised in the objective.  
One of the main differences between the above techniques and our approach 
is that we consider perturbations as parameters, not variables that are updated in the run of the {\ipm}; furthermore, 
our focus  is different as we specifically aim to  predict the active set of the original {\lp} problem by using  `fake' perturbations.

Another set of techniques --- regularization for {\ipm}s~\cite{saunders1996solving,altman1999regularized,castro2010existence}
--- is also only loosely connected to our approach.
In order to improve the conditioning of the coefficient matrix arising
in calculating Newton directions in {\ipm} iterations, regularization
terms (of proximal type, weighted, and quadratic in the variables) are
added to the (primal and dual) objective function. These terms result
in a diagonal perturbation of the linear KKT  system of interest,
improving stability of factorization procedures. Note that the effect
of our perturbations on the Newton system is not the same in that no similar
diagonal perturbation is obtained. This is due to our formulations having no quadratic terms
in the variables in the primal-dual objective, only  a quadratic term in the
perturbations; and to our approach perturbing
the inequality constraints of the problem and allowing negative components
of the primal and dual slack variables. However, the two techniques
have similar aims in that they attempt to deal with the increasing
ill-conditioning that affects {\ipm}s by improved early active-set
prediction (hence earlier termination and better conditioning) for our
approach and by directly
improving the conditioning of the linear algebra through
regularization.

%% Our idea

% Literature review for warmstarting IPMs

%% Our idea
\begin{figure}[ht]
	\centering
\includegraphics[width=0.9\textwidth]{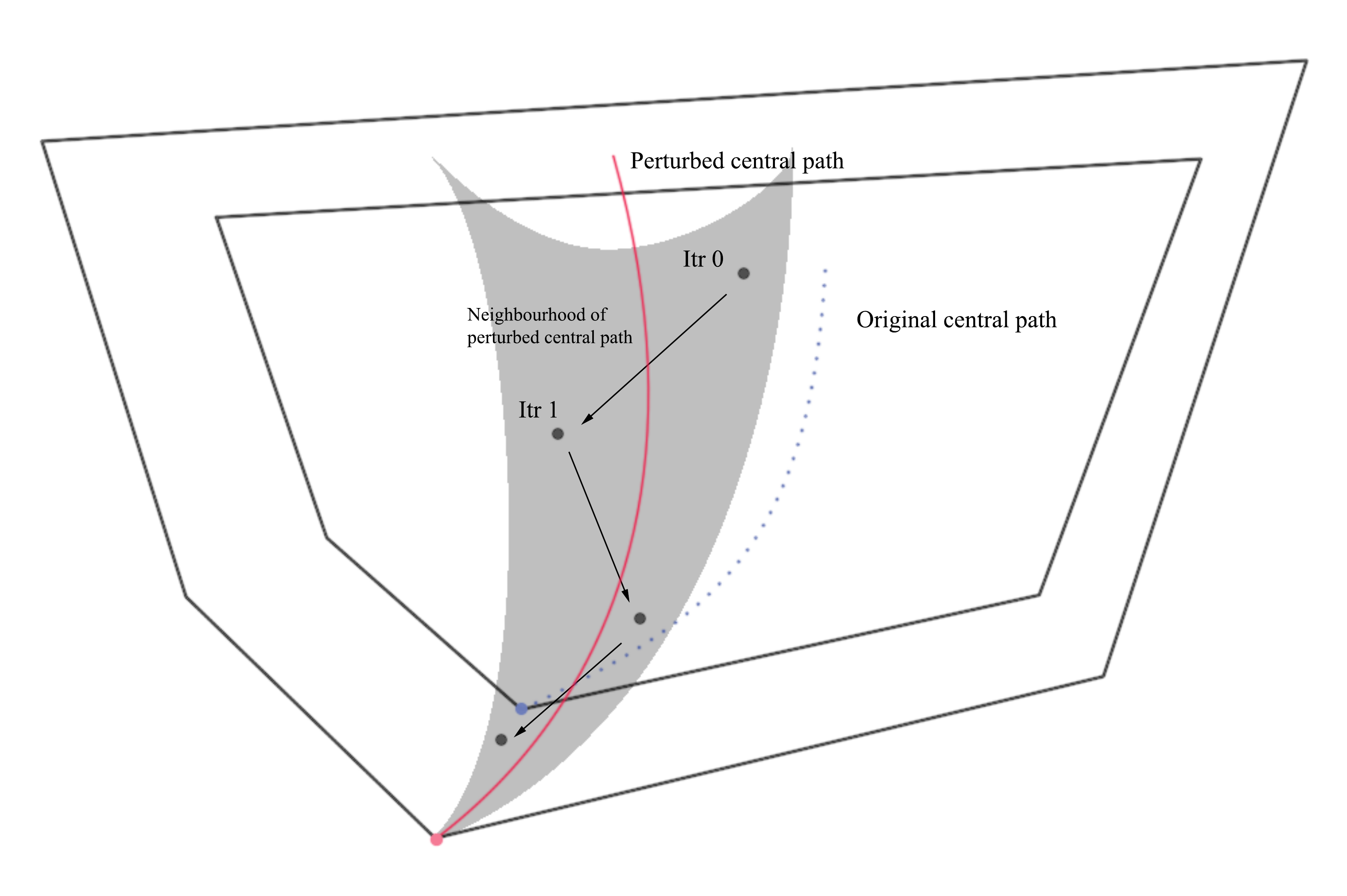}
	\caption{Enlarge the feasible set and predict the original active set}
	\label{fig-actvPredictDemo}
\end{figure}

To validate the use of controlled perturbations for active-set prediction for {\ipms}, after introducing them and the associated primal-dual perturbed {\lp} problems (Section 2),
we show that, for properly chosen perturbations, the solution of the original {\lp} problem lies on or 
close to the central path of the perturbed problems. 
Furthermore, in nondegenerate cases, the optimal active set of the perturbed problems remains the same as for the original problem (Section 3). 
We also prove that under certain conditions that do not necessarily require problem nondegeneracy, our predicted active sets provide  inner and outer approximations for the optimal active set of the original problem, and exactly
predict this set under a certain nondegeneracy assumption (but without requiring that the perturbed active set coincides with the original one).  We also find conditions on problem conditioning
that ensure that our prediction of the optimal
active set of the original {\lp} can happen sooner than the prediction of the optimal active set of the perturbed problems (so that our approach may not need to
solve the perturbed problems to high accuracy) (Section 5.1). Similarly, we characterise the situations when  our approach allows an earlier prediction of the original active
set as compared to the case when we solve and predict the original {\lp} directly (Section 5.2). In our preliminary numerical experiments (Section 6), we carry out two type of tests,
one comparing the accuracy of the predicted active sets and the other one exploring the case of crossover to simplex method.
% correction ratio
For verifying the accuracy of our active-set predictions, we apply an infeasible primal-dual path-following {\ipm} to perturbed and original randomly-generated {\lp} problems, terminate the algorithm at  various iterations and compare the accuracy of predictions using certain correction comparison ratios. We observe that when using perturbations, the precision of our predictions 
is generally higher --- namely, more than 4 times higher at certain iterations --- than that when we do not use perturbations. 
% crossover 
When crossing over to simplex method, we test the efficiency of our active-set predictions by comparing the number of simplex iterations needed 
to solve the original problem to optimality, after some initial {\ipm} iterations. We conduct this test on both randomly generated problems and a subset of Netlib problems. We find that when using perturbations for the {\ipm} iterations, we can save (on average) over $30\%$ simplex iterations compared to the case of not using any perturbations before cross-over to simplex.

%%%%%%%% Section: Controlled perturbations for linear programming problem %%%%%%%%%%%

\section{Controlled perturbations for linear programming}
\label{prel}
%\resetcounters
Consider the following pair of primal-dual linear programming ({\lp}) problems,
\begin{equation}
\label{eqv:originalPD}
\tag{PD}
    \begin{array}{lll}
        \mbox{  (Primal)} &\hspace*{.5cm} & \mbox{  (Dual)}\\
        \begin{array}{cl}
            \displaystyle  \min_{x \in \Real^{n}} & c^{T}x\\
            \displaystyle \mbox{s.t.}& Ax = b,\\
                       & x \geq 0,
                \end{array} &
                \hspace*{1cm} &
                \begin{array}{cl}
                    \displaystyle \max_{(y,s) \in \Real^{m} \times \Real^{n}} & b^{T}y\\
                    \displaystyle \mbox{s.t.} & A^{T}y + s = c,\\
                                & s \geq 0,
                \end{array}
        \end{array}
\end{equation}
where $A \in \Real^{m \times n}$, $b \in \Real^{m}$ and $c \in \Real^{n}$ with $m \leq n$ are problem data, and $\pntp \in \Real^{n} \times \Real^{m} \times \Real^{n}$.

We enlarge the feasible set of this~\eqref{eqv:originalPD} problem by using \textit{controlled perturbations}~\cite{cartis}, namely we relax the  nonnegativity constraints in~\eqref{eqv:originalPD} and consider the pair of perturbed problems,
\begin{equation}
\label{eqv:perPD}
\tag{PD$_{\lambda}$}
  \begin{array}{lll}
    \mbox{(Primal)} & \hspace*{.5cm} & \mbox{(Dual)}\\
    \begin{array}{cl}
        \displaystyle \min_{x \in \Real^{n}}    & {(c+\lambda)^{T}(x+\lambda)}\\
        \mbox{s.t.} & Ax = b,\\
                    & x \geq -\lambda,
    \end{array} &
    \hspace*{1.cm} &
    \begin{array}{cl}
       \displaystyle  \max_{(y,s)  \in \Real^{m} \times \Real^{n} } & {(b+A\lambda)^{T}y}\\
        \mbox{s.t.}  & A^{T}y + s = c,\\
                     & s \geq -\lambda,
    \end{array}
  \end{array}
\end{equation}
for some vector of perturbations $\lambda \geq 0$. (Note that different perturbations for $x$ and $s$ could be used, but for simplicity, we use the same vector of perturbations for both.)
It can be checked~\cite{cartis} that the two problems in~\eqref{eqv:perPD} are dual to each other. Note that if $\lambda \equiv 0$,~\eqref{eqv:perPD} coincides with~\eqref{eqv:originalPD}.
We denote the set of \textit{strictly feasible points} of~\eqref{eqv:perPD},
\begin{equation}
\label{eqv:strictlyFeasibleSet_per}
	\sfsp = \left\{\pntp \,\middle| \, Ax=b, \, A^{T}y+s = c, \, x+\lambda>0, \, s+\lambda>0 \,\right\}.
\end{equation}

Writing down the first order optimality conditions ({\kkt} conditions) for~\eqref{eqv:perPD}, according for example to~\cite[Theorem 12.1]{Nocedal2006}, we find that $\optSollambda$ is a \textit{(primal-dual) solution} for~\eqref{eqv:perPD} if and only if it satisfies the following system,
\begin{equation}
\label{eqv:kkt_perturbed}
	\begin{array}{rcl}
		Ax & =& b,\\
		A^{T}y+s & =& c, \\
		(X+\Lambda)(S+\Lambda)e & = & 0,\\
		(x+\lambda,s+\lambda) & \geq & 0,
	\end{array}
\end{equation}
where $\Lambda = \text{diag}(\lambda)$, $X = \text{diag}(x)$, $S = \text{diag}(s)$ and $e = [1 \,\, \ldots  \,\, 1]^{T}$. Again if $\lambda \equiv 0$ in~\eqref{eqv:kkt_perturbed}, we recover the optimality conditions for~\eqref{eqv:originalPD}. 

\paragraph{Equivalent formulation of~\eqref{eqv:perPD}.}

Letting $p = x + \lambda$ and $q = s + \lambda$, we can write~\eqref{eqv:perPD} in the equivalent form, 
\begin{equation}
\label{eqv:perPD_formPQ}
    \begin{array}{lll}
        \mbox{  (Primal)} &\hspace*{0.5cm} & \mbox{  (Dual)}\\
        \begin{array}{cl}
            \min_{p} & c_{\lambda}^{T}p\\
            \mbox{s.t.}& Ap = b_{\lambda},\\
                       & p \geq 0,
                \end{array} &
                \hspace*{1cm} &
                \begin{array}{cl}
                    \max_{(y,q)} & b_{\lambda}^{T}y\\
                    \mbox{s.t.} & A^{T}y + q = c_{\lambda},\\
                                & q \geq 0,
                \end{array}
        \end{array}
\end{equation}
where $c_{\lambda} = c+\lambda$, $b_{\lambda} = b+A\lambda$ and $\lambda \geq 0$.
{\kkt} conditions ensure that $(  p^{*}_{\lambda},y^{*}_{\lambda},q^{*}_{\lambda} )$ is the (primal-dual) solution of~\eqref{eqv:perPD_formPQ} if and only if it satisfies
\begin{equation}
\label{eqv:kkt_perturbed_form2}
    \begin{array}{rcl}
        Ap & = & b_{\lambda},\\
        A^{T}y + q & = & c_{\lambda},\\
        PQe & = & 0,\\
        (p,q) & \geq & 0,
    \end{array}
\end{equation}
where $P = \text{diag}(p)$ and $Q = \text{diag}(q)$.
It is easy to show that $\optSollambda$ is a~\eqref{eqv:perPD} solution if and only if $(  p^{*}_{\lambda},y^{*}_{\lambda},q^{*}_{\lambda} )$, where $p^{*}_{\lambda} = x^{*}_{\lambda} + \lambda$ and $q^{*}_{\lambda} = s^{*}_{\lambda} + \lambda$, is a solution of~\eqref{eqv:perPD_formPQ}.
Thus we can construct an optimal solution for~\eqref{eqv:perPD} from an optimal solution of~\eqref{eqv:perPD_formPQ} and vice versa.

\paragraph{The central path of~\eqref{eqv:perPD}.}
Following~\cite[Chapter 2]{wright}, we derive the central path equations for \eqref{eqv:perPD} to be
\begin{equation}
\label{eqv:perturbedCentralPath}
	\begin{array}{rl}
		Ax & =b,\\
		A^{T}y+s & =c, \\
		(X+\Lambda)(S+\Lambda)e & = \mu \, e,\\
		(x+\lambda,s+\lambda) & > 0,
	\end{array}
\end{equation}
where $\mu>0$ is the barrier parameter for the perturbed problem~\eqref{eqv:perPD}.
The central path of~\eqref{eqv:perPD} is well defined under mild assumptions, including
\begin{equation}
\label{asm-full_row_rank}
%\tag{Assumption}
\text{\bf Assumption: } \hspace*{15ex}	A  \text{ has full row rank } m. \hspace*{15ex} 
\end{equation}

\begin{lemma}[{\cite[Lemma 5.1]{cartis}}]
\label{prop-perturbedCentralPathWellDefined}
Let~\eqref{asm-full_row_rank} hold and $\lambda \geq 0$. Then the central path of the perturbed problem \eqref{eqv:perPD} is well defined, namely, the system~\eqref{eqv:perturbedCentralPath} has a unique solution for each $\mu>0$, provided $\sfsp$ in \eqref{eqv:strictlyFeasibleSet_per} is nonempty. In particular, if $\lambda > 0$,  $\sfsp$ is nonempty whenever~\eqref{eqv:originalPD} has a nonempty primal-dual feasible~set.
\end{lemma}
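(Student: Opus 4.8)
The plan is to reduce the statement to the classical existence-and-uniqueness theorem for the central path of a linear program in standard form, applied to the equivalent reformulation \eqref{eqv:perPD_formPQ}. First I would observe that the affine, invertible change of variables $p = x+\lambda$, $q = s+\lambda$ maps the central-path system \eqref{eqv:perturbedCentralPath} of \eqref{eqv:perPD} bijectively onto
\begin{equation*}
Ap = b_\lambda, \qquad A^{T}y + q = c_\lambda, \qquad PQe = \mu e, \qquad (p,q) > 0,
\end{equation*}
which is precisely the central-path system of the standard-form pair \eqref{eqv:perPD_formPQ}; under the same substitution, $\sfsp$ is nonempty if and only if \eqref{eqv:perPD_formPQ} possesses a point with $p > 0$ and $q > 0$, i.e.\ a strictly feasible primal point together with a strictly feasible dual point. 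Hence it suffices to prove that \eqref{eqv:perPD_formPQ} has a well-defined central path, and then transport the unique central point back to $\optSollambda$.

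For \eqref{eqv:perPD_formPQ} I would invoke the standard development in \cite[Chapter 2]{wright}. Fix $\mu > 0$ and consider the primal log-barrier subproblem $\min\{ c_\lambda^{T}p - \mu \sum_i \ln p_i : Ap = b_\lambda \}$ over $p > 0$. Strict convexity of the barrier objective on the affine feasible slice yields at most one minimizer, and full row rank of $A$ --- guaranteed by \eqref{asm-full_row_rank} --- makes the associated multiplier $y$, hence $q$ via $A^{T}y+q = c_\lambda$, unique as well. Existence of the minimizer follows from a coercivity argument: writing $c_\lambda^{T}p = b_\lambda^{T}\bar y + \bar q^{T}p$ for any dual strictly feasible $(\bar y,\bar q)$ with $\bar q > 0$ (and using $Ap = b_\lambda$), each term $\bar q_i p_i - \mu \ln p_i$ tends to $+\infty$ as $p_i \to 0^{+}$ or $p_i \to +\infty$, so the sublevel sets of the barrier objective are compact subsets of the open positive orthant and a minimizer is attained. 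Its first-order optimality conditions are exactly the displayed central-path system, which therefore has a unique solution $(p,y,q)$ for each $\mu > 0$; pulling this back through the change of variables gives the unique solution of \eqref{eqv:perturbedCentralPath}.

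For the ``in particular'' clause, suppose $\lambda > 0$ and that \eqref{eqv:originalPD} has a nonempty primal-dual feasible set, so there exist $x$ with $Ax = b$, $x \geq 0$, and $(y,s)$ with $A^{T}y+s = c$, $s \geq 0$. Then $x + \lambda \geq \lambda > 0$ and $s + \lambda \geq \lambda > 0$, hence $\pntp \in \sfsp$ and $\sfsp \neq \emptyset$.

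The main obstacle is the existence half of the central-path theorem for \eqref{eqv:perPD_formPQ}: uniqueness is an immediate consequence of strict convexity together with full row rank, but attaining the infimum of the barrier subproblem for \emph{every} $\mu > 0$ genuinely requires boundedness of its level sets, which is exactly where one must use that both the perturbed primal and the perturbed dual are strictly feasible (equivalently, $\sfsp \neq \emptyset$). Since this is precisely the content of the quoted \cite[Lemma 5.1]{cartis} and of \cite[Chapter 2]{wright}, in the write-up I would cite these directly rather than reproduce the compactness argument in full; the remaining steps --- the change of variables and the ``in particular'' clause --- are routine.
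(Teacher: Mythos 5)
Your argument is correct, and it is essentially the intended one: the paper does not prove this lemma itself but imports it from \cite[Lemma~5.1]{cartis}, and your reduction via $p=x+\lambda$, $q=s+\lambda$ to the standard-form pair \eqref{eqv:perPD_formPQ}, followed by the classical log-barrier existence/uniqueness argument of \cite[Chapter~2]{wright} (strict convexity plus full row rank of $A$ for uniqueness, dual strict feasibility for coercivity and hence attainment), is exactly the standard route, and indeed the same change of variables is the device the paper uses elsewhere (e.g.\ in the proof of Lemma~\ref{lem-boundXandSAccording2Partition}). Your direct verification of the ``in particular'' clause --- any \eqref{eqv:originalPD}-feasible $(x,y,s)$ satisfies $x+\lambda\geq\lambda>0$ and $s+\lambda\geq\lambda>0$ when $\lambda>0$ --- is also the right (and only needed) observation.
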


Note that if $\lambda > 0$, the condition required for the existence of the perturbed central path is weaker than that for the central path of \eqref{eqv:originalPD}. The latter requires~\eqref{eqv:originalPD} to have a nonempty \textit{strictly} feasible~set, namely, for there to be \eqref{eqv:originalPD} feasible points that strictly satisfy all problem inequality constraints.

%%%%%%%%%%%%%%% Section:  perturbed problem and their properties %%%%%%%%%%%%%%%%

\section{Perturbed problems and their properties}
\label{sec-properties}
%\resetcounters
\subsection{Perfect and relaxed perturbations}

Geometrically, the original optimal solution $\optSol$ of~\eqref{eqv:originalPD} may lie on  or near the central path of the perturbed problem~\eqref{eqv:perPD} for carefully chosen perturbations; see Figures~\ref{fig-perfectPerturbations} and~\ref{fig-relaxedPerturbations}. Algebraically, this happens if $\optSol$ satisfies the third relation in~\eqref{eqv:perturbedCentralPath} exactly or approximately.
We make these considerations precise in the next two theorems. 
\begin{figure}[h]
\begin{minipage}[b]{0.48\linewidth}
\centering
\includegraphics[width=\textwidth]{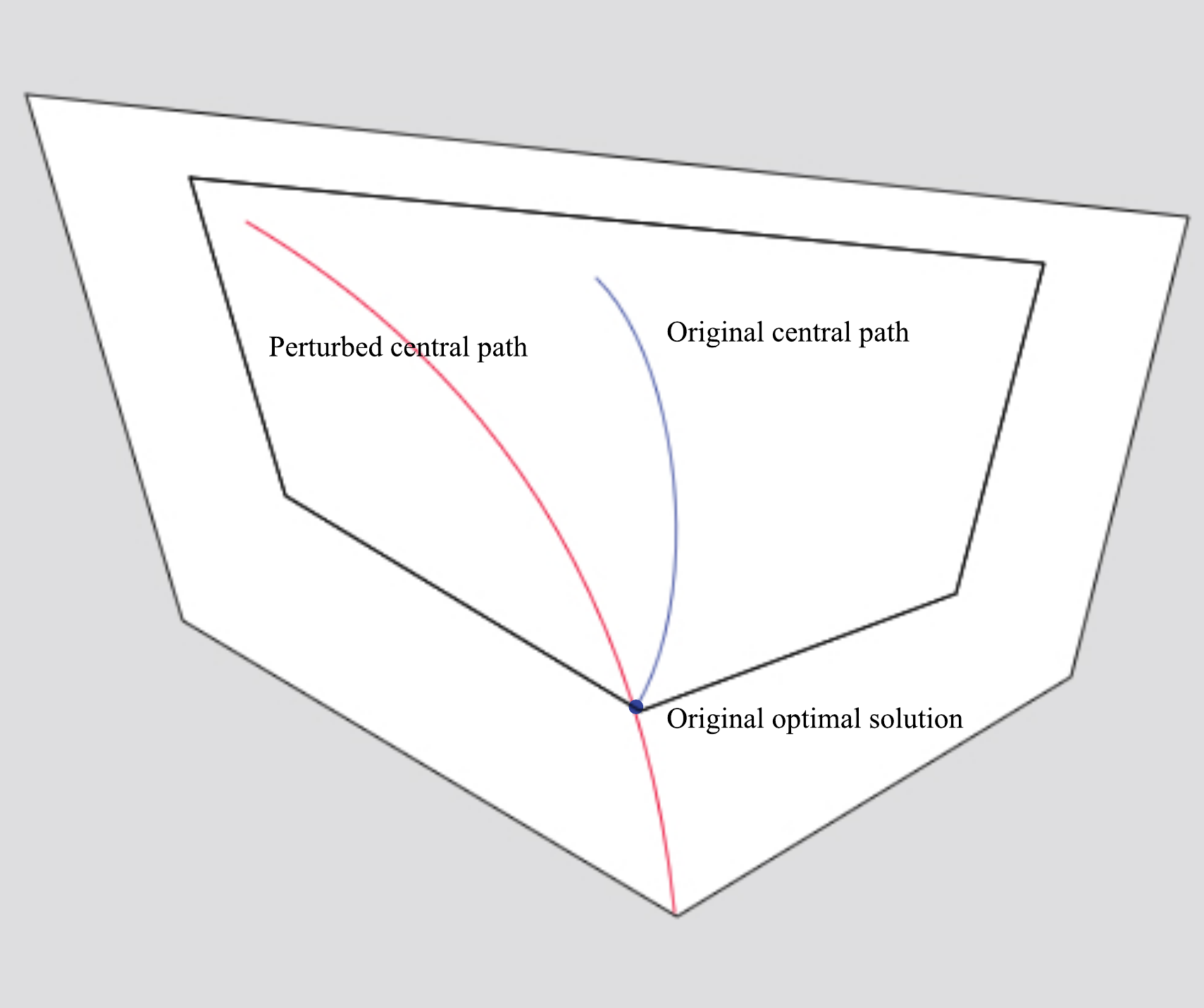}
\caption{Perfect perturbations.}
\label{fig-perfectPerturbations}
\end{minipage}
\hspace*{0.03\linewidth}
\begin{minipage}[b]{0.48\linewidth}
\centering
\includegraphics[width=0.99\textwidth]{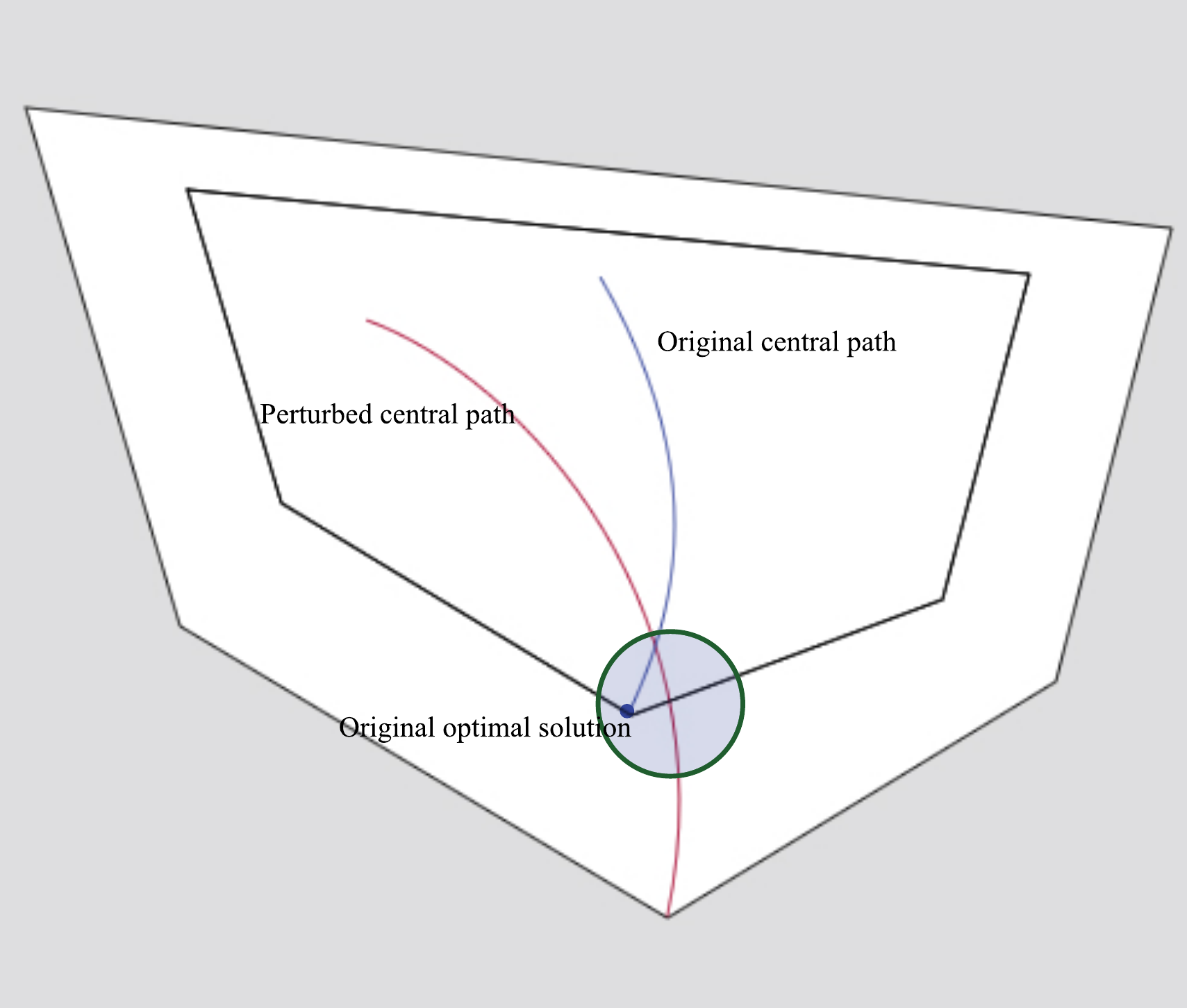}
\caption{Relaxed perturbations.}
\label{fig-relaxedPerturbations}
\end{minipage}
\end{figure}

\begin{theorem}[Existence of `perfect' perturbations]
\label{thm-optsolOnperturbedCentralpath}
Assume~\eqref{asm-full_row_rank} holds and $\optSol$ is a solution of \eqref{eqv:originalPD}. Let $\hat{\mu}>0$. Then there exists a vector of perturbations
$$\hat{\lambda} = \hat{\lambda}(x^{*},s^{*},\hat{\mu})>0,$$
such that the perturbed central path~\eqref{eqv:perturbedCentralPath} with $\lambda = \hat{\lambda}$ passes through $\optSol$ exactly when $\mu = \hat{\mu}$.
\end{theorem}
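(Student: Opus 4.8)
The plan is to use the complementarity satisfied by any solution of~\eqref{eqv:originalPD} to decouple the only nonlinear equation of the perturbed central-path system~\eqref{eqv:perturbedCentralPath} into $n$ independent scalar quadratics, and then to solve each of these for a strictly positive perturbation component. First I would note that, since $\optSol$ solves~\eqref{eqv:originalPD}, it satisfies $Ax^{*}=b$, $A^{T}y^{*}+s^{*}=c$, $x^{*}\geq 0$, $s^{*}\geq 0$, and, by the {\kkt} conditions for~\eqref{eqv:originalPD}, $x_{i}^{*}s_{i}^{*}=0$ for every $i$. Hence the first two equations of~\eqref{eqv:perturbedCentralPath} hold at $\optSol$ regardless of $\lambda$, and the whole task reduces to exhibiting $\hat{\lambda}>0$ with $(X^{*}+\hat{\Lambda})(S^{*}+\hat{\Lambda})e=\hat{\mu}\,e$ and $(x^{*}+\hat{\lambda},\,s^{*}+\hat{\lambda})>0$, where $\hat{\Lambda}=\text{diag}(\hat{\lambda})$.

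Fixing $i$ and substituting $x_{i}^{*}s_{i}^{*}=0$, the $i$-th component of the product equation reads $\hat{\lambda}_{i}^{2}+(x_{i}^{*}+s_{i}^{*})\hat{\lambda}_{i}-\hat{\mu}=0$. The left-hand side, as a function of $\hat{\lambda}_{i}$, equals $-\hat{\mu}<0$ at $\hat{\lambda}_{i}=0$ and increases to $+\infty$; by continuity it therefore has a unique positive root, namely
\[
\hat{\lambda}_{i}\;=\;\tfrac{1}{2}\Bigl(-(x_{i}^{*}+s_{i}^{*})+\sqrt{(x_{i}^{*}+s_{i}^{*})^{2}+4\hat{\mu}}\,\Bigr)\;>\;0 ,
\]
which depends only on $x_{i}^{*}$, $s_{i}^{*}$ and $\hat{\mu}$, so that $\hat{\lambda}=\hat{\lambda}(x^{*},s^{*},\hat{\mu})$ as claimed.

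It remains to check feasibility: since $x_{i}^{*}\geq 0$ and $\hat{\lambda}_{i}>0$ we have $x_{i}^{*}+\hat{\lambda}_{i}>0$, and likewise $s_{i}^{*}+\hat{\lambda}_{i}>0$, so $\optSol$ satisfies~\eqref{eqv:perturbedCentralPath} with $\lambda=\hat{\lambda}$ and $\mu=\hat{\mu}$; moreover this is the only value of $\mu$ that works, because the product equation forces $\mu$ to equal the common value $(x_{i}^{*}+\hat{\lambda}_{i})(s_{i}^{*}+\hat{\lambda}_{i})=\hat{\mu}$. I do not anticipate a real obstacle: the two points needing a little care are the invocation of complementarity at the~\eqref{eqv:originalPD} solution (a consequence of the {\kkt} conditions) and the strict positivity of the chosen root, which is precisely where $\hat{\mu}>0$ enters --- for $\hat{\mu}=0$ the root collapses to zero at any index with $x_{i}^{*}=s_{i}^{*}=0$, and $\hat{\lambda}>0$ could fail.
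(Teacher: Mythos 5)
Your proposal is correct and follows essentially the same route as the paper's proof: complementarity $x_i^*s_i^*=0$ reduces the product equation of~\eqref{eqv:perturbedCentralPath} to the scalar quadratics $\hat{\lambda}_i^2+(x_i^*+s_i^*)\hat{\lambda}_i-\hat{\mu}=0$, whose positive roots define $\hat{\lambda}$, with strict feasibility of $\optSol$ for~\eqref{eqv:perPD} following from $\hat{\lambda}>0$ (the paper additionally cites Lemma~\ref{prop-perturbedCentralPathWellDefined} to note the path is well defined, but this is a presentational rather than substantive difference).
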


\begin{proof}
Since $\optSol$ is an optimal solution of~\eqref{eqv:originalPD}, it is also primal-dual feasible, and so $\optSol \in \sfsp$ for any $\lambda > 0$.
Thus, according to Lemma~\ref{prop-perturbedCentralPathWellDefined}, the perturbed central path is well defined. Furthermore, if there exists a $\hat{\lambda}>0$ such that 
\begin{equation}
\label{eqv:onCentralPathProof-thirdEq}
\left(X^{*}+\hat{\Lambda}\right)\left(S^{*}+\hat{\Lambda}\right)e = \hat{\mu}e,
\end{equation}
then $\optSol$ is the unique solution of the perturbed central path equations~\eqref{eqv:perturbedCentralPath} with $\lambda = \hat{\lambda}$ and $\mu = \hat{\mu}$, which implies the central path of perturbed problems passes through $\optSol$. 
It remains to solve~\eqref{eqv:onCentralPathProof-thirdEq} for $\hat{\lambda} = [\hat{\lambda}_{1} \,\, \ldots \,\, \hat{\lambda}_{n}]^{T}$. Since $x^{*}_{i}s^{*}_{i} = 0$, $i=1,\ldots,n$,
we have that~\eqref{eqv:onCentralPathProof-thirdEq} is equivalent to
\[
 \hat{\lambda}_{i}^{2}  + \left(x^{*}_{i}+s^{*}_{i}\right)\hat{\lambda}_{i} - \hat{\mu}  = 0,\quad i=1,\ldots,n,
\]
whose positive root for each $i$ gives the corresponding component of the required~$\hat{\lambda}$.
\qed \end{proof}

It is a stringent and impractical requirement to force the optimal solution of the original problem to be exactly on the central path of the perturbed problems. Thus we relax this requirement to allow for the original solution to belong to a small neighbourhood of this path.

\begin{theorem}[Existence of relaxed perturbations]
\label{thm-optsol-in-the-neigh-of-perCP_general}
Assume~\eqref{asm-full_row_rank} holds and  $(x^{*},y^{*},s^{*})$ is a~\eqref{eqv:originalPD} solution and let $\hat{\mu}>0$ and $\xi \in (0,1)$. Then there exist vectors $\hat{\lambda}_{L} = \hat{\lambda}_{L}(x^{*},s^{*},\hat{\mu},\xi)>0$ and $\hat{\lambda}_{U} = \hat{\lambda}_{U}(x^{*},s^{*},\hat{\mu},\xi)>0$
such that for $\hat{\lambda}_{L} \leq \lambda \leq \hat{\lambda}_{U}$, $\optSol$ is strictly feasible for~\eqref{eqv:perPD} and satisfies
\begin{equation}
\label{def-symmetric_neighborhood}
\xi \hat{\mu} e \leq (X^{*}+\Lambda)(S^{*}+\Lambda)e\leq \frac{1}{\xi} \hat{\mu} e. 
\end{equation}
\end{theorem}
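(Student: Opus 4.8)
The plan is to follow the argument of Theorem~\ref{thm-optsolOnperturbedCentralpath} almost verbatim, replacing the single equation~\eqref{eqv:onCentralPathProof-thirdEq} by the pair of inequalities in~\eqref{def-symmetric_neighborhood} and then using monotonicity to pass from the two endpoints to the whole interval. First I would observe that strict feasibility of $\optSol$ for~\eqref{eqv:perPD} is automatic as soon as $\lambda>0$: the equalities $Ax^{*}=b$ and $A^{T}y^{*}+s^{*}=c$ hold because $\optSol$ solves~\eqref{eqv:originalPD}, while $x^{*}\ge 0$, $s^{*}\ge 0$ together with $\lambda>0$ give $x^{*}+\lambda>0$ and $s^{*}+\lambda>0$. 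Hence, once the vectors $\hat{\lambda}_{L},\hat{\lambda}_{U}>0$ are constructed, strict feasibility requires no further checking.

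For the neighbourhood condition, complementary slackness $x_{i}^{*}s_{i}^{*}=0$ shows that the $i$-th component of $(X^{*}+\Lambda)(S^{*}+\Lambda)e$ equals $\phi_{i}(\lambda_{i}):=\lambda_{i}^{2}+(x_{i}^{*}+s_{i}^{*})\lambda_{i}$, which is continuous and strictly increasing on $[0,\infty)$ with $\phi_{i}(0)=0$ and $\phi_{i}(\lambda_{i})\to\infty$. Since $\xi\in(0,1)$ gives $0<\xi\hat{\mu}<\hat{\mu}/\xi$, each of the equations $\phi_{i}(\lambda_{i})=\xi\hat{\mu}$ and $\phi_{i}(\lambda_{i})=\hat{\mu}/\xi$ has a unique positive solution; I would call these $\hat{\lambda}_{L,i}$ and $\hat{\lambda}_{U,i}$ and record the explicit value
\[
\hat{\lambda}_{L,i}=\frac{-(x_{i}^{*}+s_{i}^{*})+\sqrt{(x_{i}^{*}+s_{i}^{*})^{2}+4\xi\hat{\mu}}}{2}>0,
\]
with $\hat{\lambda}_{U,i}$ given by the same formula with $\xi\hat{\mu}$ replaced by $\hat{\mu}/\xi$. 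Monotonicity of $\phi_{i}$ then yields $0<\hat{\lambda}_{L,i}<\hat{\lambda}_{U,i}$ (indeed $\hat{\lambda}_{L,i}<\hat{\lambda}_{i}<\hat{\lambda}_{U,i}$ for the perfect perturbation $\hat{\lambda}_{i}$ of Theorem~\ref{thm-optsolOnperturbedCentralpath}) and, for every $\lambda_{i}\in[\hat{\lambda}_{L,i},\hat{\lambda}_{U,i}]$, $\xi\hat{\mu}=\phi_{i}(\hat{\lambda}_{L,i})\le\phi_{i}(\lambda_{i})\le\phi_{i}(\hat{\lambda}_{U,i})=\hat{\mu}/\xi$.

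Finally I would assemble $\hat{\lambda}_{L}=[\hat{\lambda}_{L,1},\dots,\hat{\lambda}_{L,n}]^{T}>0$ and $\hat{\lambda}_{U}=[\hat{\lambda}_{U,1},\dots,\hat{\lambda}_{U,n}]^{T}>0$. Because the bound on the $i$-th component of $(X^{*}+\Lambda)(S^{*}+\Lambda)e$ depends only on $\lambda_{i}$, the componentwise inequalities just obtained give the vector inequality~\eqref{def-symmetric_neighborhood} for every $\lambda$ with $\hat{\lambda}_{L}\le\lambda\le\hat{\lambda}_{U}$; combined with the strict feasibility noted above, this is exactly the assertion. The argument is elementary and there is no genuine analytic difficulty: the only points needing care are that the two quadratic roots are positive and correctly ordered --- which is precisely where $\xi\in(0,1)$ is used --- and that the estimate decouples across the $n$ coordinates, so that the scalar construction can be carried out independently in each. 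In particular, no nondegeneracy assumption is required.
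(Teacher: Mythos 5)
Your proof is correct and follows essentially the same route as the paper: strict feasibility is immediate from $\lambda>0$, and the neighbourhood condition decouples into the componentwise quadratic inequalities $\lambda_i^2+(x_i^*+s_i^*)\lambda_i\in[\xi\hat{\mu},\hat{\mu}/\xi]$, whose positive roots give exactly the paper's $(\hat{\lambda}_L)_i$ and $(\hat{\lambda}_U)_i$. Your explicit monotonicity argument for $\phi_i$ is just a slightly more verbose way of saying that solving those inequalities yields a well-defined interval, which is how the paper phrases it.
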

\begin{proof}
Clearly, $\optSol$ satisfies~\eqref{eqv:strictlyFeasibleSet_per} and so $\optSol \in \sfsp$ for any $\lambda = [ \lambda_{1}\,\, \ldots\,\,\lambda_{n} ]^{T} > 0$.  The inequalities~\eqref{def-symmetric_neighborhood} are equivalent to
\begin{equation}
\label{symmetric_neighborhood_eqn}
\left\{
    \begin{array}{rcl}
        \lambda_{i}^{2}+(x^{*}_{i}+s^{*}_{i})\lambda_{i} - \xi \hat{\mu} & \geq & 0\\
        \lambda_{i}^{2}+(x^{*}_{i}+s^{*}_{i})\lambda_{i} - \frac{1}{\xi} \hat{\mu} & \leq & 0,
    \end{array}
\right.
\end{equation}
for all $i \in \{1,\ldots,n\}$ and $\xi \in (0,1)$.
Solving~\eqref{symmetric_neighborhood_eqn} for $\lambda_{i}$, we obtain
\begin{equation}
\label{eqv:intervalForPer}
\left\{
    \begin{array}{rcl}
        \lambda_{i} \geq & \frac{-(x^{*}_{i}+s^{*}_{i})+\sqrt{(x^{*}_{i}+s^{*}_{i})^{2}+4\xi\hat{\mu} }}{2} = & \frac{2\xi\hat{\mu}}{ x^{*}_{i}+s^{*}_{i} + \sqrt{(x^{*}_{i}+s^{*}_{i})^{2}+4\xi \hat{\mu}}} = (\hat{\lambda}_{L})_{i},\\
        0< \lambda_{i} \leq & \frac{-(x^{*}_{i}+s^{*}_{i})+\sqrt{(x^{*}_{i}+s^{*}_{i})^{2}+\frac{4\hat{\mu}}{\xi}}}{2} = & \frac{\frac{2\hat{\mu}}{\xi}}{x^{*}_{i}+s^{*}_{i}+\sqrt{(x^{*}_{i}+s^{*}_{i})^{2}+\frac{4\hat{\mu}}{\xi }}} = (\hat{\lambda}_{U})_{i},
    \end{array}
\right.
\end{equation}
for all $i \in \{1,\ldots,n\}$.
For any $\xi \in (0,1)$, it is easy to see that~\eqref{eqv:intervalForPer} yields a well-defined interval for $\lambda_{i}$, $i \in \{1,\ldots,n\}$.
\qed \end{proof}

From the above theorem, we see that by choosing the perturbations judiciously, we can bring any solution of the original problem into a `neighbourhood' of the perturbed central path.

\subsection{Preserving the optimal active set}
Since we are interested in predicting the optimal active set of the original problem, this section addresses the relation between the active set of the perturbed problem and that of the original {\lp}. We find that for sufficiently small perturbations, these two active sets remain the same provided the original problem is nondegenerate.

\begin{theorem}
\label{thm-preserving_actv}
Assume~\eqref{asm-full_row_rank} holds and the original pair of~\eqref{eqv:originalPD} problems has a unique and nondegenerate primal solution $x^{*}$.
Then there exists a positive scalar $\hat{\lambda}=\hat{\lambda}(A,b,c,x^{*})$ such that the pair of perturbed problems~\eqref{eqv:perPD} with $0 \leq \|\lambda\| < \hat{\lambda}$ has a strictly complementary solution $ (x^{*}_{\lambda},y^{*}_{\lambda},s^{*}_{\lambda})$ with the same active and inactive sets as $x^{*}$, where $\|\cdot\|$ denotes the Euclidean norm.
\end{theorem}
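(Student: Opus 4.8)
The plan is to exploit the uniqueness and nondegeneracy of $x^{*}$ to identify a nondegenerate vertex of the primal (and a complementary vertex-solution pair for the dual), and then invoke a continuity/perturbation argument: the perturbed problem \eqref{eqv:perPD} is, via the substitution $p=x+\lambda$, $q=s+\lambda$, the standard-form pair \eqref{eqv:perPD_formPQ} with data $(A,b_\lambda,c_\lambda)$ that depends continuously (indeed affinely) on $\lambda$, so for $\|\lambda\|$ small its optimal basis can be shown to be the same as that of the original problem.

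\textbf{Step 1: Set up the optimal basis.} Since $x^{*}$ is the unique primal solution and nondegenerate, it is a vertex of $\{x : Ax=b,\ x\ge 0\}$ whose support $B=\Iactv$ (the inactive set) has $|B|=m$, $A_B$ is nonsingular, $x^*_B=A_B^{-1}b>0$ componentwise, and $\Actv=N=\{1,\dots,n\}\setminus B$ is the active set. Let $(y^{*},s^{*})$ be a corresponding dual solution; by LP duality and complementary slackness $s^{*}_B=0$, and I would argue (using uniqueness of $x^{*}$, which forces the reduced costs off the basis to be strictly positive — otherwise an alternative optimal vertex or edge would exist) that $s^{*}_N>0$, i.e.\ the primal-dual pair is strictly complementary with $y^{*}=A_B^{-T}c_B$ and $s^{*}_N=c_N-A_N^Ty^{*}>0$. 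This identifies $B$ as an optimal basis satisfying the \emph{strict} optimality conditions $A_B^{-1}b>0$ and $c_N-A_N^TA_B^{-T}c_B>0$.

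\textbf{Step 2: Perturb and conclude.} For the perturbed data, $b_\lambda=b+A\lambda$ and $c_\lambda=c+\lambda$ are continuous in $\lambda$ with $b_0=b$, $c_0=c$. Define the candidate perturbed solution by keeping the same basis $B$: $p^*_\lambda(B)=(A_B^{-1}b_\lambda,\,0)$, $y^*_\lambda=A_B^{-T}(c_\lambda)_B$, $q^*_\lambda(B)=((c_\lambda)_N-A_N^Ty^*_\lambda,\,0)$, hence $x^*_\lambda=p^*_\lambda-\lambda$, $s^*_\lambda=q^*_\lambda-\lambda$. By continuity, since the strict inequalities $A_B^{-1}b>0$ and $c_N-A_N^TA_B^{-T}c_B>0$ hold at $\lambda=0$, there is $\hat\lambda>0$ such that for all $\|\lambda\|<\hat\lambda$ we still have $p^*_\lambda(B)_B>0$ and $q^*_\lambda(B)_N>0$; then $(p^*_\lambda,y^*_\lambda,q^*_\lambda)$ satisfies \eqref{eqv:kkt_perturbed_form2} with $P_BQ_B=0$ automatically (since $q^*_{\lambda,B}=0$) and $P_NQ_N=0$ (since $p^*_{\lambda,N}=0$), so by the equivalence noted after \eqref{eqv:kkt_perturbed_form2} the corresponding $(x^*_\lambda,y^*_\lambda,s^*_\lambda)$ solves \eqref{eqv:perPD}. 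Moreover it is strictly complementary, and its active set (indices with $x^*_{\lambda,i}=-\lambda_i$, equivalently $p^*_{\lambda,i}=0$) is exactly $N=\Actv$, while its inactive set is $B=\Iactv$, matching $x^*$ as claimed. One should also record that this perturbed solution is unique for $\|\lambda\|<\hat\lambda$, which follows from strict complementarity of the basis $B$ together with a (possibly further shrunk) continuity argument ruling out a second optimal basis — any competing basis would have to be optimal for $\lambda=0$ too, contradicting uniqueness of $x^*$.

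\textbf{Main obstacle.} The delicate point is Step 1, specifically extracting \emph{strict} complementarity ($s^{*}_N>0$) from \emph{uniqueness} of the primal solution; degeneracy of the dual is not excluded by the hypotheses, so I must argue carefully that a zero reduced cost $s^{*}_j=0$ for some $j\in N$ would permit pivoting $j$ into the basis and produce either another optimal vertex or an optimal edge, contradicting uniqueness of $x^{*}$. Making that pivot argument rigorous (handling the case where the pivot is blocked immediately, etc.) — together with the bookkeeping that $\|\lambda\|<\hat\lambda$ simultaneously preserves all the relevant strict inequalities and suppresses alternative optimal bases — is where the real work lies; the rest is routine continuity of $A_B^{-1}b_\lambda$ and $c_\lambda-A^Ty^*_\lambda$ in $\lambda$.
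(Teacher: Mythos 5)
Your proposal is correct and takes essentially the same route as the paper: the basis-preserving candidate you define (with $\hat p_{\Iactv}=A_{\Iactv}^{-1}b_\lambda$, $\hat y=A_{\Iactv}^{-T}(c_\lambda)_{\Iactv}$, $\hat q_{\Actv}=(c_\lambda)_{\Actv}-A_{\Actv}^{T}\hat y$ and zeros elsewhere) is exactly the solution constructed in the paper's proof, which merely replaces your continuity argument by the explicit threshold $\hat\lambda=\min\left\{ [\, x^{*}_{\Iactv}\ s^{*}_{\Actv}\,] \right\}/\sigma_{\max}(A_{\Iactv}^{-1}A_{\Actv})$. Your flagged obstacle---deducing strict complementarity ($s^{*}_{\Actv}>0$) from uniqueness and primal nondegeneracy---is settled in the paper simply by citing a standard result (a unique nondegenerate primal solution yields a unique primal--dual nondegenerate, hence strictly complementary, solution), so no pivoting argument is needed, and the uniqueness of the perturbed solution you wish to record is not required for this statement (it is proved separately as Theorem~\ref{thm-perturbedProbHasUniqueSol}).
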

\begin{proof}
Since~\eqref{eqv:originalPD} has a unique and nondegenerate primal solution, it must have a unique primal-dual~nondegenerate solution $\optSol$~\cite[Theorem 4.5 (b)]{sierksma2001}, which must be strictly complementary and so $x^{*} + s^{*} > 0$. 
Thus, letting
\begin{equation}
\label{eqv:scp}
\Actv = \left\{ i \in \{ 1,\ldots,n \} \,\middle|\, x^{*}_{i} = 0 \right\} 
\quad \text{and} \quad 
\Iactv = \left\{ i \in \{ 1,\ldots,n \} \,\middle|\, s^{*}_{i} = 0 \right\},
\end{equation}
the {\kkt} conditions for~\eqref{eqv:originalPD} at $\optSol$---namely,~\eqref{eqv:kkt_perturbed} with $\lambda = 0$---become
\begin{subequations}
\label{eqv:origin_opt_AxbAtys_partiton}
\begin{align}
    x^{*}_{\Actv} = 0, \quad x^{*}_{\Iactv} > 0 \quad \text{and} \quad s^{*}_{\Iactv} = 0, \quad s^{*}_{\Actv} > 0,  \\
    A_{\Iactv} x^{*}_{\Iactv} =  b, \quad  A^{T}_{\Iactv}y^{*}   =  c_{\Iactv}, \quad A^{T}_{\Actv}y^{*} + s_{\Actv}^{*}   = c_{\Actv},
\end{align}
\end{subequations}
where $A = \left[ A_{\Iactv}   \,\,  A_{\Actv} \right]$, $ (x^{*})^{T} = [ (x^{*}_{\Actv})^{T} \,\, (x^{*}_{\Iactv})^{T}]$ and $(s^{*})^{T} = [ \, (s^{*}_{\Actv})^{T} \,\, (s^{*}_\Iactv)^{T} \,]$.
As the~\eqref{eqv:originalPD} solution is also nondegenerate, we must have $|\Iactv| = m$ and $rank(A_{\Iactv}) = m$, namely, $A_{\Iactv}$ is nonsingular.
We work with the equivalent form~\eqref{eqv:perPD_formPQ} of problems~\eqref{eqv:perPD}, and construct a solution $\optSolp$ of~\eqref{eqv:perPD_formPQ} such that 
$\hat{p} + \hat{q} > 0$,  $\hat{p}_{\Actv} = 0$ and $\hat{q}_{\Iactv} = 0$, namely,
\begin{subequations}
\label{eqv:uniqueSolpyq}
\begin{align}
  \hat{p}_{\Actv}  =  0,  \quad \hat{p}_{\Iactv}  = x^{*}_{\Iactv} + \lambda_{\Iactv} + A^{-1}_{\Iactv}A_{\Actv}\lambda_{\Actv}, \label{eqv:p_uniq_nondegen_primal}\\
  \hat{y}  = y^{*}+ (A^{T}_{\Iactv})^{-1}\lambda_{\Iactv}, \quad \hat{q}_{\Iactv} = 0,  \quad \hat{q}_{\Actv}  =  s^{*}_{\Actv} + \lambda _{\Actv} - (A_{\Iactv}^{-1}A_{\Actv})^{T}\lambda_{\Iactv}. \label{eqv:uniq_nondegen_dual}
\end{align}
\end{subequations}
 
Using~\eqref{eqv:origin_opt_AxbAtys_partiton}, it is straightforward to show that $\optSolp$ in~\eqref{eqv:uniqueSolpyq} satisfies all linear and nonlinear equality constraints in the {\kkt} conditions~\eqref{eqv:kkt_perturbed_form2}. It remains to prove that $\hat{p}_{\Iactv} > 0$ and $\hat{q}_{\Actv} > 0 $.
Let $\sigma_{\max}$ be the largest singular value of $A^{-1}_{\Iactv}A_{\Actv}$, and define a positive scalar $\hat{\lambda}$ as
\[
\hat{\lambda} =
\frac{\vmin{x^{*}_{\Iactv}}{s^{*}_{\Actv}}}{\sigma_{\max}},
\]
where $\vmin{x^{*}_{\Iactv}}{s^{*}_{\Actv}}$ is a scalar that denotes the smallest element of $x^{*}_{\Iactv}$ and $s^{*}_{\Actv}$.
From $\lambda \geq 0$ and from norm properties, we have that
\[
\hat{p}_{\Iactv} 
\geq x^{*}_{\Iactv} - \| A^{-1}_{\Iactv}A_{\Actv}\lambda_{\Actv} \| e_{\Iactv}
\geq x^{*}_{\Iactv} - \| A^{-1}_{\Iactv}A_{\Actv}\| \cdot \|\lambda_{\Actv} \| e_{\Iactv}
\geq x^{*}_{\Iactv} - \| A^{-1}_{\Iactv}A_{\Actv}\| \cdot \|\lambda \| e_{\Iactv}
\]
and
\[
\displaystyle\begin{array}{lcl}
\hat{q}_{\Actv} 
\geq s^{*}_{\Actv} - \| \left( A^{-1}_{\Iactv} A_{\Actv}\right)^{T} \lambda_{\Iactv} \| e_{\Actv}
&\geq& s^{*}_{\Actv} - \| \left( A^{-1}_{\Iactv} A_{\Actv}\right)^{T}\| \cdot \| \lambda_{\Iactv} \| e_{\Actv}\\[1ex]
&\geq& s^{*}_{\Actv} - \| \left( A^{-1}_{\Iactv} A_{\Actv}\right)^{T}\| \cdot \| \lambda \| e_{\Actv}.
\end{array}
\]
Using matrix norm properties, we obtain that $\left\| A^{-1}_{\Iactv}A_{\Actv} \right\| = \left\| (A^{-1}_{\Iactv}A_{\Actv})^{T} \right\| = \sigma_{\max}$.
This and $0 < \|\lambda\| < \hat{\lambda}$ now imply
\[
\hat{p}_{\Iactv} 
> x^{*}_{\Iactv} - \sigma_{\max}\hat{\lambda}  e_{\Iactv}
\geq x^{*}_{\Iactv} - \vmin{x^{*}_{\Iactv}}{s^{*}_{\Actv}} e_{\Iactv} \geq 0,
\]
and
\[
\hat{q}_{\Actv} 
> s^{*}_{\Actv} - \sigma_{\max} \hat{\lambda} e_{\Actv}
\geq s^{*}_{\Actv} - \vmin{x^{*}_{\Iactv}}{s^{*}_{\Actv}} e_{\Actv}
\geq 0,
\]
where we also use the definition of $\hat{\lambda}$.
\qed \end{proof}

\paragraph*{Remarks on the assumptions and proof of Theorem~\ref{thm-preserving_actv}.} \hfill

$\bullet$ An equivalent non-degeneracy assumption that would be sufficient in this theorem is to require that all~\eqref{eqv:originalPD} solutions are primal-dual nondegenerate
\cite[Section 5]{guler1993}.

$\bullet$ We have assumed in this theorem that~\eqref{eqv:originalPD} is primal-dual nondegenerate and has a unique solution, which guarantees $A_{\Iactv}$ is nonsingular.  
Considering the general case when $\optSol$ is a possibly non-unique strictly complementary solution, to construct the desired solution $\optSolp$ of~\eqref{eqv:perPD_formPQ} with the same active set and strictly complementary partition, one needs to satisfy exactly primal-dual feasibility requirements such as  
\begin{equation}
\label{eqv:tmp1}
A_{\Iactv} \hat{p}_{\Iactv} = b+A\lambda = b+A_{\Actv}\lambda_{\Actv} + A_{\Iactv}\lambda_{\Iactv}.
\end{equation}
Clearly, one can only guarantee~\eqref{eqv:tmp1} to be consistent for $\lambda > 0$ if $A_{\Actv}\lambda_{\Actv}$ belongs to the range space of $ A_{\Iactv}$. 
Alternatively, one could consider satisfying~\eqref{eqv:tmp1} only approximately and look for a solution $\hat{p}$ of the form 
\begin{equation}
\label{eqv:tmp2}
\hat{p}_{\Actv} = 0 \quad \text{and} \quad \hat{p}_{\Iactv} = x_{\Iactv}^{*} + \lambda_{\Iactv} + \hat{u},
\end{equation} 
where $\hat{u}$ is the least-squares/minimal norm solution of $A_{\Iactv} u  = A_{\Actv} \lambda_{\Actv} $. For instance in the case when $| \Iactv | \leq m$, we have $\|  A_{\Iactv} \hat{u}  - A_{\Actv} \lambda_{\Actv}  \| \leq \| A_{\Actv} \lambda_{\Actv} \|$. The right-hand side of the latter inequality goes to zero as $\lambda  \to  0$ and so primal feasibility can be approximately achieved. It can also be shown that $\hat{p}_{\Iactv}$ in \eqref{eqv:tmp2} stays positive. \hfill$\Box$

Note that the nondegeneracy assumption in Theorem~\ref{thm-preserving_actv} is not required in the results of  the next section or in our implementations and numerical experiments. 
Thus this theorem and its assumptions do not restrict our algorithmic or even main theoretical approach of predicting the optimal active set of the~\eqref{eqv:originalPD} problem by solving a perturbed~\eqref{eqv:perPD} problem. 
%%%%%%%%%%% Section: Using perturbations to predict the optimal active set %%%%%%%%%%%%%%

\section{Using perturbations to predict the original optimal active set}
\label{sec-predict}
%\resetcounters

Recalling our main aim, we now present results for predicting the optimal active set of~\eqref{eqv:originalPD}. The idea is to solve the perturbed problem instead of the original one using {\ipms}, but attempt to predict the active set for the original problem during the run of the algorithm. Without assuming that the original and perturbed problems have the same optimal active set, we prove that under certain conditions and given proper perturbations, when the duality gap of~\eqref{eqv:perPD} is sufficiently small, the predicted (strictly) active set for~\eqref{eqv:originalPD} coincides with the actual optimal (strictly) active set of~\eqref{eqv:originalPD}~(Theorems~\ref{thm-predicted_A_equals_Actual_A},~\ref{thm-predicted_A_plus_equals_Actual_A_plus}).

\subsection{Some useful results}
We first derive a bound on the distance between the original optimal solution set and strictly feasible points of the perturbed problems.

\begin{lemma}[An error bound for~\eqref{eqv:originalPD}]
\label{lem-bound_x_s_strict_fea}
Let $\pntp \in \sfsp$, where $\sfsp$ is defined in~\eqref{eqv:strictlyFeasibleSet_per}, and $\lambda \geq 0$.
Then there exists a~\eqref{eqv:originalPD} solution $\optSol$ such that
\begin{equation}
\label{eqv:iter_error_bound}
%\tag{ER}
	\| x - x^{*}\| \leq \tau_{p} \left( r( x, s ) + w( x, s )  \right) \quad \text{and} \quad \| s - s^{*}\| \leq \tau_{d} \left( r( x, s ) + w( x, s )  \right),
\end{equation}
where  $\tau_{p} > 0$ and $\tau_d > 0 $ are problem-dependent constants independent of $\pntp$ and $\optSol$, and 
\begin{equation}
\label{eqv:error_bounds_feasible_x_s}
	r( x, s )= \|\cmin{ x }{ s }\|
\quad \text{and} \quad
w( x, s ) = \|(-x, -s, x^{T} s )_{+}\|,
\end{equation}
and where $\cmin{x}{s} = \left(\, \min(x_{i},s_{i}) \,\right)_{ i = 1,\ldots, n }$ and $(x)_{+} = \left(\, \max(x_{i},0) \,\right)_{ i = 1,\ldots, n }$.
\end{lemma}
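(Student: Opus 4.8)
The plan is to reduce the claimed bound to a known error bound for linear programming in the spirit of the Hoffman-type / Mangasarian--Shiau results, after observing that any point $\pntp \in \sfsp$ is feasible for the two linear systems $Ax=b$ and $A^Ty+s=c$, so the only `residual' to control is the complementarity-type slack. First I would recall the standard error bound for \eqref{eqv:originalPD}: there are constants $\tau_p,\tau_d>0$, depending only on $A,b,c$, such that for any $\pntp$ (not necessarily feasible) there is an optimal solution $\optSol$ with $\|x-x^*\| \le \tau_p\,\rho\pntp$ and $\|s-s^*\|\le \tau_d\,\rho\pntp$, where $\rho\pntp$ aggregates the violations of primal feasibility, dual feasibility, complementarity and nonnegativity --- precisely a quantity of the form $\|(Ax-b,\,A^Ty+s-c,\,\min(x,s),\,(-x)_+,\,(-s)_+,\,(x^Ts)_+)\|$ (see, e.g., Mangasarian, or Wright's book for the LP specialisation). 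This is the one external ingredient I would cite; everything else is bookkeeping.

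Next I would evaluate $\rho\pntp$ at a point of $\sfsp$. By definition of $\sfsp$ in \eqref{eqv:strictlyFeasibleSet_per}, the primal and dual feasibility residuals $Ax-b$ and $A^Ty+s-c$ vanish identically, so those terms drop out. What survives is exactly $r(x,s)=\|\cmin{x}{s}\|$ together with the nonnegativity/complementarity violations $\|(-x,-s,x^Ts)_+\| = w(x,s)$. Hence $\rho\pntp \le r(x,s)+w(x,s)$ (up to an absorbable constant from combining the blocks of the norm, which I fold into $\tau_p,\tau_d$), which gives \eqref{eqv:iter_error_bound} with the right-hand side as stated. Note the perturbation vector $\lambda\ge 0$ plays no role in the inequality itself --- it enters only through the hypothesis that $\pntp\in\sfsp$, which is what makes the feasibility residuals zero --- so the statement is really about unperturbed quantities, and the $\lambda\ge 0$ assumption is there merely to make $\sfsp$ a sensible object to evaluate at.

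The one genuine subtlety --- and the step I expect to be the main obstacle to state cleanly --- is the justification and exact form of the underlying LP error bound: one must be careful that the bound is \emph{global} (valid for all $\pntp$, with constants independent of the point) rather than merely local near the solution set, and that it is formulated with the same `min' form of the complementarity residual $\min(x,s)$ rather than the bilinear form $x^Ts$ or $XSe$. The global min-form bound does hold for LP (the solution set is polyhedral, so Hoffman's lemma applies once the complementarity condition is written as the polyhedral condition $\min(x,s)\le 0$ in conjunction with $x,s\ge 0$, $Ax=b$, $A^Ty+s=c$, $c^Tx\le b^Ty$); assembling these facts into a single citation-backed statement, and checking that the feasibility of $\sfsp$ points lets us discard the $Ax-b$ and $A^Ty+s-c$ blocks, is where the care is needed. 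Once that is in place, the lemma follows by direct substitution.
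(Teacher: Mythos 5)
Your proposal is correct in outline and finishes with the same one-line specialisation the paper uses (for points of $\sfsp$ the residuals $Ax-b$ and $A^{T}y+s-c$ vanish, leaving only $r(x,s)+w(x,s)$), but it obtains the key ingredient by a genuinely different route. The paper does not simply cite an LP error bound: its Appendix A constructs one, by splitting $y=y^{+}-y^{-}$, recasting the KKT system as a monotone (skew-symmetric, hence positive semidefinite) linear complementarity problem, invoking the Mangasarian--Ren global error bound for monotone LCPs, and translating back --- which is why the dual constant there comes out as $\tau_{d}=2\tau\|A\|$, since $s$ is not an LCP variable and must be recovered through $s=c-A^{T}y$. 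Your alternative --- Hoffman's lemma applied to the polyhedral description of the primal-dual optimal set $\{Ax=b,\ A^{T}y+s=c,\ x\geq 0,\ s\geq 0,\ c^{T}x-b^{T}y\leq 0\}$ (nonempty by the implicit solvability assumption both arguments share) --- is legitimate and, for this lemma, simpler: at a point of $\sfsp$ the gap residual equals $(x^{T}s)_{+}$, so Hoffman bounds the distance in the full $(x,y,s)$ space by $\tau\,w(x,s)$, with one constant serving both the primal and dual estimates and with no need for the $r$ term at all. One caveat: your justification of the ``min-form'' bound is not right as stated --- $\min(x,s)\leq 0$ is not a linear constraint, so Hoffman does not directly yield a residual containing $\min(x,s)$, and for infeasible points $(x^{T}s)_{+}$ and $(c^{T}x-b^{T}y)_{+}$ differ by point-dependent amounts; the fully general natural-residual bound is precisely the Mangasarian--Ren-type result the paper reaches via the LCP reformulation. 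Since the lemma only concerns exactly primal-dual feasible points, this imprecision does not break your argument, but the sentence claiming the global min-form bound follows from polyhedrality should be corrected or dropped.
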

See Appendix~\ref{apd-proof_lemma_4_1} for a proof of this lemma.

\begin{lemma}{\normalfont\textbf{\cite[Lemma 5.13]{wright}}}
\label{lem-boundXandSAccording2Partition}
For any $\pntp \in \sfsp$, where $\sfsp$ is defined in~\eqref{eqv:strictlyFeasibleSet_per}, we have
\begin{equation}
\label{eqv:xi_si_sep}
0<x_{i}+\lambda_{i} \leq \frac{\mup}{C_{1}} \quad (i \in \Actv_{\lambda})
\quad \text{and} \quad
0<s_{i}+\lambda_{i} \leq \frac{\mup}{C_{1}} \quad (i \in \Iactv_{\lambda}), 
\end{equation}
where 
\begin{equation}
\label{eqv:mu}
\mup = \frac{(x+\lambda)^{T}(s+\lambda)}{n}
\end{equation}
and
\begin{equation}
\label{eqv_C1}
	C_{1} = \frac{\epsilon(A,b_{\lambda},c_{\lambda})}{n}
\end{equation}
with
\begin{equation}
\label{eqv:eps_hat}
\begin{split}
   & \epsilon(A,b_{\lambda},c_{\lambda}) \\ 
= & \min \left( \,\, \min_{i\in \Iactv_{\lambda}} \sup_{x^{*}_{\lambda} \in \sspp} \left\{ \, (x^{*}_{\lambda})_{i}  + \lambda_{i}\, \right\}, \,  \min_{i\in \Actv_{\lambda}}\sup_{(y^{*}_{\lambda},s^{*}_{\lambda}) \in \ssdp} \left\{ \,(s^{*}_{\lambda})_{i} +\lambda_{i} \,\right\}  \,\,\right) > 0,
\end{split}
\end{equation}
and
$\sspp$ and $\ssdp$ are the primal and dual solution sets of~\eqref{eqv:perPD} respectively,
and where $(\Actv_{\lambda}, \Iactv_{\lambda})$ is the strictly complementary active and inactive partition of the solution set of~\eqref{eqv:perPD}.
\end{lemma}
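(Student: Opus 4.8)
The statement is precisely Wright's Lemma 5.13 (\cite[Lemma 5.13]{wright}) specialised to the perturbed linear program, so the plan is to reduce to the equivalent standard-form pair \eqref{eqv:perPD_formPQ}. Setting $p=x+\lambda$, $q=s+\lambda$, with data $c_{\lambda}$, $b_{\lambda}$, a point $\pntp\in\sfsp$ (cf.~\eqref{eqv:strictlyFeasibleSet_per}) becomes a strictly feasible triple $(p,y,q)$ with $p>0$, $q>0$, $Ap=b_{\lambda}$, $A^{T}y+q=c_{\lambda}$. Since $\mup=p^{T}q/n$ by \eqref{eqv:mu} and $C_{1}=\epsilon(A,b_{\lambda},c_{\lambda})/n$ by \eqref{eqv_C1}, the claimed bounds are equivalent to $p_{i}\le p^{T}q/\epsilon(A,b_{\lambda},c_{\lambda})$ for $i\in\Actv_{\lambda}$ and $q_{i}\le p^{T}q/\epsilon(A,b_{\lambda},c_{\lambda})$ for $i\in\Iactv_{\lambda}$, recalling that $(p^{*}_{\lambda})_{i}=(x^{*}_{\lambda})_{i}+\lambda_{i}$ and $(q^{*}_{\lambda})_{i}=(s^{*}_{\lambda})_{i}+\lambda_{i}$.

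First I would record the standard orthogonality identity. For any $p^{*}_{\lambda}\in\sspp$ and any $(y^{*}_{\lambda},q^{*}_{\lambda})\in\ssdp$ one has $A(p-p^{*}_{\lambda})=0$ and $q-q^{*}_{\lambda}=-A^{T}(y-y^{*}_{\lambda})$, hence $(p-p^{*}_{\lambda})^{T}(q-q^{*}_{\lambda})=0$. Expanding this and using the complementarity relation $(p^{*}_{\lambda})^{T}q^{*}_{\lambda}=0$ from \eqref{eqv:kkt_perturbed_form2} yields
\[
p^{T}q \;=\; \sum_{j=1}^{n} p_{j}(q^{*}_{\lambda})_{j} \;+\; \sum_{j=1}^{n} (p^{*}_{\lambda})_{j} q_{j},
\]
a sum of nonnegative terms because $p,q,p^{*}_{\lambda},q^{*}_{\lambda}\ge 0$.

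Next, fix $i\in\Actv_{\lambda}$. Dropping every term on the right except $p_{i}(q^{*}_{\lambda})_{i}$ gives $p_{i}(q^{*}_{\lambda})_{i}\le p^{T}q$ for every $(y^{*}_{\lambda},q^{*}_{\lambda})\in\ssdp$; dividing and then taking the supremum over $\ssdp$ yields $p_{i}\le p^{T}q/\sup_{(y^{*}_{\lambda},s^{*}_{\lambda})\in\ssdp}\{(s^{*}_{\lambda})_{i}+\lambda_{i}\}$, and by \eqref{eqv:eps_hat} this supremum is at least $\epsilon(A,b_{\lambda},c_{\lambda})$, so $p_{i}\le p^{T}q/\epsilon(A,b_{\lambda},c_{\lambda})=\mup/C_{1}$. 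The case $i\in\Iactv_{\lambda}$ is symmetric: keep only the term $(p^{*}_{\lambda})_{i}q_{i}$, obtain $q_{i}\le p^{T}q/(p^{*}_{\lambda})_{i}$ for every $p^{*}_{\lambda}\in\sspp$, take the supremum over $\sspp$, and invoke \eqref{eqv:eps_hat} to get $q_{i}\le p^{T}q/\epsilon(A,b_{\lambda},c_{\lambda})=\mup/C_{1}$. The strict positivity $0<x_{i}+\lambda_{i}=p_{i}$ (resp. $0<s_{i}+\lambda_{i}=q_{i}$) is immediate from strict feasibility.

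Finally, I would justify that the objects involved are well defined and that $\epsilon(A,b_{\lambda},c_{\lambda})>0$. Since $\sfsp\neq\emptyset$ and $A$ has full row rank, \eqref{eqv:perPD_formPQ} has strictly feasible primal and dual points, so by LP strong duality $\sspp$ and $\ssdp$ are nonempty and, by the Goldman--Tucker theorem, there is a strictly complementary solution; this makes $(\Actv_{\lambda},\Iactv_{\lambda})$ well defined, and for each $i\in\Iactv_{\lambda}$ (resp. $i\in\Actv_{\lambda}$) some primal (resp. dual) solution has $(p^{*}_{\lambda})_{i}>0$ (resp. $(q^{*}_{\lambda})_{i}>0$), so each supremum in \eqref{eqv:eps_hat} is positive and, being a minimum over finitely many indices, $\epsilon(A,b_{\lambda},c_{\lambda})>0$. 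I expect the only mildly delicate point to be this passage to the supremum inside \eqref{eqv:eps_hat}: the inequality $p_{i}(q^{*}_{\lambda})_{i}\le p^{T}q$ must first be secured for an \emph{arbitrary} solution before dividing, so that enlarging the denominator to its supremum is legitimate; the rest is bookkeeping with the change of variables $p=x+\lambda$, $q=s+\lambda$.
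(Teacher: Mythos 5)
Your proof is correct and follows essentially the same route as the paper: both reduce to the equivalent standard-form pair \eqref{eqv:perPD_formPQ} via $p=x+\lambda$, $q=s+\lambda$, after which the paper simply cites \cite[Lemma 5.13]{wright} (remarking that only strict feasibility is needed for the stated upper bounds), whereas you reprove the needed portion from scratch using the orthogonality identity $(p-p^{*}_{\lambda})^{T}(q-q^{*}_{\lambda})=0$ together with complementarity and the definition \eqref{eqv:eps_hat}. Your self-contained derivation substantiates the paper's remark that no centrality condition is required, and your justification that $\epsilon(A,b_{\lambda},c_{\lambda})>0$ (via strict primal-dual feasibility and the Goldman--Tucker theorem) is consistent with, and slightly more detailed than, the paper's one-line observation.
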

\begin{proof} Firstly, \eqref{eqv:eps_hat} is well-defined: when the feasible set of~\eqref{eqv:originalPD} is  nonempty, that of~\eqref{eqv:perPD} is also nonempty, and so $\epsilon(A,b_{\lambda},c_{\lambda}) > 0$. To prove the Lemma,
apply~\cite[Lemma 5.13]{wright} to~\eqref{eqv:perPD_formPQ} and recall $x = p-\lambda$ and $s = q - \lambda$.
Note that Lemma 5.13 is a more complex result that also assumes loose proximity to the problem central path, but only strict feasibility is required to prove the required inequalities in~\eqref{eqv:xi_si_sep}. 
\qed \end{proof}

\begin{lemma}
\label{lem-boundXandXstar}
Let $\pntp \in \sfsp$, where $\sfsp$ is defined in~\eqref{eqv:strictlyFeasibleSet_per} for some $\lambda \geq 0$. Then there exists a~\eqref{eqv:originalPD} solution $\optSol$ and problem-dependent constants $\tau_{p}$ and $\tau_{d}$ that are independent of $\pntp$ and $\optSol$, such that
%\begin{equation}
%\label{eqv:bounds_x_xstar_on_mu_lambda}
%\resizebox{.9\hsize}{!}{$
%	\|x-x^{*}\| < \tau_{p} \left( C_{2} \mup + 4 \|\lambda\| \max\left(\|\lambda\|,1\right) \right)
%	\text{ and }
%	\|s-s^{*}\| < \tau_{d} \left( C_{2} \mup + 4 \|\lambda\| \max\left(\|\lambda\|,1\right) \right)$},
%\end{equation}
\begin{equation}
\label{eqv:bounds_x_xstar_on_mu_lambda}
\displaystyle\begin{array}{c}
	\|x-x^{*}\| < \tau_{p} \left( C_{2} \mup + 4 \|\lambda\| \max\left(\|\lambda\|,1\right) \right)\\[1ex]
	\text{ and }\\[1ex]
	\|s-s^{*}\| < \tau_{d} \left( C_{2} \mup + 4 \|\lambda\|
          \max\left(\|\lambda\|,1\right) \right),
\end{array}
\end{equation}
where  
\begin{equation}
\label{eqv:C2}
	C_{2}  = \frac{n\sqrt{n}}{\epsilon(A,b_{\lambda},c_{\lambda})} + n,
\end{equation} 
$\epsilon(A,b_{\lambda},c_{\lambda})$ is defined in~\eqref{eqv:eps_hat} and $\mup$ in~\eqref{eqv:mu}.

\end{lemma}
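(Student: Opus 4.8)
The plan is to take the error bound of Lemma~\ref{lem-bound_x_s_strict_fea} and replace its right-hand side by sharper estimates that expose the dependence on $\mup$ and $\|\lambda\|$. Since $\pntp \in \sfsp$, Lemma~\ref{lem-bound_x_s_strict_fea} already supplies a~\eqref{eqv:originalPD} solution $\optSol$ and problem-dependent constants $\tau_{p},\tau_{d}>0$ (independent of $\pntp$ and $\optSol$) with $\|x-x^{*}\| \leq \tau_{p}\left(r(x,s)+w(x,s)\right)$ and $\|s-s^{*}\| \leq \tau_{d}\left(r(x,s)+w(x,s)\right)$, where $r(x,s)=\|\cmin{x}{s}\|$ and $w(x,s)=\|(-x,-s,x^{T}s)_{+}\|$. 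So it is enough to show $r(x,s)+w(x,s) \leq C_{2}\mup + 4\|\lambda\|\max(\|\lambda\|,1)$ with $C_{2}$ as in~\eqref{eqv:C2}, after which~\eqref{eqv:bounds_x_xstar_on_mu_lambda} follows with the same $\optSol$, $\tau_{p}$, $\tau_{d}$.

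To bound $r(x,s)$, I would work with the strictly complementary partition $(\Actv_{\lambda},\Iactv_{\lambda})$ of the perturbed solution set and invoke Lemma~\ref{lem-boundXandSAccording2Partition}. For $i\in\Actv_{\lambda}$, strict feasibility gives $x_{i}>-\lambda_{i}$ and $s_{i}>-\lambda_{i}$, while $0<x_{i}+\lambda_{i}\leq\mup/C_{1}$ forces $\cmin{x_i}{s_i}\leq x_{i}\leq\mup/C_{1}$; symmetrically, for $i\in\Iactv_{\lambda}$ one uses $0<s_{i}+\lambda_{i}\leq\mup/C_{1}$. In either case $-\lambda_{i}\leq\cmin{x_i}{s_i}\leq\mup/C_{1}$, hence $|\cmin{x_i}{s_i}|\leq\mup/C_{1}+\lambda_{i}$ for every $i\in\{1,\dots,n\}$. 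Taking Euclidean norms and substituting $C_{1}=\epsilon(A,b_{\lambda},c_{\lambda})/n$ from~\eqref{eqv_C1} gives $r(x,s)\leq\sqrt{n}\,\mup/C_{1}+\|\lambda\|=\frac{n\sqrt{n}}{\epsilon(A,b_{\lambda},c_{\lambda})}\mup+\|\lambda\|$.

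To bound $w(x,s)$, strict feasibility ($x+\lambda>0$, $s+\lambda>0$) yields $(-x)_{+}\leq\lambda$ and $(-s)_{+}\leq\lambda$ componentwise, so $\|(-x)_{+}\|\leq\|\lambda\|$ and $\|(-s)_{+}\|\leq\|\lambda\|$. For the cross term, set $p=x+\lambda>0$ and $q=s+\lambda>0$; then $x^{T}s=p^{T}q-\lambda^{T}(p+q)+\|\lambda\|^{2}\leq n\mup+\|\lambda\|^{2}$, using $p^{T}q=n\mup$ from~\eqref{eqv:mu} and $\lambda^{T}(p+q)\geq0$. Since this upper bound is nonnegative, $(x^{T}s)_{+}\leq n\mup+\|\lambda\|^{2}$, and therefore $w(x,s)\leq\|(-x)_{+}\|+\|(-s)_{+}\|+(x^{T}s)_{+}\leq 2\|\lambda\|+n\mup+\|\lambda\|^{2}$.

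Adding the two estimates gives $r(x,s)+w(x,s)\leq\left(\frac{n\sqrt{n}}{\epsilon(A,b_{\lambda},c_{\lambda})}+n\right)\mup+3\|\lambda\|+\|\lambda\|^{2}=C_{2}\mup+3\|\lambda\|+\|\lambda\|^{2}$, and since $\|\lambda\|\leq\|\lambda\|\max(\|\lambda\|,1)$ and $\|\lambda\|^{2}\leq\|\lambda\|\max(\|\lambda\|,1)$, the last three terms are at most $4\|\lambda\|\max(\|\lambda\|,1)$ (the strict inequality in~\eqref{eqv:bounds_x_xstar_on_mu_lambda} being inherited from the open bounds $x_{i}>-\lambda_{i}$, $s_{i}>-\lambda_{i}$). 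Substituting into Lemma~\ref{lem-bound_x_s_strict_fea} then yields~\eqref{eqv:bounds_x_xstar_on_mu_lambda}. The main thing to watch is the bookkeeping of how the perturbation $\lambda$ enters: one must confirm that relaxing the nonnegativity constraints does not damage the Wright-type separation bound — which is fine, since Lemma~\ref{lem-boundXandSAccording2Partition} is obtained by applying \cite[Lemma 5.13]{wright} to the genuinely nonnegatively-constrained equivalent form~\eqref{eqv:perPD_formPQ} — and that the near-complementarity quantity $x^{T}s$ stays controlled by $\mup$ up to the harmless additive term $\|\lambda\|^{2}$.
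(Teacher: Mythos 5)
Your proposal is correct and follows essentially the same route as the paper: invoke the error bound of Lemma~\ref{lem-bound_x_s_strict_fea}, bound $w(x,s)$ via strict feasibility and the identity $x^{T}s = (x+\lambda)^{T}(s+\lambda) - \lambda^{T}(x+\lambda) - \lambda^{T}(s+\lambda) + \|\lambda\|^{2} \leq n\mup + \|\lambda\|^{2}$, bound $r(x,s)$ by $\sqrt{n}\,\mup/C_{1} + \|\lambda\|$ using the partition estimates of Lemma~\ref{lem-boundXandSAccording2Partition}, and absorb $3\|\lambda\|+\|\lambda\|^{2}$ into $4\|\lambda\|\max(\|\lambda\|,1)$. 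The only cosmetic difference is that the paper obtains the $r(x,s)$ bound by writing $\cmin{x}{s} = \cmin{x+\lambda}{s+\lambda} - \lambda$ and applying the triangle inequality, whereas you squeeze each component between $-\lambda_{i}$ and $\mup/C_{1}$; both yield the same estimate.
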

\begin{proof}
Since $x+\lambda > 0$ and $s+\lambda > 0$, we have $ -x < \lambda $ and $-s < \lambda$, 
which implies
\begin{equation}
\label{eqv:bounds_x_plus_s_plus}
0 \leq (-x)_{+} < \lambda \quad \text{and} \quad 0 \leq (-s)_{+} < \lambda.
\end{equation}
Using~\eqref{eqv:mu}, $\lambda \geq 0$ and $(x+\lambda,s+\lambda) \geq 0$, we have
\begin{equation}
\label{eqv:bounds_xts_plus}
x^{T}s =  n \mup + \lambda^{T}\lambda - \lambda^{T} (x + \lambda) - \lambda^{T}  (s + \lambda) \leq  n \mup + \| \lambda \|^{2}. 
\end{equation}

From~\eqref{eqv:error_bounds_feasible_x_s},~\eqref{eqv:bounds_x_plus_s_plus} and~\eqref{eqv:bounds_xts_plus}, we obtain
\begin{equation}
\label{eqv:bound_w_mu_lambda}
	w(x,s) \leq \|(-x)_{+}\| + \|(-s)_{+}\| + ( x^{T}s)_{+} \leq n \mup + 2\|\lambda\| + \|\lambda\|^{2}.
\end{equation}
It remains to find an upper bound for $r(x,s)$ in~\eqref{eqv:error_bounds_feasible_x_s}.
If $i \in \Actv_{\lambda}$, from~\eqref{eqv:xi_si_sep} we have 
$
	\min \left( x_{i} + \lambda_{i} , s_{i} + \lambda_{i} \right) \leq x_{i}+\lambda_{i} \leq  \frac{\mup}{C_{1}}.
$
Similarly, we also have $\min \left( x_{i} + \lambda_{i} , s_{i} + \lambda_{i} \right) \leq \frac{\mup}{C_{1}}$ for $i \in \Iactv_{\lambda}$.
Thus
$
0 < \cmin{x+\lambda}{ s + \lambda}                     
\leq \frac{\mup}{C_{1}} e,
$
and so from~\eqref{eqv:error_bounds_feasible_x_s},
%\begin{equation}
%\label{eqv:bound_r_mu_lambda}
%\resizebox{.895\hsize}{!}{$
\[ 
r(x,s)    = \| \cmin{x+\lambda}{s+\lambda} -\lambda \| 
	   \leq \| \cmin{x+\lambda}{s+\lambda} \| + \| \lambda \|
	     \leq \frac{\mup}{C_{1}}\sqrt{n} +  \|\lambda \|. 
\]
%$}
%\end{equation}
This, \eqref{eqv:iter_error_bound} and \eqref{eqv:bound_w_mu_lambda} now
provide the bound~\eqref{eqv:bounds_x_xstar_on_mu_lambda}.
% now follows from~\eqref{eqv:iter_error_bound}, ~\eqref{eqv:bound_w_mu_lambda} and~\eqref{eqv:bound_r_mu_lambda}.
\qed \end{proof}

\subsection{Predicting the original optimal active set  using perturbations}
%During the iterative process of solving the perturbed problem using an interior point framework, we try to predict the optimal active set for the original problem.
Assume $\optSol$ is a~\eqref{eqv:originalPD} solution. We denote by
$\Actv(x^{*})$ the optimal active set at $x^{*}$ and by $\Sactv (s^{*})$, the `strictly' active set at $s^{*}$, namely, 
\begin{equation}
\label{eqv:actv_strictly_actv_at_a_sol}
%\resizebox{.89\hsize}{!}{$
	\Actv(x^{*}) = \left\{   \, i \in \{1,\ldots,n\}\,|\, x^{*}_{i}  = 0 \right\}
	\quad\text{and}\quad
	\Sactv(s^{*}) = \left\{   \, i \in \{1,\ldots,n\}\,|\,
          s^{*}_{i} > 0 \right\}.
%$}
\end{equation}
Let
\begin{equation}
\label{eqv:predictedSets}
%\resizebox{.89\hsize}{!}{$
	\pacs (x)  = \left\{ i \in \{1,\ldots, n\} \, | \, x_{i}  <  C \right\}
	\quad \text{and} \quad 
	\psas  (s)= \left\{ i \in \{1,\ldots, n\} \, | \, s_{i} \geq C
        \right\},
%$}
\end{equation}
where $C>0$ is some constant threshold.  $\pacs(x)$ is considered as the predicted active set and $\psas(s)$, the  predicted `strictly'  active set at a primal-dual pair $\pntp$ for~\eqref{eqv:perPD}.
\begin{theorem}
\label{thm-bound_active_set}
Let $C > 0$ and fix the perturbation $\lambda$ such that
\begin{equation}
\label{eqv:thm_bound_lambda}
	 0 < \|\lambda\| < \min \left( 1, \frac{C}{8 \max(\tau_{p},\tau_{d}) }  \right),
\end{equation}
where $\tau_{p}$ and $\tau_{d}$ are the problem-dependent constants in~\eqref{eqv:bounds_x_xstar_on_mu_lambda}.
Let $\pntp \in \sfsp$ with $\mup$ sufficiently small, namely,
\begin{equation}
\label{eqv:mu_bounds_doubside}
	\mup < \frac{C}{2C_{2}\max(\tau_{p},\tau_{d})},
\end{equation}
where $\sfsp$ is defined in~\eqref{eqv:strictlyFeasibleSet_per}, $\mup$ in~\eqref{eqv:mu} and $C_{2} > 0$ in~\eqref{eqv:C2} is a problem-dependent constant when $\lambda$ is fixed. Then there exists a~\eqref{eqv:originalPD} solution $\optSol$ such that
\[
	\psas(s) \subseteq \Sactv(s^{*}) \subseteq \Actv(x^{*}) \subseteq \pacs(x).
\]
\end{theorem}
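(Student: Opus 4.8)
The plan is to derive all three inclusions from the error bound of Lemma~\ref{lem-boundXandXstar}, which provides a \eqref{eqv:originalPD} solution $\optSol$ together with the estimates $\|x-x^*\| < \tau_{p}\left(C_{2}\mup + 4\|\lambda\|\max(\|\lambda\|,1)\right)$ and $\|s-s^*\| < \tau_{d}\left(C_{2}\mup + 4\|\lambda\|\max(\|\lambda\|,1)\right)$. The middle inclusion $\Sactv(s^{*}) \subseteq \Actv(x^{*})$ is immediate from complementarity: if $s^{*}_{i} > 0$ then $x^{*}_{i}s^{*}_{i} = 0$ forces $x^{*}_{i} = 0$. So the substantive part is the two outer inclusions, and for these the first step is to convert the hypotheses \eqref{eqv:thm_bound_lambda} and \eqref{eqv:mu_bounds_doubside} into the single pair of bounds $\|x-x^*\| < C$ and $\|s-s^*\| < C$.

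To do this I would simplify the common factor in the error bound: since $\|\lambda\| < 1$ by \eqref{eqv:thm_bound_lambda}, we have $\max(\|\lambda\|,1) = 1$, so $C_{2}\mup + 4\|\lambda\|\max(\|\lambda\|,1) = C_{2}\mup + 4\|\lambda\|$. Now \eqref{eqv:mu_bounds_doubside} gives $C_{2}\mup < C/(2\max(\tau_{p},\tau_{d}))$ and \eqref{eqv:thm_bound_lambda} gives $4\|\lambda\| < C/(2\max(\tau_{p},\tau_{d}))$; adding these yields $C_{2}\mup + 4\|\lambda\| < C/\max(\tau_{p},\tau_{d})$. Multiplying through by $\tau_{p} \leq \max(\tau_{p},\tau_{d})$ (respectively $\tau_{d}$) and invoking Lemma~\ref{lem-boundXandXstar} gives $\|x-x^*\| < C$ and $\|s-s^*\| < C$. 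Since the Euclidean norm dominates each component's absolute value, this yields $|x_{i}-x^{*}_{i}| < C$ and $|s_{i}-s^{*}_{i}| < C$ for every $i \in \{1,\dots,n\}$.

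With these componentwise bounds the two outer inclusions are immediate. For $\psas(s) \subseteq \Sactv(s^{*})$: if $s_{i} \geq C$ then $s^{*}_{i} > s_{i} - C \geq 0$, so $s^{*}_{i} > 0$. For $\Actv(x^{*}) \subseteq \pacs(x)$: if $x^{*}_{i} = 0$ then $x_{i} < x^{*}_{i} + C = C$. Concatenating these with the complementarity inclusion produces the chain $\psas(s) \subseteq \Sactv(s^{*}) \subseteq \Actv(x^{*}) \subseteq \pacs(x)$. I do not expect a genuine obstacle here: the hard work (the error bound) is already done in Lemmas~\ref{lem-bound_x_s_strict_fea}--\ref{lem-boundXandXstar}, and the only points needing care are that $\max(\|\lambda\|,1)$ collapses to $1$ and that the ratios $\tau_{p}/\max(\tau_{p},\tau_{d})$ and $\tau_{d}/\max(\tau_{p},\tau_{d})$ do not exceed $1$, so the constants line up to give exactly $\|x-x^*\| < C$ and $\|s-s^*\| < C$.
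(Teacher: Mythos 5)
Your proposal is correct and follows essentially the same route as the paper's proof: invoke Lemma~\ref{lem-boundXandXstar}, use $\|\lambda\|<1$ to drop the $\max(\|\lambda\|,1)$ factor, combine \eqref{eqv:thm_bound_lambda} and \eqref{eqv:mu_bounds_doubside} to get $\|x-x^*\|<C$ and $\|s-s^*\|<C$, and then read off the two outer inclusions componentwise, with the middle inclusion coming from complementarity. The constants line up exactly as you describe, so no gap remains.
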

\begin{proof}
From $\|\lambda\| < 1$ and~\eqref{eqv:bounds_x_xstar_on_mu_lambda}, we have
$ \|x - x^{*}\| \leq \tau_{p} \left( C_{2} \mup + 4 \|\lambda\| \right)  $ and $\|s - s^{*}\| \leq \tau_{d} \left( C_{2} \mup + 4 \|\lambda\| \right)$,
which imply
\begin{equation}
\label{eqv_proof_bounds_primal}
	x^{*}_{i} - \tau_{p} \left( C_{2} \mup + 4 \|\lambda\| \right) \leq x_{i} \leq x^{*}_{i} + \tau_{p} \left( C_{2} \mup + 4 \|\lambda\| \right)
\end{equation}
and
\begin{equation}
\label{eqv:proof_bounds_dual}
	s^{*}_{i} - \tau_{d} \left( C_{2} \mup + 4 \|\lambda\| \right) \leq s_{i} \leq s^{*}_{i} + \tau_{d} \left( C_{2} \mup + 4 \|\lambda\| \right),
\end{equation}
for all $i \in \{1,\ldots,n\}$.
If $i \in \Actv(x^{*})$, from~\eqref{eqv:thm_bound_lambda},~\eqref{eqv:mu_bounds_doubside} and~\eqref{eqv_proof_bounds_primal}, we have $x_{i} < C$, namely $i \in \pacs(x)$. So $\Actv(x^{*}) \subseteq \pacs(x)$. 
If $i \notin \Sactv(s^{*})$, $s^{*}_{i} = 0$. Then from~\eqref{eqv:thm_bound_lambda},~\eqref{eqv:mu_bounds_doubside} and~\eqref{eqv:proof_bounds_dual}, we have $s_{i} < C$, namely, $i \notin \psas(s)$. Thus $\psas(s) \subseteq \Sactv(s^{*})$. From $x^{*}_{i}s^{*}_{i} = 0$ for all $i \in \{1,\ldots,n\}$, we have $\Sactv(s^{*}) \subseteq \Actv(x^{*})$.
\qed \end{proof}

Theorem~\ref{thm-bound_active_set} shows that $ \pacs (x) $ and $ \psas(s) $ serve as a pair of approximations that bound $\Actv(x^{*})$.  Next we go a step further and show that $\pacs (x)$ is equivalent to $\Actv (x^{*})$ under certain conditions.

\begin{theorem}
\label{thm-predicted_A_equals_Actual_A}
Let 
\begin{equation}
\label{eqv:psi_p}
\psi_{p}  = \inf_{x^{*} \in \sspo}\min_{ i \notin \Actv(x^{*})}  (x^{*}_{i})
\end{equation}
where ${\sspo}$ is the solution set of the primal problem in~\eqref{eqv:originalPD} and $\Actv(x^{*})$ is defined in~\eqref{eqv:actv_strictly_actv_at_a_sol}. Assume $\psi_{p} > 0$.
Fix $\lambda$ and $C$ such that  
\begin{equation} 
\label{eqv:thm_bound_lambda_value_C_primal}
	0 < \|\lambda\| < \min\left( 1, \frac{\psi_{p}}{16\max (\tau_{p}, \tau_{d})}\right)
	\quad \text{and} \quad
	C = \frac{\psi_{p}}{2},
\end{equation}
where $\tau_{p}$ and $\tau_{d}$ are the problem-dependent constants defined in~\eqref{eqv:bounds_x_xstar_on_mu_lambda}.
Let $\pntp \in \sfsp$ with $\mup$ sufficiently small, namely,
\begin{equation}
\label{eqv:mu_max_actv_primal}
	\mup < \frac{\psi_{p}}{ 4 C_{2} \max (\tau_{p}, \tau_{d}) },
\end{equation}
where $\sfsp$ is defined in~\eqref{eqv:strictlyFeasibleSet_per}, $\mup$ in~\eqref{eqv:mu} and $C_{2} > 0$ in~\eqref{eqv:C2}. Then there exists a~\eqref{eqv:originalPD} solution $\optSol$ such that 
\[
	\pacs(x) = \Actv(x^{*}),
\]
where $\pacs(x)$ is defined in~\eqref{eqv:predictedSets}.
\end{theorem}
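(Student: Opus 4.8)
The plan is to mimic the proof of Theorem~\ref{thm-bound_active_set}, but now exploit the quantity $\psi_{p}$ to pin down \emph{both} inclusions rather than just one. First I would invoke Lemma~\ref{lem-boundXandXstar}: since $\pntp \in \sfsp$, there is a~\eqref{eqv:originalPD} solution $\optSol$ and problem-dependent constants $\tau_{p},\tau_{d}$ with $\|x-x^{*}\| < \tau_{p}\bigl(C_{2}\mup + 4\|\lambda\|\max(\|\lambda\|,1)\bigr)$. Because $\|\lambda\|<1$ by~\eqref{eqv:thm_bound_lambda_value_C_primal}, this simplifies to $\|x-x^{*}\| < \tau_{p}(C_{2}\mup + 4\|\lambda\|)$, hence componentwise $x^{*}_{i} - \tau_{p}(C_{2}\mup+4\|\lambda\|) \le x_{i} \le x^{*}_{i} + \tau_{p}(C_{2}\mup+4\|\lambda\|)$ for all $i$. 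This fixed $\optSol$ is the one that will appear in the statement.

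Second, I would combine the hypotheses~\eqref{eqv:thm_bound_lambda_value_C_primal} and~\eqref{eqv:mu_max_actv_primal} to bound the perturbation term by $C$: from $\mup < \psi_{p}/(4C_{2}\max(\tau_{p},\tau_{d}))$ one gets $\tau_{p}C_{2}\mup < \psi_{p}/4$, and from $\|\lambda\| < \psi_{p}/(16\max(\tau_{p},\tau_{d}))$ one gets $4\tau_{p}\|\lambda\| < \psi_{p}/4$, so $\tau_{p}(C_{2}\mup + 4\|\lambda\|) < \psi_{p}/2 = C$.

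Third, the two inclusions. If $i \in \Actv(x^{*})$ then $x^{*}_{i}=0$, so $x_{i} < \tau_{p}(C_{2}\mup+4\|\lambda\|) < C$, i.e.\ $i \in \pacs(x)$; thus $\Actv(x^{*}) \subseteq \pacs(x)$. Conversely, if $i \notin \Actv(x^{*})$, then by the definition of $\psi_{p}$ as an infimum over all primal solutions we have $x^{*}_{i} \ge \min_{j\notin\Actv(x^{*})} x^{*}_{j} \ge \psi_{p}$, whence $x_{i} > x^{*}_{i} - \tau_{p}(C_{2}\mup+4\|\lambda\|) > \psi_{p} - \psi_{p}/2 = C$, so $i \notin \pacs(x)$; thus $\pacs(x) \subseteq \Actv(x^{*})$. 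Combining gives $\pacs(x) = \Actv(x^{*})$.

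I do not expect a genuine obstacle: the argument is a refinement of Theorem~\ref{thm-bound_active_set}. The one point needing care is that a single solution $\optSol$ must serve both inclusions; this is automatic because Lemma~\ref{lem-boundXandXstar} already supplies such an $x^{*}$, and the lower bound $x^{*}_{i} \ge \psi_{p}$ for $i \notin \Actv(x^{*})$ holds for \emph{every} primal solution (in particular this one) by construction of $\psi_{p}$. A secondary bookkeeping point is that the error bound for $x$ carries $\tau_{p}$ while the threshold conditions are phrased with $\max(\tau_{p},\tau_{d})$; since $\tau_{p} \le \max(\tau_{p},\tau_{d})$, the stated bounds are more than sufficient.
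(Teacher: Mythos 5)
Your proof is correct and follows essentially the same route as the paper: the paper obtains the inclusion $\Actv(x^{*}) \subseteq \pacs(x)$ by citing Theorem~\ref{thm-bound_active_set} (whose hypotheses are exactly met with $C=\psi_{p}/2$), whereas you unfold that argument from Lemma~\ref{lem-boundXandXstar} directly, and your reverse inclusion via $x^{*}_{i}\geq\psi_{p}$ for $i\notin\Actv(x^{*})$ is precisely the paper's argument. Your explicit remarks about using a single solution $\optSol$ from Lemma~\ref{lem-boundXandXstar} and about $\tau_{p}\leq\max(\tau_{p},\tau_{d})$ are the right bookkeeping and match what the paper does implicitly.
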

\begin{proof}
From Theorem~\ref{thm-bound_active_set} we have $\Actv(x^{*}) \subseteq \pacs(x)$. It remains to prove $\pacs(x) \subseteq \Actv( x^{*} )$.
If $i \notin \Actv(x^{*})$, from the left inequality in~\eqref{eqv_proof_bounds_primal},~\eqref{eqv:thm_bound_lambda_value_C_primal} and~\eqref{eqv:mu_max_actv_primal}, we have
\[
	x_{i} > x^{*}_{i} - \frac{\psi_{p}}{2} \cdot \frac{\tau_{p}}{\max( \tau_{p}, \tau_{d} )}
	\geq \inf_{x^{*} \in \sspo}\min_{ i \notin \Actv(x^{*})}  (x^{*}_{i}) - \frac{\psi_{p}}{2}
	=\psi_{p} - \frac{\psi_{p}}{2}= C.
\]
Thus $i \notin \pacs(x)$, which implies $\pacs(x) \subseteq \Actv( x^{*} )$.
\qed \end{proof}

Next, we show that $\psas(s)$, the predicted strictly active set at a strictly feasible point $\pntp$ of~\eqref{eqv:perPD}, is the same as $\Sactv(s^{*})$ at some~\eqref{eqv:originalPD} solution $\optSol$.

\begin{theorem}
\label{thm-predicted_A_plus_equals_Actual_A_plus}
Let 
\[
	\psi_{d}  = \inf_{(y^{*}, s^{*}) \in \ssdo}\min_{ i \in \Sactv(s^{*})}  (s^{*}_{i})
\] 
where $\ssdo$ is the solution set of the dual problem in~\eqref{eqv:originalPD} and $\Sactv(s^{*})$ is defined in~\eqref{eqv:actv_strictly_actv_at_a_sol}. 
Assume $\psi_{d} > 0$.
Fix $\lambda$ and $C$ such that
\begin{equation}
\label{eqv:thm_bound_lambda_value_C_dual}
	0 < \|\lambda\| < \min\left( 1, \frac{\psi_{d}}{16 \max (\tau_{p}, \tau_{d}) }\right)
	\quad\text{and}\quad 
	C = \frac{\psi_{d}}{2},
\end{equation}
where $\tau_{p}$ and $\tau_{d}$ are the problem-dependent constants in~\eqref{eqv:bounds_x_xstar_on_mu_lambda}.
Let $\pntp \in \sfsp$ with  $\mup$ sufficiently small, namely
\begin{equation}
\label{eqv:mu_max_actv_dual}
	\mup < \frac{\psi_{d}}{ 4 C_{2} \max (\tau_{p}, \tau_{d}) },
\end{equation}
where $\sfsp$ is defined in~\eqref{eqv:strictlyFeasibleSet_per}, $\mup$ in~\eqref{eqv:mu} and $C_{2} > 0$ in~\eqref{eqv:C2}.
Then there exists a~\eqref{eqv:originalPD} solution $\optSol$ such that 
\[
	\psas(s) = \Sactv(s^{*}),
\]
where $\psas(s)$ is defined in~\eqref{eqv:predictedSets}.
\end{theorem}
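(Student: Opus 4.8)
The plan is to follow the template of the proof of Theorem~\ref{thm-predicted_A_equals_Actual_A}, transposed to the dual side. The first observation is that the hypotheses here are tailored to feed directly into Theorem~\ref{thm-bound_active_set}: taking the threshold $C = \psi_{d}/2$, the bound \eqref{eqv:thm_bound_lambda_value_C_dual} on $\|\lambda\|$ is exactly \eqref{eqv:thm_bound_lambda}, and the bound \eqref{eqv:mu_max_actv_dual} on $\mup$ is exactly \eqref{eqv:mu_bounds_doubside}. Hence Theorem~\ref{thm-bound_active_set} applies and produces a~\eqref{eqv:originalPD} solution $\optSol$ for which the whole chain $\psas(s) \subseteq \Sactv(s^{*}) \subseteq \Actv(x^{*}) \subseteq \pacs(x)$ holds; in particular $\psas(s) \subseteq \Sactv(s^{*})$. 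What remains is the reverse inclusion $\Sactv(s^{*}) \subseteq \psas(s)$ for this same $\optSol$.

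For the reverse inclusion I would use the lower half of the dual estimate \eqref{eqv:proof_bounds_dual} established inside the proof of Theorem~\ref{thm-bound_active_set}, namely $s_{i} \geq s^{*}_{i} - \tau_{d}(C_{2}\mup + 4\|\lambda\|)$, which is valid because $\|\lambda\| < 1$ and because Lemma~\ref{lem-boundXandXstar} supplies the bound \eqref{eqv:bounds_x_xstar_on_mu_lambda}. If $i \in \Sactv(s^{*})$, then $s^{*}_{i} \geq \psi_{d}$ by the definition of $\psi_{d}$ as an infimum over the dual solution set (which includes this particular $\optSol$). Splitting the error term as in the primal case, \eqref{eqv:thm_bound_lambda_value_C_dual} gives $4\tau_{d}\|\lambda\| < \psi_{d}/4$ and \eqref{eqv:mu_max_actv_dual} gives $\tau_{d}C_{2}\mup < \psi_{d}/4$, so $s_{i} > \psi_{d} - \psi_{d}/4 - \psi_{d}/4 = \psi_{d}/2 = C$, and therefore $i \in \psas(s)$. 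This proves $\Sactv(s^{*}) \subseteq \psas(s)$, which together with the first part gives $\psas(s) = \Sactv(s^{*})$.

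The only subtlety, and the point I would be careful about, is that the error bounds underlying Theorem~\ref{thm-bound_active_set} (ultimately Lemma~\ref{lem-bound_x_s_strict_fea}) only guarantee \emph{some}~\eqref{eqv:originalPD} solution close to $\pntp$, not an arbitrary one; so both inclusions must be argued for the \emph{same} $\optSol$, and the step $s^{*}_{i} \geq \psi_{d}$ must be justified through the infimum defining $\psi_{d}$ rather than through any single fixed solution. The hypothesis $\psi_{d} > 0$ is precisely what guarantees that $C = \psi_{d}/2 > 0$ and that the admissible interval for $\|\lambda\|$ in \eqref{eqv:thm_bound_lambda_value_C_dual} is nonempty; beyond that, the proof is just the arithmetic of controlling the $\mup$-term and the $\|\lambda\|$-term separately against $\psi_{d}/4$ each, exactly as on the primal side.
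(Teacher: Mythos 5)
Your proposal is correct and follows essentially the same route as the paper's proof: invoke Theorem~\ref{thm-bound_active_set} with $C=\psi_d/2$ to get $\psas(s)\subseteq\Sactv(s^{*})$ for some~\eqref{eqv:originalPD} solution, then use the lower bound in~\eqref{eqv:proof_bounds_dual} together with $s^{*}_{i}\geq\psi_d$ (via the infimum) and the bounds on $\|\lambda\|$ and $\mup$ to obtain $s_i > C$ and hence the reverse inclusion. Your extra remark about keeping the same solution $\optSol$ throughout is exactly the point the paper handles implicitly, so nothing is missing.
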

\begin{proof}
From Theorem~\ref{thm-bound_active_set}, we have $ \psas(s) \subseteq \Sactv(s^{*})$.
If $i \in \Sactv(s^{*})$, $s^{*}_{i} > 0$. This,~\eqref{eqv:proof_bounds_dual},~\eqref{eqv:thm_bound_lambda_value_C_dual} and~\eqref{eqv:mu_max_actv_dual} give us
\[
s_{i} > s^{*}_{i} - \frac{\psi_{d}}{2} \cdot \frac{\tau_{d}}{\max( \tau_{p}, \tau_{d} )}
	\geq \inf_{(y^{*},s^{*} )\in \ssdo}\min_{ i \in \Sactv(s^{*})}  (s^{*}_{i}) - \frac{\psi_{d}}{2}
	=\psi_{d} - \frac{\psi_{d}}{2}= C,
\]
namely $ \Sactv(s^{*}) \subseteq \psas(s)$.
\qed \end{proof}

\paragraph*{Remarks on Theorems~\ref{thm-bound_active_set}--\ref{thm-predicted_A_plus_equals_Actual_A_plus}.}\hfill

$\bullet$ We require $\mup$, the mean value of the complementary products,  to be sufficiently small in Theorems~\ref{thm-bound_active_set}--\ref{thm-predicted_A_plus_equals_Actual_A_plus}. This choice is possible since we have $\mup = 0$ at any optimal solution of~\eqref{eqv:perPD} and $\mup$  can be decreased to zero (such as in an {\ipm} framework).

$\bullet$  In Theorems~\ref{thm-predicted_A_equals_Actual_A} and~\ref{thm-predicted_A_plus_equals_Actual_A_plus}, we do not require that the optimal active set of~\eqref{eqv:perPD} is the same as that of~\eqref{eqv:originalPD} in order to be able to predict the original optimal active set of~\eqref{eqv:originalPD}.

$\bullet$  $\psi_{p}$ in~\eqref{eqv:psi_p} is positive if the primal problem in~\eqref{eqv:originalPD} has a unique (degenerate or nondegenerate) solution, but we expect that it may often be zero in the case of multiple solutions. (Clearly, in our implementations,
we do not choose the cut-off value based on the theoretical quantity $\psi_{p}$.)
Similarly to $\psi_{p}$, if the dual problem in~\eqref{eqv:originalPD} has a unique (degenerate or nondegenerate) solution, we have $\psi_{d} > 0$. 

$\bullet$  Fix $\lambda$ sufficiently small and let $\iterKp$ be iterates of a primal-dual path-following {\ipm} applied to~\eqref{eqv:perPD}. 
Then assuming these iterates belong to some good neighbourhood of the central path of~\eqref{eqv:perPD} and that the barrier parameter is decreased appropriately, we have $\mup^{k} \to 0$ as $k \to \infty$~\cite[Theorem 5.11]{wright}. So, by applying Theorem~\ref{thm-predicted_A_equals_Actual_A},  for each $k$ sufficiently large, there exists a~\eqref{eqv:originalPD} solution $\optSol$ such that $\pacs (x^{k}) = \Actv (x^{*})$ (see also Lemma~\ref{lem-wright_lemma_to_prediction}~below). \hfill$\Box$

\section{Comparing perturbed and unperturbed active-set predictions}
\label{sec-copmare_p_up}
%\resetcounters

\subsection{Comparing with active-set prediction for~\eqref{eqv:perPD}}
Consider the `large' neighbourhood of the perturbed central path
\begin{equation}
\label{eqv:NInf}
\Ninfp = \{ \, (x,y,s) \in \sfsp \,\,|\,\, (x_{i} + \lambda_{i})(s_{i} + \lambda_{i}) \geq \gamma \mup, \, i=1,\ldots,n \,\},
\end{equation}
where $\sfsp$ is defined in~\eqref{eqv:strictlyFeasibleSet_per} and $\mup$ is defined in~\eqref{eqv:mu}; see~\cite[(1.16)]{wright} for the definition~\eqref{eqv:NInf} in the case of $\lambda \equiv 0$.

Next we rephrase Lemma 5.13 in~\cite{wright} as an active-set prediction result for~\eqref{eqv:perPD}.

\begin{lemma}
\label{lem-wright_lemma_to_prediction}
Let $\pntp$ in $\Ninfp$ and $\mup$ defined in~\eqref{eqv:mu}. Assume $C$ in~\eqref{eqv:predictedSets} is set to $C = \frac{\epsilon(A,b_{\lambda},c_{\lambda})\gamma}{n} $, where $\epsilon(A,b_{\lambda},c_{\lambda})$ is defined in~\eqref{eqv:eps_hat}. Then when $\mup < \bar{\mu}^{\max}_{\lambda}$, where
\begin{equation}
\label{eqv:mu_bar_max_p}
	\bar{\mu}^{\max}_{\lambda} = \frac{\epsilon^{2}(A,b_{\lambda},c_{\lambda})\gamma}{n^{2}}, 
\end{equation}
for any strictly complementary solution $\optSollambda$ of~\eqref{eqv:perPD} we have
\[
	\pacs (x+\lambda) = \Actv(x^{*}_{\lambda} + \lambda),
\]
where $\pacs (x+\lambda)$ is defined in~\eqref{eqv:predictedSets} with $x$ replaced by $x+\lambda$ and $\Actv(x^{*}_{\lambda} + \lambda)$ is defined in~\eqref{eqv:actv_strictly_actv_at_a_sol} with $x^{*}$ replaced by $x^{*}_{\lambda} + \lambda$.
\end{lemma}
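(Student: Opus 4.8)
The plan is to re-read the statement through the change of variables $p=x+\lambda$, $q=s+\lambda$ that underlies the equivalent pair~\eqref{eqv:perPD_formPQ}, and then apply Lemma~\ref{lem-boundXandSAccording2Partition} together with the defining inequalities of $\Ninfp$. First I would note that, since $\pntp\in\Ninfp\subseteq\sfsp$ by~\eqref{eqv:NInf}, the point $(p,y,q)$ is strictly feasible for~\eqref{eqv:perPD_formPQ} and lies in the standard neighbourhood $\mathcal{N}_{-\infty}(\gamma)$ of the central path of~\eqref{eqv:perPD_formPQ}: indeed $p^{T}q/n=\mup$ by~\eqref{eqv:mu} and $p_{i}q_{i}\geq\gamma\mup$ for all $i$. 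Moreover, for any strictly complementary solution $\optSollambda$ of~\eqref{eqv:perPD}, the pair $(x^{*}_{\lambda}+\lambda,\,s^{*}_{\lambda}+\lambda)$ is a strictly complementary solution of~\eqref{eqv:perPD_formPQ}, so by definition of the strictly complementary partition $(\Actv_{\lambda},\Iactv_{\lambda})$ of the solution set of~\eqref{eqv:perPD} (see Lemma~\ref{lem-boundXandSAccording2Partition}) we have $\Actv(x^{*}_{\lambda}+\lambda)=\{\,i:(x^{*}_{\lambda}+\lambda)_{i}=0\,\}=\Actv_{\lambda}$. Hence it suffices to show that $\pacs(x+\lambda)=\Actv_{\lambda}$, namely that $p_{i}<C$ for $i\in\Actv_{\lambda}$ and $p_{i}\geq C$ for $i\in\Iactv_{\lambda}$, where $C=\epsilon(A,b_{\lambda},c_{\lambda})\gamma/n$.

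For the inclusion $\Actv_{\lambda}\subseteq\pacs(x+\lambda)$: if $i\in\Actv_{\lambda}$, then the first inequality in~\eqref{eqv:xi_si_sep} of Lemma~\ref{lem-boundXandSAccording2Partition} gives $0<p_{i}\leq\mup/C_{1}=n\mup/\epsilon(A,b_{\lambda},c_{\lambda})$, using $C_{1}=\epsilon(A,b_{\lambda},c_{\lambda})/n$ from~\eqref{eqv_C1}; combining with the hypothesis $\mup<\bar{\mu}^{\max}_{\lambda}=\epsilon^{2}(A,b_{\lambda},c_{\lambda})\gamma/n^{2}$ yields $p_{i}<\epsilon(A,b_{\lambda},c_{\lambda})\gamma/n=C$, so $i\in\pacs(x+\lambda)$. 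For the reverse inclusion: if $i\in\Iactv_{\lambda}$, then the second inequality in~\eqref{eqv:xi_si_sep} gives $0<q_{i}\leq n\mup/\epsilon(A,b_{\lambda},c_{\lambda})$, while membership in $\Ninfp$ gives $p_{i}q_{i}\geq\gamma\mup$; dividing, $p_{i}\geq\gamma\mup/q_{i}\geq\gamma\epsilon(A,b_{\lambda},c_{\lambda})/n=C$, so $i\notin\pacs(x+\lambda)$. Since $(\Actv_{\lambda},\Iactv_{\lambda})$ partitions $\{1,\ldots,n\}$, the two inclusions give $\pacs(x+\lambda)=\Actv_{\lambda}=\Actv(x^{*}_{\lambda}+\lambda)$, as claimed.

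I do not expect a genuine obstacle here, since the result is essentially Wright's Lemma~5.13 transported through the substitution $p=x+\lambda$, $q=s+\lambda$; the only points requiring a little care are (i) checking that $\Ninfp$ in the $(x,y,s)$ coordinates is exactly the $\mathcal{N}_{-\infty}(\gamma)$ neighbourhood of the central path of~\eqref{eqv:perPD_formPQ}, so that Lemma~\ref{lem-boundXandSAccording2Partition} applies verbatim, and (ii) verifying that the threshold $C$ and the bound $\bar{\mu}^{\max}_{\lambda}$ are calibrated so that both inequality chains close, which reduces to the elementary equivalence $n\mup/\epsilon(A,b_{\lambda},c_{\lambda})<\epsilon(A,b_{\lambda},c_{\lambda})\gamma/n\iff\mup<\epsilon^{2}(A,b_{\lambda},c_{\lambda})\gamma/n^{2}$. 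Note that only the first chain actually uses the smallness of $\mup$; the lower bound $p_i\geq C$ on $\Iactv_{\lambda}$ holds for every $\mup>0$.
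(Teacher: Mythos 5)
Your proof is correct and follows essentially the same route as the paper: both work in the $(p,q)=(x+\lambda,s+\lambda)$ form, identify $\Actv(x^{*}_{\lambda}+\lambda)$ with the strictly complementary partition $\Actv_{\lambda}$, and calibrate $C=\epsilon(A,b_{\lambda},c_{\lambda})\gamma/n$ against $\bar{\mu}^{\max}_{\lambda}$ exactly as in Wright's Lemma~5.13 applied to~\eqref{eqv:perPD_formPQ}. The only cosmetic difference is that the paper quotes that lemma wholesale for the full separation chain~\eqref{eqv:perturbed_probs_A_I_sep}, whereas you re-derive its lower-bound half ($p_i\geq C$ on $\Iactv_{\lambda}$) directly from the $\Ninfp$ condition together with the upper bounds of Lemma~\ref{lem-boundXandSAccording2Partition}, which is precisely how that half of Wright's lemma is proved.
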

\begin{proof}
We work with the equivalent form~\eqref{eqv:perPD_formPQ} of~\eqref{eqv:perPD}.
Given~\eqref{eqv:mu_bar_max_p}, apply~\cite[Lemma 5.13]{wright} to~\eqref{eqv:perPD_formPQ}, recalling that $x = p-\lambda$ and $s = q-\lambda$, and then we have
\begin{equation}
\label{eqv:perturbed_probs_A_I_sep}
\begin{array}{rl}
 i \in \Actv_{\lambda}  : &  0 < x_{i} + \lambda_{i} \leq \frac{\mup}{C_{1}} < C_{1}\gamma \leq s_{i} + \lambda_{i}, \\
 i \in \Iactv_{\lambda} : &  0 < s_{i} + \lambda_{i} \leq \frac{\mup}{C_{1}} < C_{1}\gamma \leq x_{i} + \lambda_{i},
\end{array}
\end{equation}
where $(\Actv_{\lambda}, \Iactv_{\lambda})$ is the strictly complementary active and inactive partition of the solution set of~\eqref{eqv:perPD_formPQ}. For any strictly complementary solution $\optSollambda$ of~\eqref{eqv:perPD}, $(x^{*}_{\lambda}+\lambda,y^{*}_{\lambda}, s^{*}_{\lambda} + \lambda)$ is a strictly complementary solution of~\eqref{eqv:perPD_formPQ}. This and the definition of $\Actv (x^{*}_{\lambda} + \lambda)$ give us that $\Actv (x^{*}_{\lambda} + \lambda) = \Actv_{\lambda}$. 
From~\eqref{eqv:perturbed_probs_A_I_sep} and the definition of $\pacs (x+\lambda)$, we also have $\pacs (x+\lambda) = \Actv_{\lambda}$. 
\qed \end{proof}

Substituting~\eqref{eqv:C2} into~\eqref{eqv:mu_max_actv_primal}, we obtain the following 
threshold value
\begin{equation}
\label{eqv:mupmax}
	\mu^{\max}_{\lambda}  := \frac{\psi_{p}\epsilon(A,b_{\lambda}, c_{\lambda})}{4 n \max (\tau_{p}, \tau_{d})  \left( \sqrt{n} + \epsilon(A,b_{\lambda}, c_{\lambda}) \right)} ,
\end{equation}
where $\epsilon(A,b_{\lambda}, c_{\lambda})$ is defined in~\eqref{eqv:eps_hat}, $\psi_{p}$ in~\eqref{eqv:psi_p}, and $\tau_{p}$ and $\tau_{d}$ are the positive constants in the bounds~\eqref{eqv:bounds_x_xstar_on_mu_lambda}. 
Theorem~\ref{thm-predicted_A_equals_Actual_A} provides that when $\psi_{p}>0$ and $\lambda$ is sufficiently small and fixed, if $\mup < \mu^{\max}_{\lambda}$, we can predict the optimal active set of~\eqref{eqv:originalPD}. 
Lemma~\ref{lem-wright_lemma_to_prediction} shows that when $\mup < \bar{\mu}^{\max}_{\lambda}$, where $\bar{\mu}^{\max}_{\lambda}$ is defined in~\eqref{eqv:mu_bar_max_p}, we can provide the strictly complementary partition of the solution set of~\eqref{eqv:perPD} from any primal-dual pair in the neighbourhood $\Ninfp$ of the perturbed central path.
To verify if our approach can predict the optimal active set of~\eqref{eqv:originalPD} before the strictly complementary partition of~\eqref{eqv:perPD}, we determine conditions under which $\mu^{\max}_{\lambda} > \bar{\mu}^{\max}_{\lambda}$.  
\begin{theorem}
\label{thm-mu_max_lambda_g_hat_mu_max_lambda}
In the conditions of Theorem~\ref{thm-predicted_A_equals_Actual_A}, let
\begin{equation}
\label{eqv:rho}
	\rho = \frac{\psi_{p}}{  \max (\tau_{p}, \tau_{d}) }.
\end{equation}
If
\begin{equation}
\label{eqv:hat_mu_max_g_mu_max}
		\epsilon(A,b_{\lambda}, c_{\lambda})  \leq  \mathcal{O} \left( \sqrt{n\rho} \min\left(  \sqrt{\rho}, 1  \right)  \right),
\end{equation}
then 
\[
\mu^{\max}_{\lambda} > \bar{\mu}^{\max}_{\lambda},
\] 
where $\epsilon(A,b_{\lambda},c_{\lambda})$ is defined in~\eqref{eqv:eps_hat}, $\mu^{\max}_{\lambda} $ in~\eqref{eqv:mupmax} and $ \bar{\mu}^{\max}_{\lambda}$ in~\eqref{eqv:mu_bar_max_p}.
\end{theorem}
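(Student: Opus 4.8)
The plan is to unwind the two threshold formulas and reduce the claim to an elementary scalar inequality. Write $\epsilon := \epsilon(A,b_{\lambda},c_{\lambda}) > 0$ and $M := \max(\tau_{p},\tau_{d})$, so that, by~\eqref{eqv:mupmax} and~\eqref{eqv:rho}, $\mu^{\max}_{\lambda} = \rho\,\epsilon / \big(4n(\sqrt{n}+\epsilon)\big)$, while $\bar{\mu}^{\max}_{\lambda} = \gamma\epsilon^{2}/n^{2}$ by~\eqref{eqv:mu_bar_max_p}. Cancelling the common positive factor $\epsilon$ and cross-multiplying by the positive quantities $4n(\sqrt{n}+\epsilon)$ and $n^{2}$, the desired inequality $\mu^{\max}_{\lambda} > \bar{\mu}^{\max}_{\lambda}$ becomes equivalent to
\[
	\rho\, n \;>\; 4\gamma\,\epsilon\,(\sqrt{n}+\epsilon) \;=\; 4\gamma\big(\epsilon\sqrt{n} + \epsilon^{2}\big).
\]

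Since $\gamma$ is a fixed constant (indeed $\gamma \in (0,1)$ in the usual definition of $\Ninfp$) it is harmless to absorb it, so it suffices to make $\rho n$ exceed $4\big(\epsilon\sqrt{n}+\epsilon^{2}\big)$. For this I would bound the two right-hand terms separately: imposing $4\epsilon\sqrt{n} \le \tfrac14\rho n$ and $4\epsilon^{2} \le \tfrac14\rho n$ --- that is, $\epsilon \le \tfrac{1}{16}\sqrt{n}\,\rho$ and $\epsilon \le \tfrac14\sqrt{n\rho}$ --- forces the sum $4(\epsilon\sqrt{n}+\epsilon^{2})$ to be at most $\tfrac12\rho n < \rho n$, giving the strict inequality. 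Both constraints are implied by $\epsilon \le c\,\min\!\big(\sqrt{n}\,\rho,\ \sqrt{n\rho}\big)$ for a suitable absolute $c>0$; and since $\sqrt{n}\,\rho / \sqrt{n\rho} = \sqrt{\rho}$ one has $\min\!\big(\sqrt{n}\,\rho,\ \sqrt{n\rho}\big) = \sqrt{n\rho}\,\min(\sqrt{\rho},1)$, which is exactly condition~\eqref{eqv:hat_mu_max_g_mu_max}. Reinstating the dependence on $M$ and $\gamma$ (both absorbed into the $\mathcal{O}(\cdot)$) completes the argument.

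There is no genuine difficulty here; the only point requiring care is that the dominant term on the right-hand side of the reduced inequality switches between $\epsilon\sqrt{n}$ (when $\rho \le 1$) and $\epsilon^{2}$ (when $\rho > 1$), which is precisely why the sharp upper bound on $\epsilon$ is the product $\sqrt{n\rho}\,\min(\sqrt{\rho},1)$ rather than a single monomial in $n$ and $\rho$. Verifying that the constant hidden in $\mathcal{O}(\cdot)$ depends only on fixed data ($\gamma$ and the ratio $\psi_{p}/\max(\tau_{p},\tau_{d})$ already packaged in $\rho$) and not on $n$ is the main box to check.
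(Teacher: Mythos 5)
Your proposal is correct, and it starts from the same reduction as the paper: both unwind \eqref{eqv:mupmax} and \eqref{eqv:mu_bar_max_p}, cancel $\epsilon:=\epsilon(A,b_{\lambda},c_{\lambda})>0$, and arrive at the scalar inequality $\rho n > 4\gamma\,\epsilon(\sqrt{n}+\epsilon)$ (equivalently $\epsilon^{2}+\sqrt{n}\,\epsilon-\tfrac{\rho n}{4\gamma}<0$; note the paper's displayed quadratic omits the factor $n$ in the constant term, a typo, since its subsequent threshold \eqref{eqv:bound_hat_eps_full} is exactly the positive root of the corrected quadratic --- your derivation implicitly confirms this). Where you differ is in how you finish: the paper solves the quadratic exactly and then lower-bounds the root through the chain $\rho/(\sqrt{\gamma+\rho}+\sqrt{\gamma})\ge\rho/(\sqrt{\rho}+2\sqrt{\gamma})\ge\tfrac{1}{3\sqrt{\gamma}}\sqrt{\rho}\min(\sqrt{\rho},1)$, keeping the explicit $\gamma$-dependence $\approx 1/(6\gamma)$ in the hidden constant; you instead bound the two terms $\epsilon\sqrt{n}$ and $\epsilon^{2}$ separately and absorb $\gamma$ via $\gamma\in(0,1)$, which yields the absolute constant $1/16$. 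Your route is more elementary (no quadratic formula, no surd manipulations) at the cost of a slightly more conservative threshold, since $1/16<1/(6\gamma)$ for $\gamma\in(0,1)$; both give a valid sufficient condition of the form \eqref{eqv:hat_mu_max_g_mu_max}, and your observation that $\min(\sqrt{n}\,\rho,\sqrt{n\rho})=\sqrt{n\rho}\,\min(\sqrt{\rho},1)$ is precisely the case split ($\rho\lessgtr 1$) that the paper handles with $\max(\sqrt{\rho},1)$. The only points needing the care you already flag are $\rho>0$ (guaranteed by $\psi_{p}>0$ in the hypotheses of Theorem~\ref{thm-predicted_A_equals_Actual_A}) and $\gamma\in(0,1)$, which the paper also assumes.
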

\begin{proof}
Note that $\mu^{\max}_{\lambda} > \bar{\mu}^{\max}_{\lambda}$ is equivalent to 
%\resizebox{.45\textwidth}{!}{
$$ \epsilon^{2}(A,b_{\lambda}, c_{\lambda}) + \sqrt{n} \epsilon(A,b_{\lambda}, c_{\lambda})  - \frac{\rho}{4 \gamma} < 0, $$
which is satisfied if
\begin{equation}
\label{eqv:bound_hat_eps_full}
	0 < \epsilon(A,b_{\lambda}, c_{\lambda})  \leq  
	\frac{\sqrt{n}}{2\sqrt{\gamma}} \cdot \frac{ \rho }{  \sqrt{ \gamma + \rho } + \sqrt{\gamma} }.
\end{equation}
Since $\gamma \in (0,1)$ and $\sqrt{a+b} \leq \sqrt{a} + \sqrt{b}$ for any $a$ and $b$ nonnegative scalars, we have
\[
\frac{  \rho }{  \sqrt{ \gamma +  \rho } + \sqrt{\gamma} } 
\geq 
\frac{ \rho }{  \sqrt{\rho}  + 2\sqrt{\gamma} } 
\geq
\frac{1}{ 3\sqrt{\gamma} } \frac{   \rho }{   \max\left(\sqrt{ \rho }, 1\right)  }
\geq
\frac{\sqrt{\rho}}{3 \sqrt{\gamma}} \min \left( \sqrt{\rho}, 1 \right).
\]
The result follows from~\eqref{eqv:bound_hat_eps_full} and the above inequalities.
\qed \end{proof}

Theorem~\ref{thm-mu_max_lambda_g_hat_mu_max_lambda} implies that when solving the perturbed problems~\eqref{eqv:perPD}, if $\epsilon(A,b_{\lambda}, c_{\lambda})$ is sufficiently small, we can predict the optimal active set of~\eqref{eqv:originalPD} before $\mup$ gets so small that we can even obtain the strictly complementary partition of~\eqref{eqv:perPD}. 
To see an example when~\eqref{eqv:hat_mu_max_g_mu_max} is satisfied, see our remarks after Theorem~\ref{thm-mu_max_lambda_g_mu_max}.

\Remark
In Theorem~\ref{thm-mu_max_lambda_g_hat_mu_max_lambda}, we do not require the optimal active set of~\eqref{eqv:perPD} to be the same as the optimal active set of~\eqref{eqv:originalPD}. In fact, we will show that, in the numerical tests for the randomly generated problems (degenerate or nondegenerate), the optimal active sets of most perturbed problems are different from those of the original problems, but we can still predict sooner/better for~\eqref{eqv:originalPD}. In particular, the numerical experiments show that we are not solving~\eqref{eqv:perPD} to high accuracy and  there are iterations where we can predict the active set for~\eqref{eqv:originalPD} but we are not close to the solution set of~\eqref{eqv:perPD} or able to predict the active set of~\eqref{eqv:perPD}; see page~\pageref{exp-predict_sooner_than_scp_per}. \hfill $\Box$

\subsection{Comparing with active-set prediction for~\eqref{eqv:originalPD}}
Similarly to Lemma~\ref{lem-wright_lemma_to_prediction}, when we solve the original~\eqref{eqv:originalPD} problems we can predict the optimal~\eqref{eqv:originalPD} active set when the~\eqref{eqv:originalPD} duality gap is smaller than some threshold.  In this section, we intend to compare this threshold with the threshold value of $\mup$ when we are able to predict the optimal active set of~\eqref{eqv:originalPD} by solving~\eqref{eqv:perPD} and show that the latter could be greater than the former under certain conditions (Theorem~\ref{thm-mu_max_lambda_g_mu_max}). 

Lemma 5.13 in~\cite{wright} yields an active-set prediction result for~\eqref{eqv:originalPD}. In fact this result can be obtained by setting 
$\lambda = 0$ in Lemma~\ref{lem-wright_lemma_to_prediction}, but for clarity, we restate it here.

\begin{lemma}
\label{lem-wright_lemma_to_prediction_original}
{\normalfont \textbf{\cite[Lemma 5.13]{wright}}}
Let $(x,y,s)$ in $\Ninf$, where $\Ninf$ is the neighbourhood $\Ninfp$ in~\eqref{eqv:NInf} with $\lambda = 0$, and let $\mu$ as in~\eqref{eqv:mu} with $\lambda = 0$. Let the cut-off value $C$ in~\eqref{eqv:predictedSets} be set to $C = \frac{\epsilon(A,b,c)\gamma}{n} $, where 
\begin{equation}
\label{eqv:eps}
	\epsilon(A,b,c) = \min\left(\,\,\min_{i\in \Iactv}\sup_{x^{*} \in \sspo} x^{*}_{i}, \,\,
\min_{i\in \Actv}\sup_{(y^{*},s^{*}) \in \ssdo} \,s^{*}_{i}   \right) > 0,
%%\footnote{$\epsilon(A,b,c) > 0$ provided that the feasible set of~\eqref{eqv:originalPD} is nonempty.}
\end{equation}
$\sspo$ and $\ssdo$ are the primal and dual solution sets of~\eqref{eqv:originalPD} respectively, and $(\Actv, \Iactv)$ is the strictly complementary active and inactive partition of the solution set of~\eqref{eqv:originalPD}.
When $\mu < \mu^{\max}$, where 
\begin{equation}
\label{eqv:mu_max}
	\mu^{\max} = \frac{\epsilon^{2}(A,b,c)}{n^{2}} \gamma, 
\end{equation}
then for any strictly complementary solution $\optSol$ of~\eqref{eqv:originalPD} we have
\[
	\pacs (x) = \Actv(x^{*}),
\]
where $\pacs (x)$ is defined in~\eqref{eqv:predictedSets} and $\Actv(x^*)$ is defined in~\eqref{eqv:actv_strictly_actv_at_a_sol}.
\end{lemma}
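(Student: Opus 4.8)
The plan is to obtain this statement as the $\lambda \equiv 0$ specialization of Lemma~\ref{lem-wright_lemma_to_prediction}, which we have already established; for completeness I also indicate the direct route via \cite[Lemma 5.13]{wright}.

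First I would observe that setting $\lambda \equiv 0$ collapses every object appearing in Lemma~\ref{lem-wright_lemma_to_prediction} onto its unperturbed counterpart. Indeed $b_{\lambda} = b + A\lambda = b$ and $c_{\lambda} = c + \lambda = c$, so \eqref{eqv:perPD} coincides with \eqref{eqv:originalPD} and its equivalent form \eqref{eqv:perPD_formPQ} is literally \eqref{eqv:originalPD} under $p = x$, $q = s$. Consequently $\sfsp$ becomes the strictly feasible set of \eqref{eqv:originalPD}, $\Ninfp$ becomes $\Ninf$, $\mup$ becomes $\mu$ as in \eqref{eqv:mu} with $\lambda = 0$, the strictly complementary active/inactive partition $(\Actv_{\lambda},\Iactv_{\lambda})$ of \eqref{eqv:perPD} becomes the partition $(\Actv,\Iactv)$ of \eqref{eqv:originalPD}, and therefore $\epsilon(A,b_{\lambda},c_{\lambda})$ in \eqref{eqv:eps_hat} becomes exactly $\epsilon(A,b,c)$ in \eqref{eqv:eps}.

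Next I would track the hypotheses and conclusion through this substitution. The cut-off choice $C = \frac{\epsilon(A,b_{\lambda},c_{\lambda})\gamma}{n}$ becomes $C = \frac{\epsilon(A,b,c)\gamma}{n}$; the threshold $\bar{\mu}^{\max}_{\lambda}$ of \eqref{eqv:mu_bar_max_p} becomes $\mu^{\max}$ of \eqref{eqv:mu_max}; and any strictly complementary solution $\optSol$ of \eqref{eqv:originalPD} is also a strictly complementary solution of the (trivially) perturbed problem, so the conclusion $\pacs(x+\lambda) = \Actv(x^{*}_{\lambda}+\lambda)$ of Lemma~\ref{lem-wright_lemma_to_prediction} reads $\pacs(x) = \Actv(x^{*})$. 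Applying Lemma~\ref{lem-wright_lemma_to_prediction} with $\lambda \equiv 0$ then gives the claim.

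There is no genuine obstacle here; the only point needing a line of care is checking that $\epsilon$ and the strictly complementary partition are invariant under $\lambda \equiv 0$ (immediate, since the two LPs are identical) and that $\epsilon(A,b,c) > 0$ in \eqref{eqv:eps}, which is inherited from the corresponding positivity of $\epsilon(A,b_{\lambda},c_{\lambda})$ recorded in Lemma~\ref{lem-boundXandSAccording2Partition} (equivalently, it follows directly from \cite[Lemma 5.13]{wright} applied to \eqref{eqv:originalPD}). Alternatively, one may bypass Lemma~\ref{lem-wright_lemma_to_prediction} and invoke \cite[Lemma 5.13]{wright} directly on \eqref{eqv:originalPD}: this yields the separation bounds of the type \eqref{eqv:perturbed_probs_A_I_sep} with $\lambda = 0$, from which $\pacs(x) = \Actv = \Actv(x^{*})$ is read off exactly as in the proof of Lemma~\ref{lem-wright_lemma_to_prediction}.
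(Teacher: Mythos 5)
Your proposal is correct and matches the paper's own justification: the paper presents this result as \cite[Lemma 5.13]{wright} and explicitly notes that it can be obtained by setting $\lambda = 0$ in Lemma~\ref{lem-wright_lemma_to_prediction}, which is exactly the specialization you carry out (and your direct invocation of Wright's lemma is the same alternative route the paper cites).
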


Before we deduce a relationship between $\mu^{\max}_{\lambda}$ in~\eqref{eqv:mupmax} and $\mu^{\max}$ in~\eqref{eqv:mu_max}, we first relate two other important quantities, $\epsilon(A,b_{\lambda},c_{\lambda})$ and $\epsilon(A,b,c)$. 

\begin{lemma}
\label{lem-hat_epsilon_g_epsilon}
Assume~\eqref{asm-full_row_rank} holds  and~\eqref{eqv:originalPD} has a unique and nondegenerate solution $\optSol$. 
Then
there exists a sufficiently small 
$ \bar{\lambda}(A,b,c,x^{*},s^{*})>0 $
such that
\begin{equation}
\label{eqv:epsilon_epsilon_hat}
	\epsilon(A,b_{\lambda},c_{\lambda}) > \epsilon(A,b,c)
\end{equation}
for all $\lambda$ such that $0 \leq \lambda = \alpha \bar{\lambda}<\bar{\lambda}$, where $\alpha \in (0,1)$, and 
where $\epsilon(A,b_{\lambda}, c_{\lambda}) $ is defined in~\eqref{eqv:eps_hat} and $ \epsilon(A,b,c)$ in~\eqref{eqv:eps}.
\end{lemma}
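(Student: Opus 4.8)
The plan is to exploit the explicit formulas for the two $\epsilon$-quantities under the nondegeneracy hypothesis, where everything reduces to a single solution $\optSol$ and a fixed strictly complementary partition $(\Actv,\Iactv)$. Under the assumption that $\optSol$ is the unique nondegenerate solution of~\eqref{eqv:originalPD}, Theorem~\ref{thm-preserving_actv} already tells us that for $\|\lambda\|$ small enough the perturbed problem~\eqref{eqv:perPD} (equivalently~\eqref{eqv:perPD_formPQ}) has a unique strictly complementary solution $\optSolp$ with the \emph{same} active/inactive partition $(\Actv,\Iactv)$, and moreover the proof of that theorem gives closed-form expressions~\eqref{eqv:uniqueSolpyq} for $\hat p$ and $\hat q$. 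So the plan is: first, use~\eqref{eqv:eps} and the uniqueness of the original solution to write $\epsilon(A,b,c) = \min\bigl(\min_{i\in\Iactv} x^*_i,\ \min_{i\in\Actv} s^*_i\bigr)$; second, use~\eqref{eqv:eps_hat} together with Theorem~\ref{thm-preserving_actv} (applied with some fixed reference perturbation direction) and the uniqueness of $\optSolp$ to write $\epsilon(A,b_\lambda,c_\lambda) = \min\bigl(\min_{i\in\Iactv}(\hat p_i),\ \min_{i\in\Actv}(\hat q_i)\bigr)$, where $\hat p_i = x^*_i + \lambda_i + (A_\Iactv^{-1}A_\Actv\lambda_\Actv)_i$ for $i\in\Iactv$ and $\hat q_i = s^*_i + \lambda_i - ((A_\Iactv^{-1}A_\Actv)^T\lambda_\Iactv)_i$ for $i\in\Actv$. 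Third, compare the two minima componentwise.

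The heart of the argument is then a perturbation estimate on these formulas. Write $\lambda = \alpha\bar\lambda$ with $\bar\lambda > 0$ a fixed reference vector and $\alpha\in(0,1)$ a scalar. For $i\in\Iactv$ we have $\hat p_i = x^*_i + \alpha\bigl(\bar\lambda_i + (A_\Iactv^{-1}A_\Actv\bar\lambda_\Actv)_i\bigr)$, an affine function of $\alpha$ whose value at $\alpha=0$ is exactly $x^*_i$; similarly $\hat q_i = s^*_i + \alpha\bigl(\bar\lambda_i - ((A_\Iactv^{-1}A_\Actv)^T\bar\lambda_\Iactv)_i\bigr)$ for $i\in\Actv$. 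The idea is to choose $\bar\lambda$ so that each of these linear coefficients is strictly positive — e.g.\ take $\bar\lambda$ with $\bar\lambda_\Actv = 0$ (or extremely small relative to $\bar\lambda_\Iactv$) so that for $i\in\Iactv$ the coefficient is $\bar\lambda_i > 0$, while for $i\in\Actv$ the coefficient is $\bar\lambda_i - ((A_\Iactv^{-1}A_\Actv)^T\bar\lambda_\Iactv)_i$, which can be made positive by choosing $\bar\lambda_\Actv$ componentwise larger than $((A_\Iactv^{-1}A_\Actv)^T\bar\lambda_\Iactv)$ while keeping $\|\bar\lambda\|$ small. (If $\bar\lambda_\Actv\neq 0$ the primal coefficient acquires the term $(A_\Iactv^{-1}A_\Actv\bar\lambda_\Actv)_i$; since this can be made small in norm relative to $\bar\lambda_i$ by scaling, positivity is retained.) With all these linear-in-$\alpha$ coefficients strictly positive, each $\hat p_i$ (resp.\ $\hat q_i$) is strictly increasing in $\alpha$ on $[0,1)$, hence strictly exceeds its value $x^*_i$ (resp.\ $s^*_i$) at $\alpha=0$ for every $\alpha\in(0,1)$. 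Taking the minimum over $i\in\Iactv$ and $i\in\Actv$ and then the outer minimum preserves the strict inequality (a finite minimum of finitely many strict inequalities), giving $\epsilon(A,b_\lambda,c_\lambda) > \epsilon(A,b,c)$.

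Finally, I would pin down the size of $\bar\lambda$: it must be small enough that (i) Theorem~\ref{thm-preserving_actv} applies, i.e.\ $\|\lambda\| = \alpha\|\bar\lambda\| < \hat\lambda(A,b,c,x^*)$ for all $\alpha\in(0,1)$, so one needs $\|\bar\lambda\| \le \hat\lambda$, and (ii) the positivity of the linear coefficients above holds, which is a condition on the \emph{direction} of $\bar\lambda$ only and is unaffected by further shrinking its norm. Setting $\bar\lambda$ to be such a sufficiently small vector, and noting that $\lambda = \alpha\bar\lambda$ with $\alpha\in(0,1)$ ranges over exactly the set $\{0 \le \lambda = \alpha\bar\lambda < \bar\lambda\}$ claimed in the statement, completes the proof. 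The main obstacle is the bookkeeping in choosing $\bar\lambda$ so that \emph{both} the $\Iactv$-coefficients and the $\Actv$-coefficients are simultaneously positive while respecting the norm bound from Theorem~\ref{thm-preserving_actv} — in particular, handling the coupling term $A_\Iactv^{-1}A_\Actv\lambda_\Actv$ that links the two blocks — but this is a matter of choosing $\bar\lambda_\Actv$ appropriately (dominant over a matrix-times-$\bar\lambda_\Iactv$ term but still small in norm), not a deep difficulty.
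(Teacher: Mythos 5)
Your first half matches the paper's argument: under the uniqueness/nondegeneracy assumption you reduce both quantities to the explicit formulas coming from~\eqref{eqv:uniqueSolpyq}, so that $\epsilon(A,b_\lambda,c_\lambda)>\epsilon(A,b,c)$ follows once one exhibits $\bar\lambda\geq 0$ (small in norm, which is harmless by homogeneity) satisfying simultaneously $\bar\lambda_{\Iactv}+A^{-1}_{\Iactv}A_{\Actv}\bar\lambda_{\Actv}>0$ and $\bar\lambda_{\Actv}-(A^{-1}_{\Iactv}A_{\Actv})^{T}\bar\lambda_{\Iactv}>0$, i.e.\ the system~\eqref{eqv:epsilon}. (A side remark: you invoke ``uniqueness of $\optSolp$'', which Theorem~\ref{thm-preserving_actv} does not assert; the paper proves it separately in Theorem~\ref{thm-perturbedProbHasUniqueSol}, though for the inequality you actually only need that $\optSolp$ lies in the perturbed solution set, since~\eqref{eqv:eps_hat} takes a supremum.)

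The genuine gap is the solvability of that coupled system, which you dismiss as bookkeeping. Your recipe --- fix $\bar\lambda_{\Iactv}$, take $\bar\lambda_{\Actv}$ componentwise larger than $(A^{-1}_{\Iactv}A_{\Actv})^{T}\bar\lambda_{\Iactv}$, and argue the induced term $A^{-1}_{\Iactv}A_{\Actv}\bar\lambda_{\Actv}$ can be made ``small by scaling'' --- does not work, because both inequalities are homogeneous of degree one in $(\bar\lambda_{\Iactv},\bar\lambda_{\Actv})$: scaling the whole vector changes nothing, shrinking $\bar\lambda_{\Actv}$ alone destroys the dual inequality, and enlarging it can destroy the primal one when $A^{-1}_{\Iactv}A_{\Actv}$ has entries of mixed sign. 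Concretely, with $M=A^{-1}_{\Iactv}A_{\Actv}=(3,\,-1)^{T}$ and your natural choice $\bar\lambda_{\Iactv}=e$, the dual inequality forces $\bar\lambda_{\Actv}>2$, and then the second primal component is $1-\bar\lambda_{\Actv}<0$; no rescaling repairs this, and a remedy requires changing the \emph{direction} of $\bar\lambda_{\Iactv}$ (e.g.\ making its first component small), which your construction never does and which is not obviously possible in general when several columns with mixed sign patterns interact. This existence statement is precisely the nontrivial content of the lemma: the paper devotes Lemmas~\ref{lem-FarkasLemma}--\ref{lem-predictSoonerLemma5} (a Farkas'-lemma argument, in essence Tucker's existence theorem for the skew-symmetric system $u+Mv>0$, $v-M^{T}u>0$, $(u,v)\geq 0$) to proving it, and only then scales the resulting solution below $\hat\lambda$ using homogeneity. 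Without an argument of this type your proof is incomplete at its decisive step.
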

The proof of this lemma is given in Appendix~\ref{sec-compare_eps}.

\begin{theorem}
\label{thm-mu_max_lambda_g_mu_max}
In the conditions of Theorem~\ref{thm-predicted_A_equals_Actual_A}, assume~\eqref{asm-full_row_rank} holds and~\eqref{eqv:originalPD} has a unique and nondegenerate solution $\optSol$. 
Provided
\begin{equation}
\label{eqv:epsilon_min_psi_p_tau_p}
		 \epsilon(A,b, c)  \leq  \mathcal{O} \left( \sqrt{n\rho} \min  \left(  \sqrt{ \rho }, 1 \right)  \right),
\end{equation}
where $\rho$ is defined in~\eqref{eqv:rho}, 
there exists a sufficiently small $ \bar{\lambda}(A,b,c,x^{*},s^{*})>0$ 
such that
\[
\mu^{\max}_{\lambda} > \mu^{\max},
\] 
for all $0<\lambda = \alpha \bar{\lambda}<\bar{\lambda}$, where $\alpha \in (0,1)$ and where
$\mu^{\max}_{\lambda}$ is defined in~\eqref{eqv:mupmax} and $\mu^{\max}$ in~\eqref{eqv:mu_max}.
\end{theorem}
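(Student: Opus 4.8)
The plan is to chain two comparisons through the intermediate threshold $\bar{\mu}^{\max}_{\lambda}$ of Lemma~\ref{lem-wright_lemma_to_prediction}, i.e.\ to establish $\mu^{\max}_{\lambda} > \bar{\mu}^{\max}_{\lambda} > \mu^{\max}$ for $\lambda$ sufficiently small. The right-hand inequality is essentially free: by~\eqref{eqv:mu_bar_max_p} and~\eqref{eqv:mu_max}, $\bar{\mu}^{\max}_{\lambda} = \epsilon^{2}(A,b_{\lambda},c_{\lambda})\gamma/n^{2}$ and $\mu^{\max} = \epsilon^{2}(A,b,c)\gamma/n^{2}$, so $\bar{\mu}^{\max}_{\lambda} > \mu^{\max}$ reduces to $\epsilon(A,b_{\lambda},c_{\lambda}) > \epsilon(A,b,c)$, which is exactly Lemma~\ref{lem-hat_epsilon_g_epsilon} (whose hypotheses --- \eqref{asm-full_row_rank} and a unique nondegenerate~\eqref{eqv:originalPD} solution --- are in force here) for all $0 \le \lambda = \alpha\bar{\lambda}_{1} < \bar{\lambda}_{1}$ with $\bar{\lambda}_{1}>0$ supplied by that lemma. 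The left-hand inequality $\mu^{\max}_{\lambda} > \bar{\mu}^{\max}_{\lambda}$ is, word for word, the conclusion of Theorem~\ref{thm-mu_max_lambda_g_hat_mu_max_lambda}, whose only hypothesis is the size restriction~\eqref{eqv:hat_mu_max_g_mu_max} on $\epsilon(A,b_{\lambda},c_{\lambda})$.

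So the steps, in order, would be: (i) apply Lemma~\ref{lem-hat_epsilon_g_epsilon} to fix $\bar{\lambda}_{1}$ and obtain $\bar{\mu}^{\max}_{\lambda} > \mu^{\max}$; (ii) check that the size restriction~\eqref{eqv:hat_mu_max_g_mu_max} on the \emph{perturbed} quantity $\epsilon(A,b_{\lambda},c_{\lambda})$ holds once $\lambda$ is small, using that $\lambda \mapsto \epsilon(A,b_{\lambda},c_{\lambda})$ is continuous at $\lambda=0$ with value $\epsilon(A,b,c)$ (a quantitative $\epsilon(A,b_{\lambda},c_{\lambda}) = \epsilon(A,b,c)+\mathcal{O}(\|\lambda\|)$ version being extractable from the proof of Lemma~\ref{lem-hat_epsilon_g_epsilon}), together with hypothesis~\eqref{eqv:epsilon_min_psi_p_tau_p} and the $\lambda$-independence of $\rho$, $n$ and the hidden constant, yielding a $\bar{\lambda}_{2}>0$; (iii) invoke Theorem~\ref{thm-mu_max_lambda_g_hat_mu_max_lambda} for $\lambda$ in that range to get $\mu^{\max}_{\lambda} > \bar{\mu}^{\max}_{\lambda}$. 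Taking $\bar{\lambda}$ to be the minimum of $\bar{\lambda}_{1}$, $\bar{\lambda}_{2}$ and the $\lambda$-smallness thresholds already demanded by the invoked conditions of Theorem~\ref{thm-predicted_A_equals_Actual_A}, chaining (i) and (iii) gives $\mu^{\max}_{\lambda} > \mu^{\max}$ for all $0<\lambda = \alpha\bar{\lambda}<\bar{\lambda}$, $\alpha\in(0,1)$.

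The main obstacle is step (ii): Lemma~\ref{lem-hat_epsilon_g_epsilon} only tells us $\epsilon(A,b_{\lambda},c_{\lambda})$ is \emph{above} $\epsilon(A,b,c)$, so a priori it might exceed the $\mathcal{O}$-threshold in~\eqref{eqv:hat_mu_max_g_mu_max}, and one genuinely needs the first-order control on $\epsilon(A,b_{\lambda},c_{\lambda})-\epsilon(A,b,c)$ coming out of the appendix proof of Lemma~\ref{lem-hat_epsilon_g_epsilon} (which, under uniqueness/nondegeneracy, follows from continuity of the primal and dual solutions of~\eqref{eqv:perPD} in $\lambda$). A cleaner alternative that avoids this is to argue directly from~\eqref{eqv:mupmax}: since $t\mapsto t/(\sqrt{n}+t)$ is strictly increasing on $(0,\infty)$ and $\epsilon(A,b_{\lambda},c_{\lambda}) > \epsilon(A,b,c)$, we get $\mu^{\max}_{\lambda} > \frac{\psi_{p}}{4n\max(\tau_{p},\tau_{d})}\cdot\frac{\epsilon(A,b,c)}{\sqrt{n}+\epsilon(A,b,c)}$, and this lower bound is $\ge \mu^{\max}$ precisely when $\epsilon^{2}(A,b,c)+\sqrt{n}\,\epsilon(A,b,c)\le n\rho/(4\gamma)$, which~\eqref{eqv:epsilon_min_psi_p_tau_p} forces after a one-line estimate once the hidden constant $c$ is taken small enough (e.g.\ $c \le 1/5$, so that $c+c^{2}\le 1/4 < 1/(4\gamma)$). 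Either route needs nothing beyond Lemma~\ref{lem-hat_epsilon_g_epsilon} and elementary manipulation.
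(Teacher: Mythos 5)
Your proposal is correct, and the ``cleaner alternative'' you sketch at the end is in fact the paper's own proof: the paper applies Theorem~\ref{thm-mu_max_lambda_g_hat_mu_max_lambda} with $\lambda=0$ (this is where hypothesis~\eqref{eqv:epsilon_min_psi_p_tau_p} is used) to obtain $\mu^{\max} < \frac{\rho\,\epsilon(A,b,c)}{4n\left(\sqrt{n}+\epsilon(A,b,c)\right)}$, and then invokes Lemma~\ref{lem-hat_epsilon_g_epsilon} together with the (implicit) strict monotonicity of $t\mapsto t/(\sqrt{n}+t)$ to bound this quantity by $\mu^{\max}_{\lambda}$ --- exactly your direct computation from~\eqref{eqv:mupmax}, including the quadratic condition $\epsilon^{2}(A,b,c)+\sqrt{n}\,\epsilon(A,b,c)\le n\rho/(4\gamma)$ (your factor $n$ is the right one: the displayed quadratic in the paper's proof of Theorem~\ref{thm-mu_max_lambda_g_hat_mu_max_lambda} omits it, but the root given in~\eqref{eqv:bound_hat_eps_full} is consistent with your version, and your choice of hidden constant makes the one-line estimate go through). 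Your primary route, chaining $\mu^{\max}_{\lambda}>\bar{\mu}^{\max}_{\lambda}>\mu^{\max}$, is genuinely different from the paper and also workable, but only after the patch you yourself flag: Theorem~\ref{thm-mu_max_lambda_g_hat_mu_max_lambda} needs the smallness condition~\eqref{eqv:hat_mu_max_g_mu_max} on the \emph{perturbed} quantity $\epsilon(A,b_{\lambda},c_{\lambda})$, which~\eqref{eqv:epsilon_min_psi_p_tau_p} does not supply directly; one must extract the quantitative control $\epsilon(A,b_{\lambda},c_{\lambda})\le\epsilon(A,b,c)+\mathcal{O}(\|\lambda\|)$ from the explicit solution~\eqref{eqv:uniqueSolpyq} used in the appendix (available here because of the unique nondegenerate solution) and take the hidden constant in the hypothesis strictly below the one needed in~\eqref{eqv:hat_mu_max_g_mu_max}, so that the increase can be absorbed for $\lambda$ small. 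The paper's route (and your alternative) sidesteps this entirely because an increase in $\epsilon(A,b_{\lambda},c_{\lambda})$ only \emph{increases} $\mu^{\max}_{\lambda}$, so no upper bound on the perturbed quantity is ever required; that is what it buys over the chaining argument, at the cost of not passing through the more interpretable intermediate threshold $\bar{\mu}^{\max}_{\lambda}$.
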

\begin{proof}
Applying Theorem~\ref{thm-mu_max_lambda_g_hat_mu_max_lambda} with $\lambda = 0$ and so replacing $\epsilon(A,b_{\lambda}, c_{\lambda}) $ with $ \epsilon(A,b,c)$, we deduce
\[
	\mu^{\max} < \frac{\rho\epsilon(A,b, c)}{4 n \left( \sqrt{n} + \epsilon(A,b, c) \right)}.
\] 
From Lemma~\ref{lem-hat_epsilon_g_epsilon}, we have $\epsilon(A,b_{\lambda},c_{\lambda}) > \epsilon(A,b,c)$. This and the definition of $\mu^{\max}_{\lambda}$ in~\eqref{eqv:mupmax}~give
\[
 \frac{\rho\epsilon(A,b, c)}{4 n \left( \sqrt{n} + \epsilon(A,b, c) \right)} < \mu^{\max}_{\lambda}.  
\]
\qed \end{proof}

Theorem~\ref{thm-mu_max_lambda_g_mu_max} implies that if $\epsilon(A,b, c)$ is sufficiently small, we may  find the optimal active set of~\eqref{eqv:originalPD} `sooner' if we solve~\eqref{eqv:perPD} using a primal-dual path-following {\ipm} than if we solve~\eqref{eqv:originalPD}.

\Remark
When~\eqref{eqv:originalPD} has a unique solution $\optSol$, we have 
\[
	\epsilon(A,b,c) = \min\left(\,\,\min_{i\in \Iactv} x^{*}_{i}, \,\,
\min_{i\in \Actv} s^{*}_{i}   \right) \leq \min_{i\in \Iactv} x^{*}_{i} = \psi_{p}.
\]
Note that according to~\cite{Mangasarian}  $\tau_{p}, \tau_{d} = \mathcal{O}(1)$ numerically. Thus provided $\psi_p > 1$ or $n$ is sufficiently large,~\eqref{eqv:epsilon_min_psi_p_tau_p} is satisfied.
We illustrate this in an example next.
 
\paragraph*{A simple example of predicting the optimal~\eqref{eqv:originalPD} active set using perturbations.}To~illustrate our results in this section, consider the following simple example
\begin{equation}
\label{eqv:example}
	\min \,\,x_{1} + 2x_{2} \quad \text{subject to} \quad x_{1} + x_{2} = 1,\,\, x_{1} \geq 0,\,\, x_{2} \geq 0,
\end{equation}
with the optimal solution $ x^{*} = (1,0)$ and $y^{*} = 1 $, $s^{*} = (0,1)$.\hspace*{2.35mm}Thus~\eqref{eqv:example} has a unique and primal-dual nondegenerate solution with optimal active set $\Actv(x^{*}) = \{2\}$, and so $\psi_{p} = \epsilon(A,b,c) = 1$.  
Let the vector of perturbations be $\lambda = \alpha  (1, 5)$ where $\alpha  = 10^{-2}$. The perturbed problems~\eqref{eqv:perPD} also have a unique solution $x^{*}_{\lambda} = (1+5\alpha, -5\alpha)$, $y^{*}_{\lambda} = 1+\alpha$ and $s^{*}_{\lambda} = (-\alpha, 1-\alpha)$. 
So $\epsilon(A,b_{\lambda},c_{\lambda})$ $ = \min\left( 1+6\alpha, 1+4\alpha \right) = 1 + 4\alpha = 1.04 $.

First we verify the conditions in Theorem~\ref{thm-predicted_A_equals_Actual_A}, which are needed in both Theorems~\ref{thm-mu_max_lambda_g_hat_mu_max_lambda} and~\ref{thm-mu_max_lambda_g_mu_max}. Since it is not clear how to deduce the value of  $\tau_p$ and $\tau_d$,  we estimate them numerically\footnote{%
We estimate $\tau_p$ and $\tau_d$ from their definition in~\eqref{eqv:iter_error_bound}, namely, we solve the following optimisation problem in {\sc matlab},
$ \max \|x-x^{*}\| \slash (r(x,s)+w(x,s))$ subject to $(x,y,s) \in \sfsp$,
where $r(x,s)$ and $w(x,s)$ are defined in~\eqref{eqv:error_bounds_feasible_x_s} and $\sfsp$ in~\eqref{eqv:strictlyFeasibleSet_per}; 
similarly for $\tau_d$. 
} and it turns out that $\tau_{p} \approx \tau_{d} \approx 0.8$. We set the cut-off constant $C$ that separates the active and inactive constraints to be $C = \frac{ \psi_{p}}{2}  = 0.5$ and verify that $\|\lambda\| = \sqrt{26}\alpha <  \frac{\psi_{p}}{16\max(\tau_{p},\tau_{d})}< 1$. Thus the conditions in~\eqref{eqv:thm_bound_lambda_value_C_primal} are satisfied. Based on Theorem~\ref{thm-predicted_A_equals_Actual_A}, we can predict the original optimal active set when $\mup$ is less than $\mup^{\max} 
\approx 0.0662 $.  

Next we verify Theorems~\ref{thm-mu_max_lambda_g_hat_mu_max_lambda} and~\ref{thm-mu_max_lambda_g_mu_max}.
From~\eqref{thm-mu_max_lambda_g_hat_mu_max_lambda}, we get $\rho \approx 1.25$, and so $\sqrt{n\rho} \min  \left(  \sqrt{\rho}, 1  \right)  \approx 1.58$. Thus  $0<\epsilon(A,b,c) < \epsilon(A,b_{\lambda},c_{\lambda}) < \sqrt{n\rho} \min  \left(  \sqrt{\rho}, 1  \right)$, which implies that conditions~\eqref{eqv:hat_mu_max_g_mu_max} and~\eqref{eqv:epsilon_min_psi_p_tau_p} are satisfied.
For the constant $\gamma$, it is common to choose a small value to have a large neighbourhood of the central path; set $\gamma = 0.01$. Then from~\eqref{eqv:mu_bar_max_p} and~\eqref{eqv:mu_max}, we have $\bar{\mu}^{\max}_{\lambda}  \approx 0.0027 < \mup^{\max} $ and $\mu^{max} = 0.0025 < \mup^{\max} $. This implies that when we use perturbations, we can predict the original optimal active set sooner than the perturbed active set or the original active set without perturbations. Furthermore, the threshold values (constant $C$) needed to separate the active constraints from the inactive ones for predicting the perturbed active set and the original active set without perturbations are 0.0052 and 0.005 respectively, both of which are much smaller than the cut-off $C = \frac{ \psi_{p}}{2}  = 0.5$ for predicting the original optimal active set using~perturbations.

%%%%%%%%%%% Section: Numerical results %%%%%%%%%%%%%%

\section{Numerical results}
\label{numeric}
%\resetcounters

\subsection{The perturbed algorithm and its implementation}
\label{Implementation}
All numerical experiments in this section employ an infeasible primal-dual path-following interior point method structure~\cite[Chapter 6]{wright} whether applied to~\eqref{eqv:perPD} or~\eqref{eqv:originalPD}. 
The perturbed algorithm is summarised in Algorithm~\ref{alg:perAlg}.

\algo{alg:perAlg}{%
Perturbed Algorithm Framework.}{%

\textbf{Given} perturbations $(\lambda^{0}, \phi^{0})>0$ and a starting point $(x^0,y^0,s^0)$ with $(x^0 + \lambda^{0}, s^0 + \phi^{0}) > 0$,
\textbf{for} $k = 0,1,2,\ldots$
\begin{itemize}\itemsep2pt \parskip1pt \parsep0pt
	\item[] \textbf{solve} the perturbed system~\eqref{eqv:perturbedCentralPath} using Newton's method, namely
	\begin{equation}
	\label{eqv:perNewton_practical}
	\begin{bmatrix}
	A & 0       &  0  \\
	0 & A^{T} & I   \\
	S^{k} + \Phi^{k} & 0 & X^{k} + \Lambda^{k} 
	\end{bmatrix}
	\begin{bmatrix}
	\Delta x^{k} \\
	\Delta y^{k} \\
	\Delta s^{k}
	\end{bmatrix}
	= - 
	\begin{bmatrix}
	\,\, Ax^{k}-b\,\,  \\
	\,\, A^{T}y^{k} +s^{k} - c\,\,    \\
	\,\,  \left( X^{k}+\Lambda^{k} \right) \left( S^{k} + \Phi^{k} \right)e -  \sigma^{k}\mup^{k}e \,\,
	\end{bmatrix},
	\end{equation}
	\hspace*{8ex}where $\sigma^{k} \in [ 0 ,1 ]$ and 
	\begin{equation}
	\label{eqv:mupk}
		\mup^{k} = \frac{(x^{k}+\lambda^{k})^{T}(s^{k}+\phi^{k})}{n};
	\end{equation}
	\item[] \textbf{set} $ x^{k+1} = x^{k} + \alpha^{k}_{p} \,\Delta x^{k}$ and $(y^{k+1},s^{k+1}) = (y^{k} ,s^{k}) + \alpha^{k}_{d} \,( \Delta y^{k}, \Delta s^{k})$,  where  
	\item[] \hspace*{8ex}$( \alpha_{p}^{k}, \alpha_{d}^{k} )$ is chosen such that $(\, x^{k+1} + \lambda^{k},\,s^{k+1} + \lambda^{k}\,) > 0$;
	\item[] \textbf{predict} the optimal active set of~\eqref{eqv:originalPD} and denote by $\Actv^{k}$;
	\item[] \textbf{terminate} if some termination criterion is satisfied;
	\item[] \textbf{calculate} $(\lambda^{k+1}, \phi^{k+1})$ possibly by shrinking $(\lambda^{k}, \phi^{k})$ so that 
	\item[] \hspace*{8ex}$(x^{k+1} + \lambda^{k+1}, s^{k+1} + \phi^{k+1}) >0$;
\end{itemize}
\textbf{end (for)}.
}

\paragraph{Algorithm without perturbations.}
For comparison purposes, we refer to the algorithm with no perturbations (Algorithm~\ref{alg:perAlg} with $\lambda = \phi = 0$) as \textbf{{\refstepcounter{algo}\label{alg:unperAlg}}Algorithm {\thealgo}}.
We denote the duality gap for Algorithm~\ref{alg:unperAlg} as $\mu^{k}$, which is equivalent to $\mup^{k}$ in~\eqref{eqv:mupk} with $\lambda^{k} = \phi^{k} = 0$.

\paragraph{Starting point.}
We use the starting point proposed by Mehrotra for~\eqref{eqv:originalPD}~\cite[Section 7]{mehrotra} as  starting point for both Algorithms~\ref{alg:perAlg} and~\ref{alg:unperAlg}.
%\footnote{%
(We have also tested the case when Algorithm~\ref{alg:perAlg} is initialised from Mehrotra's starting point for~\eqref{eqv:perPD}. This change did not affect our results in any significant way, suggesting some level of robustness.) 
%}

\paragraph{Solving the Newton system~\eqref{eqv:perNewton_practical}.}
We follow~\cite[Chapter 11]{wright} and solve the augmented system form of~\eqref{eqv:perNewton_practical}. Also we set $\sigma^{k} = \min(0.1, 100\mup^{k})$.

\paragraph{Choice of perturbations.}
In our theory, we used the same vector of perturbations for both primal and dual variables. For better numerical efficiency, we have different perturbations $\lambda$ and $\phi$ for primal and dual variables respectively. We set the initial perturbations to be $\lambda^{0} = \phi^{0} = 10^{-2} e$, where $e$ is a vector of ones. (We have done experiments to explore the sensitivity of our algorithm to the value of the initial perturbations. 
For example, choosing  $\lambda^{0} = \phi^{0} = 10^{-1}e$  yields a high false-prediction ratio (proportion of mistakes). Perturbations of order $10^{-2}$ and $10^{-3}$ yield quickly a good approximation of the original~\eqref{eqv:originalPD} active set. For $\lambda^{0} = \phi^{0} = 10^{-4}e$, the perturbed algorithm starts to behave similarly to the unperturbed one simply because the perturbations are too small.)

\paragraph{Choice of stepsize.}
We choose a fixed, close to 1, fraction of the stepsize to the nearest constraints' boundary in the 
primal and dual spaces, respectively.

\paragraph{Shrinking the perturbations.}
One possible reason for getting a poor prediction of the active set is that the current perturbations are too large.   
So after we get the new iterate $(x^{k+1},y^{k+1},s^{k+1})$, we shrink the perturbations accordingly.
Assume $t^{k+1} = \min(x^{k+1})$ and $v^{k+1} = \min(s^{k+1})$. We update the perturbations as follows,
\begin{equation}
\label{eqv:updatePersAlg4_x}
    \quad \lambda^{k+1} =
        \begin{cases}
            \,\, \eta \lambda^{k},   &  \text{if $t^{k+1} >0$}\\
            \,\, (1-\zeta)\lambda^{k} +\zeta(-t^{k+1} )e, &  \text{if $t^{k+1} \leq0$}\\
            \end{cases}
            ,\quad
\nonumber
\end{equation}
and
\begin{equation}
\label{eqv:updatePersAlg4_s}
    \quad \phi^{k+1} =
        \begin{cases}
            \,\, \eta \phi^{k},   &  \text{if $v^{k+1} > 0$}\\
            \,\, (1-\zeta)\phi^{k} + \zeta(-v^{k+1} ) e , &  \text{if $v^{k+1} \leq0$}\\
            \end{cases}
            ,\quad
\nonumber
\end{equation}
where $\eta \in (0,1]$ and  $\zeta \in (0,1)$. It follows that  $ x^{k+1} + \lambda^{k+1} > 0 $ and $ s^{k+1} +\phi^{k+1} > 0. $ We observed in our numerical experiments that when solving nondegenerate problems, it is better to shrink faster, roughly keeping the perturbations to be $\mathcal{O} (\mup)$. When solving degenerate problems however, it is better to shrink slower, at a rate of $\mathcal{O} (\sqrt{\mup})$.
It is difficult and often impossible to distinguish a priori between degenerate and nondegenerate cases. After several numerical trials, we chose to set $\eta = 1$ and $\zeta = 0.5$. 

\paragraph{Active-set prediction.}
In our theory, we considered that all variables less than a threshold are active at the solution. In practice, we apply a more complex strategy, inspired by~\cite[Step 3 in Procedure 8.1]{bakry}.   
We partition the index set $\{ 1,2,\ldots,n  \}$ into three sets, $\Actv^{k}$ as the predicted active set, $\Iactv^{k}$ as the predicted inactive set and $\Nactv^{k} = \{ 1,2,\ldots,n  \} \backslash \left( {\Actv}^{k} \cup {\Iactv}^{k}\right)$ which includes all undetermined indices, and
during the running of the algorithm, we move indices between these sets according to the following criteria,
\begin{equation}
\label{eqv:actvPredctImplmtnCriteria}
    x^{k}_{i} < C
    \quad\text{and}\quad
    s^{k}_{i} > C,
\end{equation}
where $C$ is a constant user-defined threshold.
Theorem~\ref{thm-bound_active_set} guarantees the above criteria~\eqref{eqv:actvPredctImplmtnCriteria} are promising, as we are predicting the original optimal active set by estimating the intersection of $\psas(s^{k}_{i})$ and $\pacs(x^{k}_{i})$.
Initialise $\Actv^{0} = \Iactv^{0} = \emptyset$ and $\Nactv^{0} = \{  1,2,\dots,n \}$.
An index is moving from $\Nactv^{k}$ to $\Actv^{k}$ if \eqref{eqv:actvPredctImplmtnCriteria} is satisfied for two consecutive  iterations, otherwise from $\Nactv^{k}$ to $\Iactv^{k}$.
We move an index from $\Actv^{k}$ to $\Nactv^{k}$ if \eqref{eqv:actvPredctImplmtnCriteria} is not satisfied at the current iteration.
An index is moving from $\Iactv^{k}$ to $\Nactv^{k}$ if \eqref{eqv:actvPredctImplmtnCriteria} is satisfied at the current iteration.
In our implementation, we choose $C = 10^{-5}$.
Procedure~\ref{alg:actvPrediction} in
Appendix~\ref{apd:ActivesetPredictionProcedure} contains a pseudocode
of our active-set prediction technique.
Our strategy enables us to make use of both primal and dual information which may be beneficial given Theorems~\ref{thm-bound_active_set}--\ref{thm-predicted_A_plus_equals_Actual_A_plus}. 

\paragraph{Termination.} Termination criteria will be defined for each set of tests.

\subsection{Numerical results}
\label{subsec-num_results}
\subsubsection{Test problems}
\label{subsub-test_probs}
\paragraph{Randomly generated test problems (TS1).}\label{test-random_set}
We first randomly generate the number of constraints $m \in (10,200) $, the number of variables $n \in (20,500) $ and density of nonzero entries in $ A $ within $ (0.4,0.8)$, where $m < n$, $2m < n < 7m$. Then randomly generate a matrix $A \in \Real^{m\times n}$ of given density and a point $\pntp \in \Real^{n}\times\Real^{m}\times\Real^{n}$ with $x \geq 0$, $s \geq 0$ and density about 0.5. Finally we generate $b$ and $c$ by letting $b = Ax$ and $c = A^{\top}y+s$. Thus $(x,y,s)$ serves as a feasible point. Problems generated this way are generally well-conditioned and primal nondegenerate.
%\footnote{%
%We checked the degeneracy of 100 test problems generated this way, by looking at the vertex solutions obtained by the {\sc matlab} simplex solver.
%Majority of these problems are dual degenerate.} 
This test set is inspired by the random problem generation approach in~\cite[Section 8.3.4]{lpmatlab}.Whenever we use this test set, (the same) 100 problems are generated.

\paragraph{Randomly generated primal-dual degenerate test problems (TS2).}\label{test-random_pd_degen_set} 
Instead of generating a feasible point as for {\tspndeg}, we generate $(x,y,s)$ with $x\geq0$, $s\geq0$, $x_{i}s_{i} = 0$ for all $i \in \{1,\ldots,n\}$ so that the number of nonzeros of $x$ is strictly less than $ m$ and that of $s$ is strictly less than $n-m$. Then get $A$, $b$, $c$ as for  {\tspndeg}. Thus $(x,y,s)$ serves as a primal-dual degenerate solution. 100 problems are also generated for this test set.

\paragraph{Netlib problems (TS3).}\label{test-netlib_set}
Most Netlib
%\footnote{%
%Netlib is a collection of standard {\lp} test problems. The original test problems can be found at \url{http://www.netlib.org/lp/data/}. We use a {\sc matlab} version of the same test set, which is obtained from \url{http://www.math.ntu.edu.tw/~wwang/cola_lab/test_problems/netlib_lp/}.
%} 
test problems are not in the standard form. We reformulate them into the standard form by introducing slacks.
Since our implementation is basic, in {\sc matlab}, and mainly for illustration, we choose a subset of problems in Netlib with the number of primal variables less than $5000$ (including the slack variables). See Table~\ref{tab-sumCrossover} for the list of the 37 Netlib problems selected.

\subsubsection{On the accuracy of active-set predictions using prediction ratios}
Assume $\Actv^{k}$ is the predicted active set at iteration $k$ and $\Actv$ is the actual optimal active set. To compare the accuracy of the predictions, we introduce the following three prediction ratios.
\begin{itemize}%[itemsep=-1mm]
\item False-prediction ratio    $ = \frac{|\Actv^{k} \,\setminus\, (\Actv^{k} \,\cap\, \Actv)|}{|\Actv^{k} \,\cup\, \Actv|}$. 
\item Missed-prediction ratio  $ = \frac{|\Actv \,\setminus\, (\Actv^{k} \,\cap\, \Actv)|}{|\Actv^{k} \,\cup\, \Actv|}$.
\item Correction ratio              $ = \frac{| \Actv^{k} \,\cap\, \Actv |}{|\Actv^{k} \,\cup\, \Actv |}$.
\end{itemize}
False-prediction ratio  measures the degree of incorrectly identified active constraints, missed-prediction ratio measures the degree of incorrectly rejected active constraints and correction ratio shows the accuracy of the prediction. 
All three ratios range from 0 to 1. If the predicted set is the same as the actual optimal active set, correction ratio is 1. The main task for this test is to compare the three measures for Algorithms~\ref{alg:perAlg} and~\ref{alg:unperAlg}. 

When an {\lp} problem has multiple solutions, the active set of a vertex solution is different from that of the strictly complementary solutions (about $17\%$ difference on average for {\tspndeg} and $21\%$ for {\tspddeg}). To understand which active set do the (perturbed) Algorithm~\ref{alg:perAlg} and the (unperturbed)~\ref{alg:unperAlg} predict, we terminate both algorithms at the same iteration and compare the predicted active sets with the actual optimal active sets obtained from an interior point solver and a simplex solver\footnote{\label{fnt-actualActv}
We obtain the `actual optimal active set' by solving the problem using {\sc matlab}'s solver {\sc linprog} with the 'algorithm' option set to interior point or simplex and considering all variables less than $10^{-5}$ as active.}.

\paragraph{Prediction ratios for test sets {\tspndeg} and {\tspddeg}. }
In Figures~\ref{fig-cr-nondegen} and~\ref{fig-cr-degen}, we present the results for {\tspndeg} (left) and {\tspddeg} (right). The x-axis shows the number of interior point iterations at which we terminate the algorithms.
In each figure, the first three plots (from left to right, top to bottom) show the average value of the three measures mentioned above for the test problems in question. 
The last plot at the bottom right corner presents the corresponding $\log_{10}$ scaled relative {\kkt} residuals. 
We measure the relative residual~by
\begin{equation}
\label{eqv:rel_resid}
\mbox{relRes}^{k} = \frac{|| \left( Ax^{k} - b, A^{T}y^{k} + s^{k}-c, \left( X^{k}+\Lambda^{k} \right) \left( S^{k} + \Phi^{k} \right)e - \mup^{k}e \right)  ||}{1+\max \left(  || b ||, || c ||  \right)}.
\end{equation}
There are four lines in each plot, representing the prediction ratios by comparing the active set from Algorithm~\ref{alg:perAlg} with that from {\sc matlab}'s simplex solver (solid red line with circle) and from {\sc matlab}'s {\ipm} (solid black line with square sign), and Algorithm~\ref{alg:unperAlg} with simplex (dashed green line with diamond sign) and with {\ipm} (dashed blue line with star) respectively.

\begin{figure}[h]
\fboxsep= 3.5pt%padding thickness
\fboxrule=0.5pt%border thickness
\begin{minipage}[b]{0.45\linewidth}
\centering\fbox{
\includegraphics[width=\textwidth]
{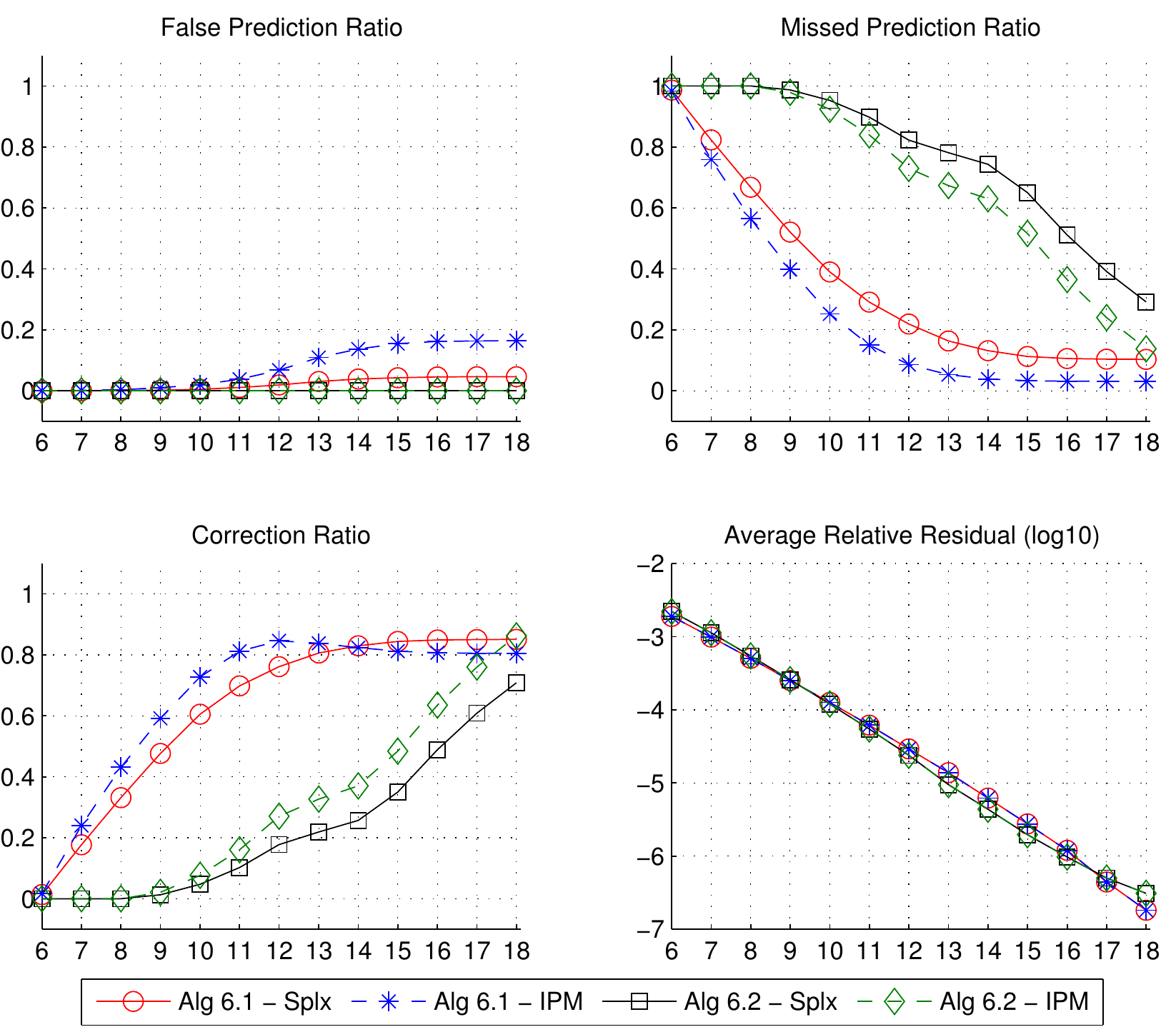} }
\caption{\small Prediction ratios for randomly generated problems}
\label{fig-cr-nondegen}
\end{minipage}
\hspace{0.4cm}
\begin{minipage}[b]{0.45\linewidth}
\centering\fbox{
\includegraphics[width=\textwidth]
{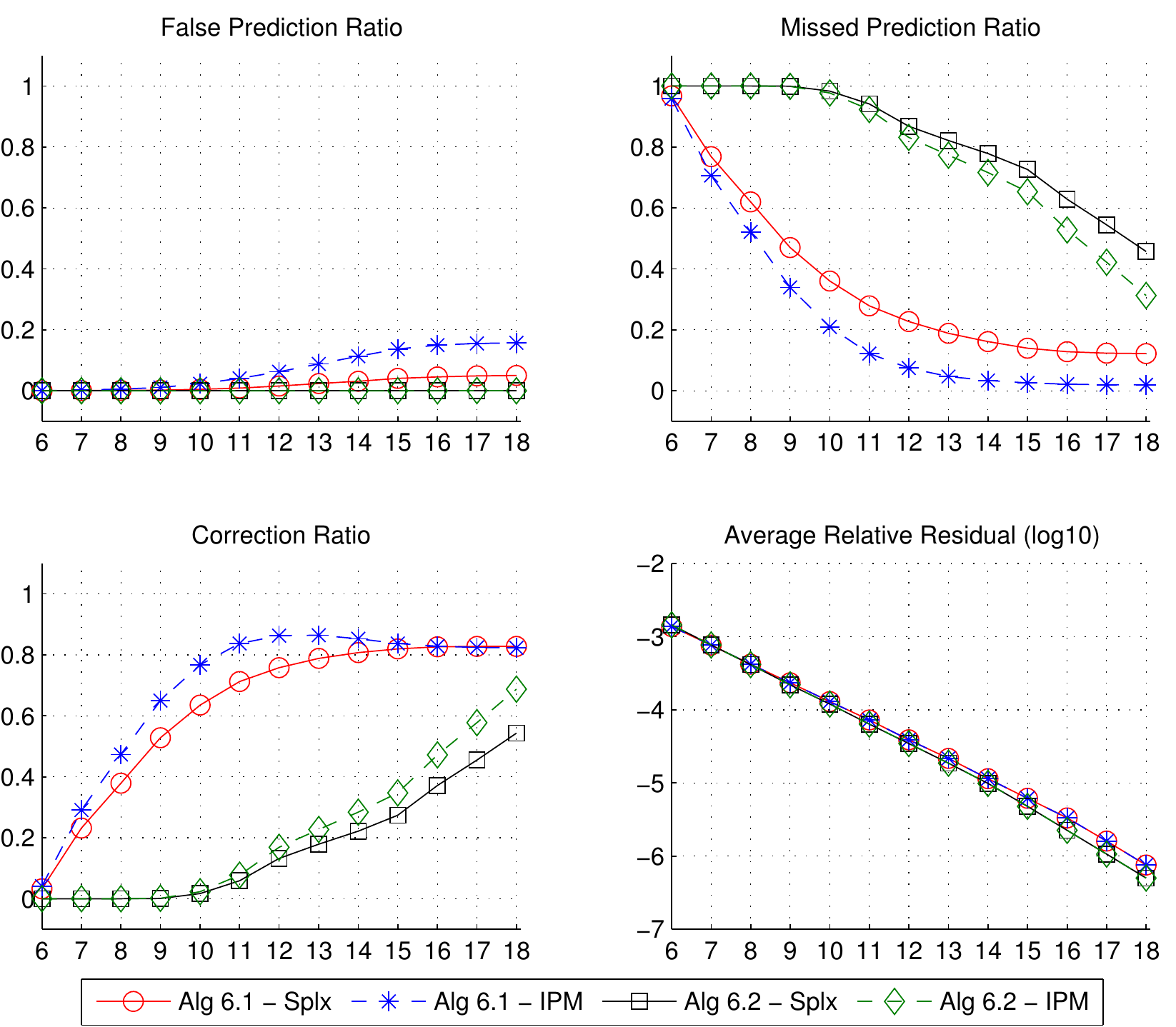} }
\caption{\small Prediction ratios for randomly generated primal-dual degenerate}
\label{fig-cr-degen}
\end{minipage}
\end{figure}

\begin{itemize}\itemsep1.5pt \parskip1pt \parsep1pt
\item Figures~\ref{fig-cr-nondegen} and~\ref{fig-cr-degen} show that the average correction ratios for Algorithm~\ref{alg:perAlg} are at least as good and generally better than those for  Algorithm~\ref{alg:unperAlg}. Thus it seems that using perturbations can only improve the active-set prediction capabilities of {\ipms}.
\item Algorithm~\ref{alg:unperAlg} is in fact an interior point solver applied to~\eqref{eqv:originalPD} which approaches a strictly complementary~\eqref{eqv:originalPD} solution. This is confirmed by having better correction ratio when comparing Algorithm~\ref{alg:unperAlg} with the {\ipm} than when comparing it with the simplex.
\item Due to the fact that the active set from the {\ipm} (the strictly complementary partition) contains less elements than that from the simplex (vertex solution), the correction ratio of Algorithm~\ref{alg:perAlg} compared with the {\ipm} is higher than that compared with the simplex at the early stage. However the false-prediction ratios of the former climb up to about $0.16$ at the end for both test cases. Thus the corresponding correction ratios go down. The false-prediction ratios of comparing Algorithm~\ref{alg:perAlg} with simplex are much less, about $0.05$ for both cases. The behaviours of the false-prediction ratios seem to imply that Algorithm~\ref{alg:perAlg} predicts the active set of a vertex solution (that may not be the same vertex as obtained by the simplex solver).
\item  After 18 iterations, the correction ratios do not reach 1. This is due to ill-conditioning which prevents us from solving any further. For this 18\textsuperscript{th} iteration,  the perturbations are not zero, they are about $\mathcal{O} (10^{-2})$ for problems in {\tspndeg} and $\mathcal{O} (10^{-3})$ for the degenerate problems in {\tspddeg}, and on average the relative residual~\eqref{eqv:rel_resid} is lower than~$10^{-6}$.
\end{itemize}

\paragraph{Can Algorithm~\ref{alg:perAlg} predict the optimal active set of~\eqref{eqv:originalPD} sooner than it obtains the strictly complementary partition of~\eqref{eqv:perPD}?}\label{exp-predict_sooner_than_scp_per}
In Figures~\ref{fig-cr2-nondegen} and~\ref{fig-cr2-degen}, besides comparing the predicted active set of~\eqref{eqv:originalPD} with the actual active set of~\eqref{eqv:originalPD}, we also compared the predicted active set of~\eqref{eqv:perPD}\footnote{\label{fnt-val-of-pert}
Here, for each of the test problems, we set $\lambda$ in~\eqref{eqv:perPD}   to be the value of the perturbations when
terminating Algorithm~\ref{alg:perAlg} at the $18^{\text{th}}$
iteration. We then apply Algorithm~\ref{alg:unperAlg} to the
equivalent form~\eqref{eqv:perPD_formPQ} of~\eqref{eqv:perPD}, which
means we solve the perturbed problem using an {\ipm} method and predict the active set of the perturbed problem on the way.} with the actual active set of~\eqref{eqv:perPD} obtained from a simplex solver (solid purple line with downward-pointing triangle) and an {\ipm} solver (dashed brown line with upward-pointing triangle), respectively; see Footnote~\ref{fnt-actualActv} on the choice of solvers. We again use the test sets {\tspndeg} and {\tspddeg}. 

We can see that on average Algorithm~\ref{alg:perAlg} can predict a better active set for~\eqref{eqv:originalPD} than when applying Algorithm~\ref{alg:unperAlg} to predict the active set of~\eqref{eqv:perPD}. Furthermore, for test case {\tspndeg}, before iteration 12, Algorithm~\ref{alg:unperAlg} cannot predict much concerning the active set of~\eqref{eqv:perPD} while Algorithm~\ref{alg:perAlg} already has an increasingly accurate prediction for the active set of~\eqref{eqv:originalPD} (approximately $80\%$ of the active set of~\eqref{eqv:originalPD} at iterations 12). We can draw similar conclusions for {\tspddeg}. 

\begin{figure}[h]
\fboxsep= 3.5pt%padding thickness
\fboxrule=0.5pt%border thickness
\begin{minipage}[b]{0.455\linewidth}
\centering\fbox{
\includegraphics[width=\textwidth]
{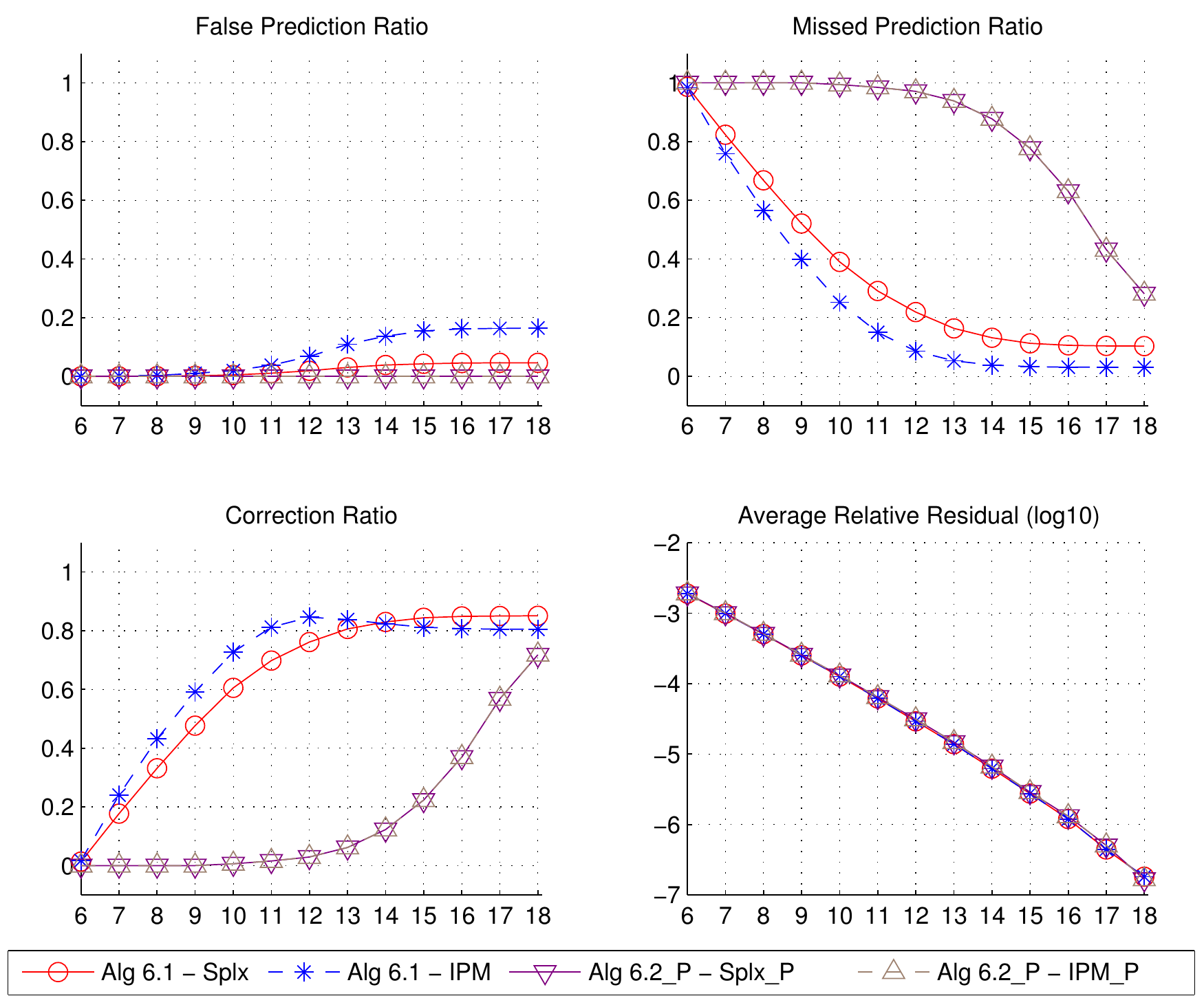} }
\caption{\small Comparing perturbed active-set predictions for (\tspndeg)}
\label{fig-cr2-nondegen}
\end{minipage}
\hspace{0.4cm}
\begin{minipage}[b]{0.455\linewidth}
\centering\fbox{
\includegraphics[width=\textwidth]
{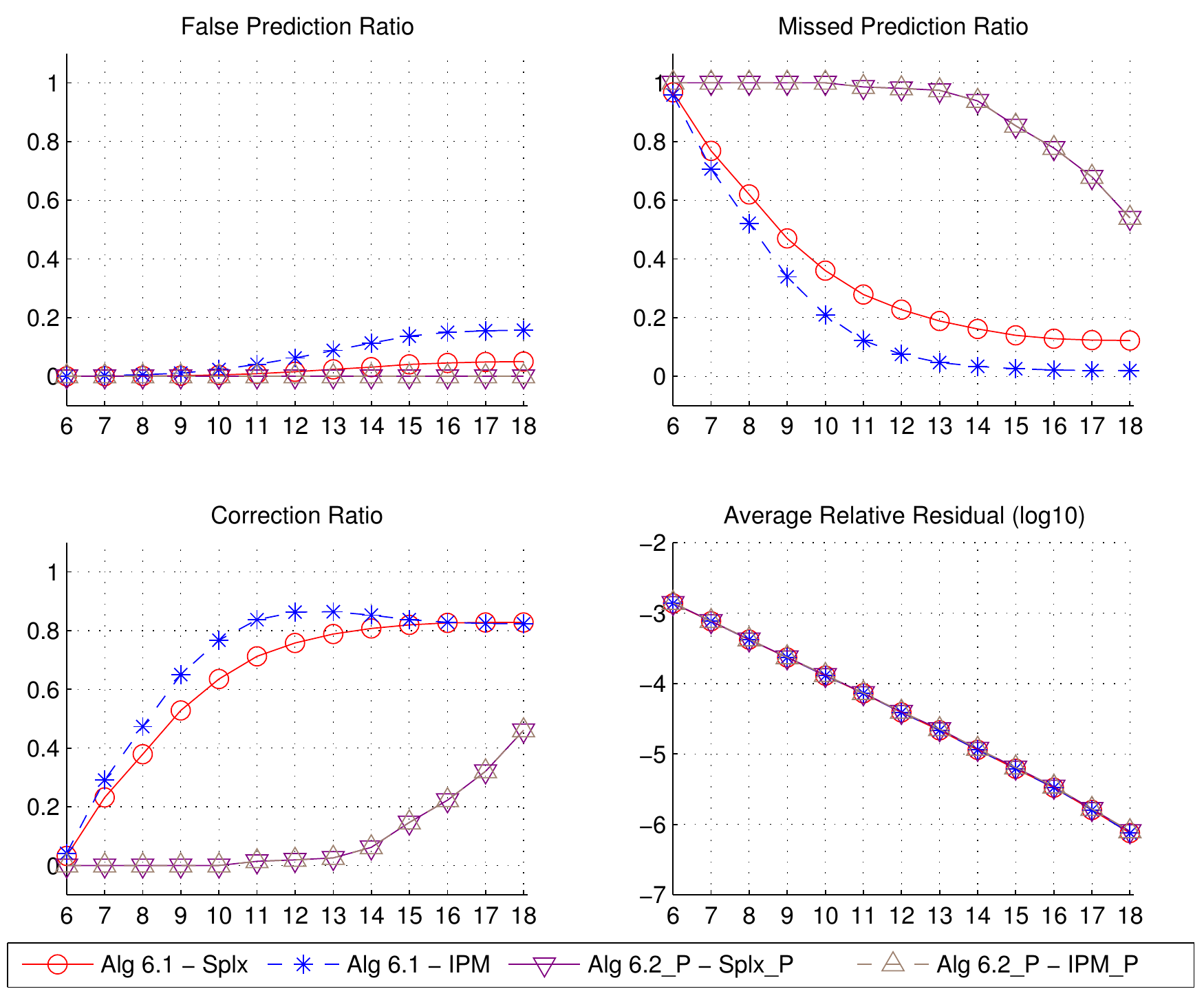} }
\caption{\small Comparing perturbed active-set predictions for (\tspddeg)}
\label{fig-cr2-degen}
\end{minipage}
\end{figure}

\paragraph{On the difference between the optimal active set of~\eqref{eqv:perPD} and that of~\eqref{eqv:originalPD}.}\label{exp-pacyv_diff_oactv}
Note~that, to yield good performance, we do not need to force the
active set of~\eqref{eqv:perPD} (as defined in Footnote \ref{fnt-val-of-pert}) to be the same as the (original) active set of~\eqref{eqv:originalPD}. In fact, for most test problems in both {\tspndeg} and {\tspddeg}, this does not hold. 
When perturbations are not so small, namely $\mathcal{O}( 10^{-2} )$ or $\mathcal{O}( 10^{-3} )$, which is the case even in the last {\ipm} iterations in Figures~\ref{fig-cr2-nondegen} and~\ref{fig-cr2-degen}, the perturbed optimal active set is different from the original optimal active set for $98\%$ of the test problems in {\tspndeg} and all test problems in {\tspddeg}. Furthermore, for problems in {\tspndeg}, the average difference between the strictly complementary partition of~\eqref{eqv:perPD} and that of~\eqref{eqv:originalPD} is as high as $33\%$ and the difference between the active set at a vertex solution of~\eqref{eqv:perPD} and that of~\eqref{eqv:originalPD} is about $15\%$ on average; for {\tspddeg}, the average difference between the strictly complementary partitions of~\eqref{eqv:perPD} and~\eqref{eqv:originalPD} is about $29\%$ and the difference between active sets at vertex solutions is $17\%$ on average. Another interesting observation is that, for both {\tspndeg} and {\tspddeg}, over $90\%$ of the perturbed problems have a unique and nondegenerate solution, regardless of the uniqueness or degeneracy of the original test problems. This is the reason why the predictions of the perturbed active set when comparing with simplex and {\ipm} are identical in Figures~\ref{fig-cr2-nondegen} and~\ref{fig-cr2-degen}.

\paragraph{Prediction ratios for test set {\tsn} (Netlib test
  problems).}

Figure~\ref{fig-correction-ratios-netlib} gives the prediction ratios
for the Netlib test problems in {\tsn}. In contrast to the randomly
generated problems in {\tspndeg} and {\tspddeg}, the number of
iterations required by Netlib test problems to reach sufficient accuracy to allow
meanginful predictions varies significantly from problem to problem
(and so we cannot in general compare the prediction ratios at some
fixed, predefined iterations). Thus
 to test the prediction ratios on  {\tsn} problems, we follow a
 slightly different procedure, inspired by \cite{bakry}. 
%\footnote{A similar methodology was also used in \cite{bakry} to
%compare the performance of different indicators.}
For each  {\tsn} test problem, we first
 solve it to optimality using Algorithm~\ref{alg:unperAlg}, requiring the
relative residual in~\eqref{eqv:rel_resid} (with $\lambda^k=\phi^k=0$) to be less than $10^{-8}$
and we record the total number of iterations needed to reach this accuracy, say $M$.  
Then we  calculate the prediction ratios on this same test problem
for Algorithms~\ref{alg:perAlg} and~\ref{alg:unperAlg} (with {\sc
  matlab}'s simplex and {\ipm} output) over the last $10$ iterations
preceding (and including) the $M^{\text{th}}$ iteration. We then average the
prediction ratios for each algorithm on all {\tsn} test problems at
each of the $M-i$ iterations for $i\in \{0,\ldots,9\}$.
Again, in Figure~\ref{fig-correction-ratios-netlib}, there are four lines in each plot, representing the
prediction ratios by comparing the active set from
Algorithm~\ref{alg:perAlg} with that from {\sc matlab}'s simplex
solver (solid red line with circle) and from {\sc matlab}'s {\ipm}
(solid black line with square sign), and Algorithm~\ref{alg:unperAlg}
with simplex (dashed green line with diamond sign) and with {\ipm}
(dashed blue line with star), respectively. The bottom right figure
plots the  corresponding average relative
residual~\eqref{eqv:rel_resid} on a $\log_{10}$ scale.
\begin{figure}[htb]
\fboxsep= 10.5pt%padding thickness
\fboxrule=0.5pt%border thickness
\centering\fbox{
\includegraphics[width=0.50\textwidth]
{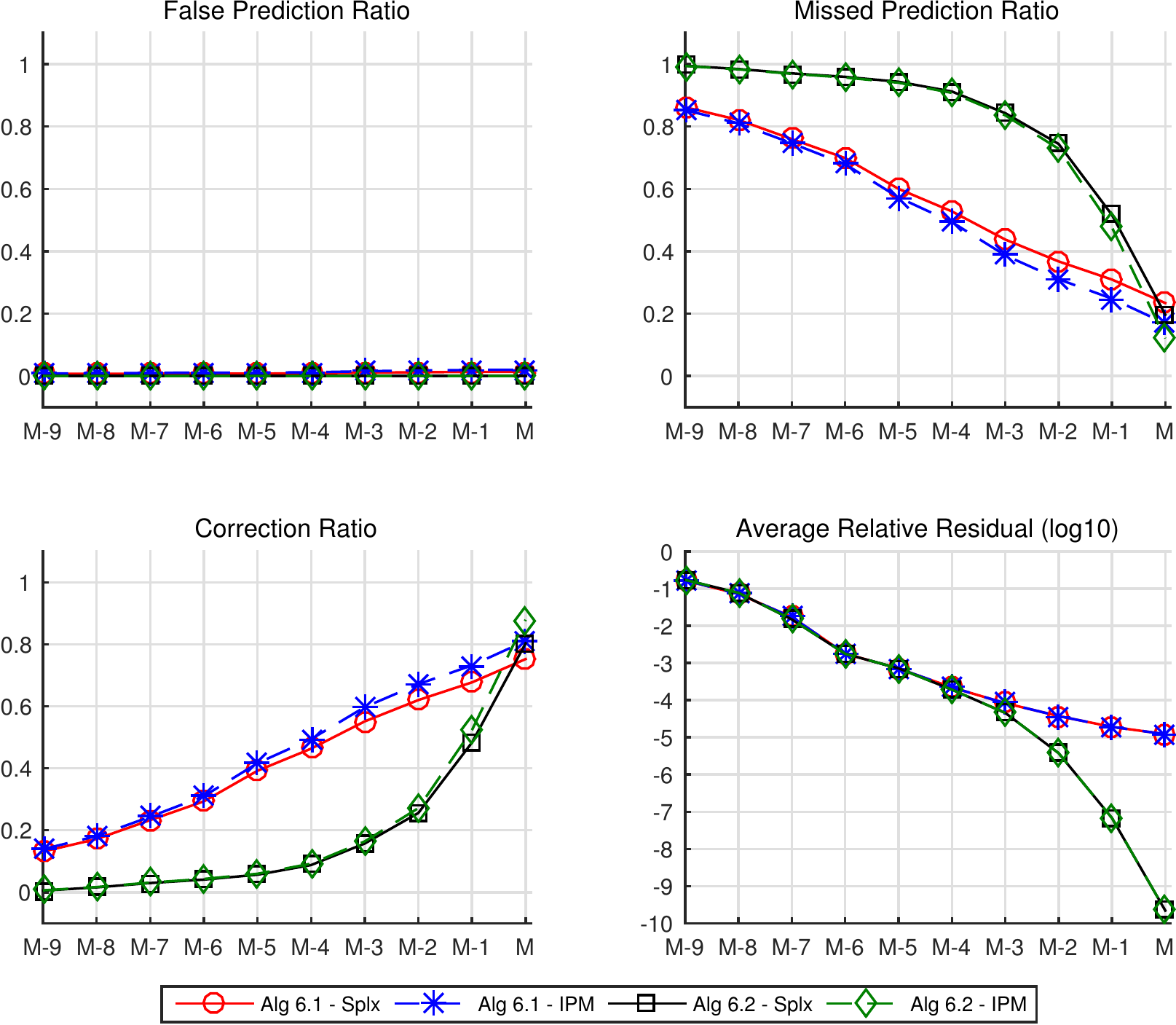} }
\caption{Comparing prediction ratios for the Netlib problems in
  {\tsn}. $M$ denotes the (variable) total number of iterations
  required to solve each test problem to a given accuracy (thus $M$ is
  generally different for each test problem).} 
\label{fig-correction-ratios-netlib}
\end{figure}
\begin{itemize}\itemsep1pt \parskip0.5pt \parsep0.5pt
\item
Figure~\ref{fig-correction-ratios-netlib} shows that using
perturbations can only improve the active-set prediction capabilities
of {\ipm}s on the tested Netlib problems,
especially in the earlier stages of the runs, when the relative
residuals are not too small. For example, the average correction
ratios when using Algorithm~\ref{alg:perAlg} are about three times
better than those of using Algorithm~\ref{alg:unperAlg} at iteration $M-5$, when the average relative residual is just slightly less than $10^{-3}$.  
\item The correction ratios for both algorithms are slightly worse when
compared with the vertex solution from {\sc matlab}'s simplex solver
than with the strictly complementary solution from {\sc matlab}'s {\sc
ipm} solver; thus it is unclear in this case whether
Algorithm~\ref{alg:perAlg} gets us closer to a vertex  or an
interior solution of the original problem (the cross-over to simplex
results for  {\tsn} in the next section seem to indicate the former is
still the case). 
\item The average relative residual  in the bottom
right plot is still quite large over the last few iterations for
Algorithm~\ref{alg:perAlg} indicating that we have not solved the
perturbed problems to high accuracy while still being able to predict well
the optimal active set of the original problem, as desired.
\end{itemize}

\subsubsection{Crossover to simplex}
In this section, we test the efficiency of our active-set predictions using perturbations when crossing over to a simplex method after some {\ipm} iterations.
We choose {\sc lp\_solve}~\cite{lpsolve} as our simplex solver (as its {\sc matlab} interface allows us to set the initial basis).
%\footnote{
%Although there are many different simplex implementations, many of which are probably more efficient and powerful than {\sc lp\_solve}, we chose {\sc lp\_solve} because its {\sc matlab} %interface has the functionality that allows us to set the initial basis. To the best of our knowledge, this is the only such open source simplex solver.}

\paragraph{Initial basis for the simplex method.}
Assume we terminate the perturbed algorithm Algorithm~\ref{alg:perAlg} at the $k^{th}$ iteration, with the predicted active set $\Actv^{k}$. To generate an initial basis $\Basis$ from $\Actv^{k}$, we first obtain all independent columns in $A_{\Iactv^{k}}$. If this submatrix is not of rank $m$, we choose a column from $A_{\Actv^{k}}$ and append it to the submatrix provided it is independent of existing columns in the submatrix.
The order in which columns are added back in is decided by dual information, namely we keep trying a series of columns $\left\{ A_{i_{t}}\right\}$, where $i_{t} \in \Actv^{k}$ and $s^{k}_{i_{1}} \leq s^{k}_{i_{2}} \leq \ldots \leq s^{k}_{i_{| \Actv^{k}|}}$, until a full rank square matrix is obtained.
Since $A$ is full row rank\footnote{%
%A matrix A can always be reduced to a full row rank matrix~\cite[Page 31-32]{wright}. 
In our tests, we apply the preprocessing code from \href{http://www.caam.rice.edu/~zhang/lipsol/}{{\sc lipsol}}~\cite{Zhang96} to ensure that $A$ is full row rank.},
this procedure is finite. A similar approach has been used in~\cite[Section 7]{Tone} to form a basis of $A$.

To conduct the tests, we first choose a threshold $\mup^{\text{cap}}$, run Algorithm~\ref{alg:perAlg}, terminate the algorithm when $\mup^{k} < \mup^{\text{cap}}$, record the number of interior point iterations, say $K$, generate an initial basis $\Basis$ by the above procedure and finally start the simplex solver {\sc lp\_solve} from the initial basis $\Basis$. For comparison purposes we perform exactly $K$ iterations of Algorithm~\ref{alg:unperAlg},
and generate a new basis for~\eqref{eqv:originalPD} by the same procedure, without constraining the value of $\mu^{k}$. All tests in this part are run with $\mup^{\text{cap}} = 10^{-3}$.

We compare the number of simplex iterations used to get an optimal solution after crossover from Algorithms~\ref{alg:perAlg} and~\ref{alg:unperAlg}, visualising the results via a relative performance profile~\cite{Morales}. Namely, we consider the following relative iteration count,
\begin{equation}
\label{eqv:rel_performance}
	\mbox{rl}_{i} = -\log_{2} \frac{\mbox{Iter}^{p}_{i}}{\mbox{Iter}^{0}_{i}},
\end{equation}
where $i$ stands for the $i^{th}$ problem, the numerator stands for the number of simplex iterations performed after Algorithm~\ref{alg:perAlg} and the denominator measures the same but after Algorithm~\ref{alg:unperAlg}. If, for problem $i$, Algorithm~\ref{alg:perAlg} uses fewer simplex iterations, we get a positive valued bar with $\mbox{height} = \mbox{rl}_{i} $. If Algorithm~\ref{alg:unperAlg} wins, we obtain a negative valued bar with height defined as $-\mbox{rl}_{i} $. The value of the bar will be 0 if these two yield the same simplex iterations or {\sc lp\_solve} fails for both algorithms. If {\sc lp\_solve} fails to solve problem $i$ for Algorithm~\ref{alg:perAlg}, we have a negative valued bar with height of $\max_{i} \left( | \mbox{rl}_{i} | \right)$, otherwise a positive valued bar with the same height. It is clear that the winner outperforms the loser by $2^{ | \mbox{rl}_{i} | }$ times and one algorithm outperforms the other by having more bars (or larger area of bars) in its direction. 

\paragraph{Crossover to simplex for randomly generated test problems ({\tspndeg} and {\tspddeg}).}
In Figures~\ref{fig-crossover-nondegen} and~\ref{fig-crossover-degen}, we show the profiles for {\tspndeg} (left) and {\tspddeg} (right), with bars sorted from largest to smallest in height.
We can see that, counting the number of simplex iterations after each algorithm, the performance of Algorithm~\ref{alg:perAlg} dominates that of Algorithm~\ref{alg:unperAlg} in both cases. 

\begin{figure}[htd]
\begin{minipage}[t]{0.48\linewidth}
\centering
\includegraphics[width=\textwidth]{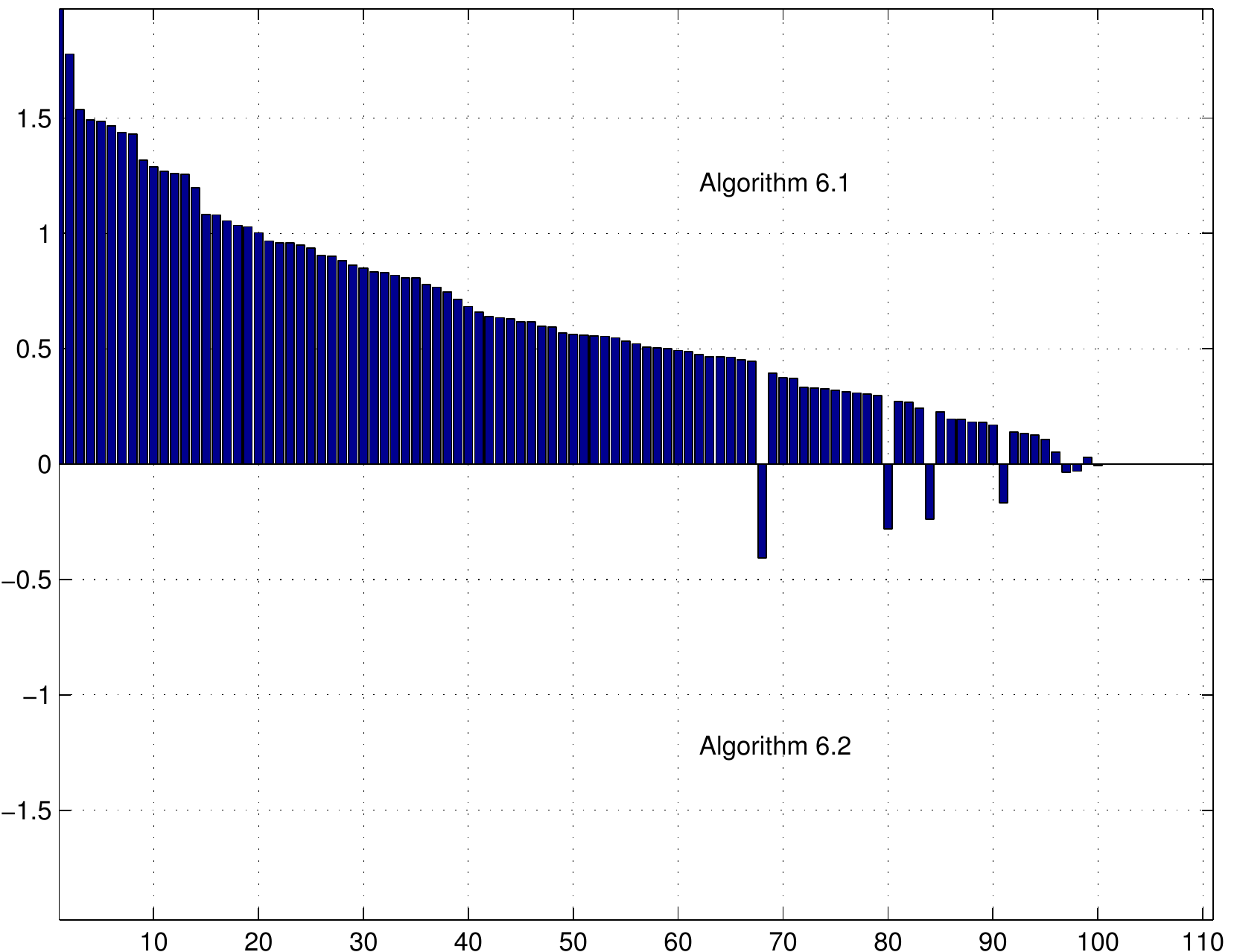}
\caption{\small Simplex iteration count for randomly generated problems}
\label{fig-crossover-nondegen}
\end{minipage}
\hspace{0.2cm}
\begin{minipage}[t]{0.48\linewidth}
\centering
\includegraphics[width=\textwidth]{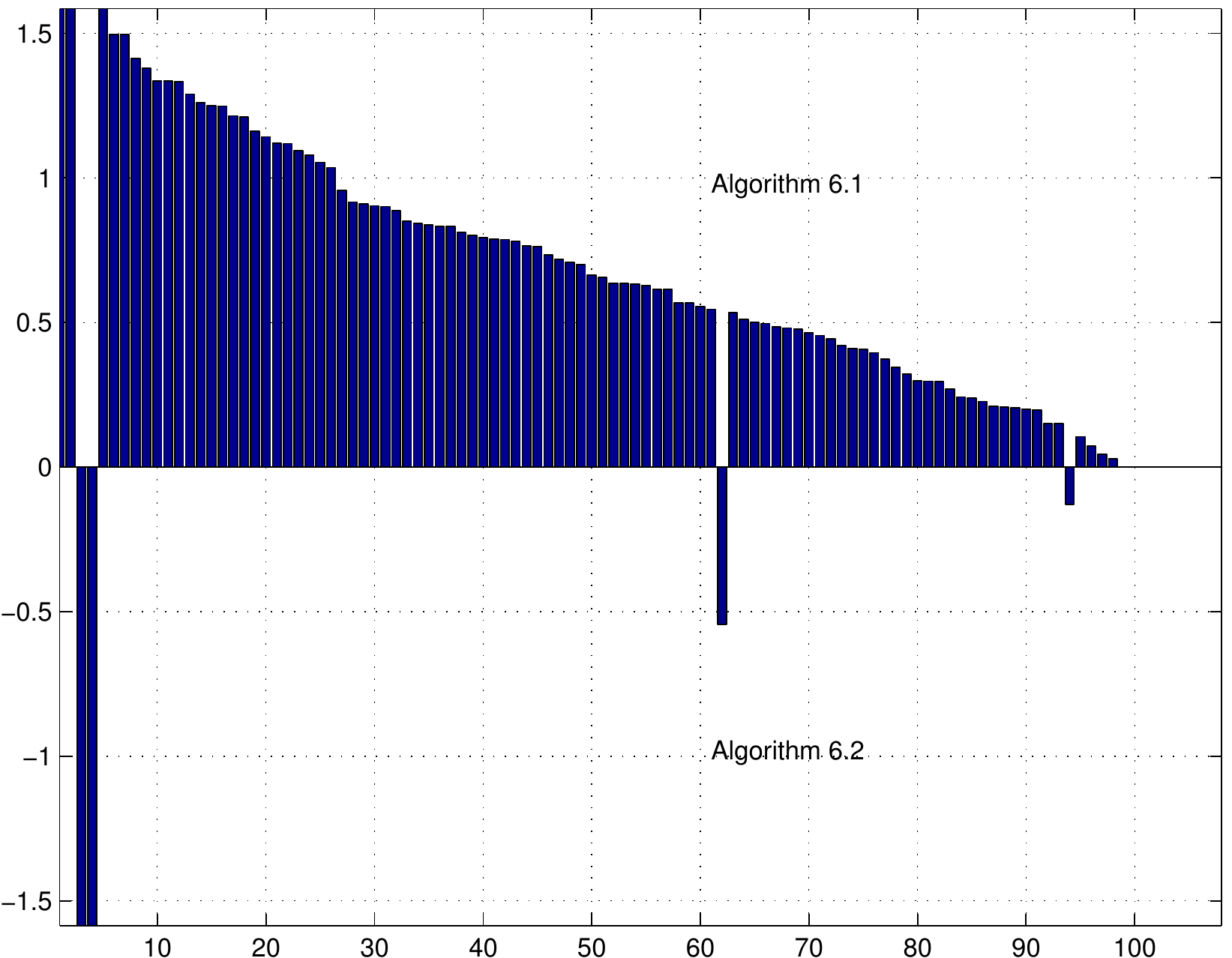}
\caption{\small Simplex iteration count for randomly generated primal-dual degenerate problems}
\label{fig-crossover-degen}
\end{minipage}
\end{figure}

In Table~\ref{tab-crossover-random}, we show the average number of simplex iterations, the average {\ipm} iterations and the average $\mup^{k}$ and $\mu^{k}$ when we terminate Algorithms~\ref{alg:perAlg} and~\ref{alg:unperAlg} for both test sets ({\tspndeg} and {\tspddeg}). On average, using perturbations saves about $34\%$ simplex iterations for the test case~{\tspndeg} and about $37\%$ for~{\tspddeg}. Due to our experimental setup, the number of {\ipm} iterations are the same for Algorithms~\ref{alg:perAlg} and~\ref{alg:unperAlg}, and the average final $\mup^{k}$ and $\mu^{k}$ before crossover are of order $10^{-4}$.\footnote{%
The definition of $\mup^{k}$ and $\mu^{k}$ in Algorithms~\ref{alg:perAlg} and~\ref{alg:unperAlg}, respectively, as well as the choice of $(x^{0},s^{0})$ to be identical for~\eqref{eqv:perPD} and~\eqref{eqv:originalPD}, imply that $\mup^{0} > \mu^{0}$, with the difference being essentially dictated by the level of perturbations $(\lambda^{0}, \phi^{0})$. Thus we are not making it any easier for Algorithm~\ref{alg:perAlg} compared to Algorithm~\ref{alg:unperAlg} in the choice of starting point. }
\begin{table}[htb]
\centering
\caption{Crossover to simplex when $\mup^{k}< 10^{-3}$ for random problems.}
\label{tab-crossover-random}
\scalebox{0.88}{
\begin{tabular}{r p{0.16\textwidth} p{0.16\textwidth} p{0.16\textwidth} p{0.16\textwidth} p{0.16\textwidth} }
	\multicolumn{1}{r}{} & \multicolumn{2}{c}{Primal nondegenerate ({\tspndeg})} & \multicolumn{2}{c}{PD degenerate ({\tspddeg})} \\
	\multicolumn{1}{r}{} & \multicolumn{1}{c}{Algorithm~\ref{alg:perAlg}} & Algorithm~\ref{alg:unperAlg} & \multicolumn{1}{c}{Algorithm~\ref{alg:perAlg}}	& \multicolumn{1}{c}{Algorithm~\ref{alg:unperAlg} } \\
	\cline{2-5}
	\multicolumn{1}{r|}{Avg simplex iterations}   	& \multicolumn{1}{c}{287}  & \multicolumn{1}{c|}{436}   &  \multicolumn{1}{c}{ 292}  & \multicolumn{1}{c|}{ 464} \\
	\multicolumn{1}{r|}{Avg {\sc ipm} iterations}  	& \multicolumn{1}{c}{10}  & \multicolumn{1}{c|}{10}   &  \multicolumn{1}{c}{10}  &  \multicolumn{1}{c|}{10 }  \\
	\multicolumn{1}{r|}{Avg $\mup^{k}$ and $\mu^{k}$ when crossover} & \multicolumn{1}{c}{$7.33\times 10^{-4}$}  & \multicolumn{1}{c|}{ $6.80\times 10^{-4}$}   &  \multicolumn{1}{c}{ $7.53\times 10^{-4}$}  &  \multicolumn{1}{c|}{$7.14\times 10^{-4}$ } \\
	\cline{2-5}
\end{tabular} }
\end{table}

We  also tracked the difference between the initial bases generated from Algorithms~\ref{alg:perAlg} and~\ref{alg:unperAlg}. We use relative difference\footnote{\label{fot-relative_basis__diff}%
The number of elements in either basis generated from Algorithms~\ref{alg:perAlg} or~\ref{alg:unperAlg} but not both divided by the cardinality of the union of two bases.
}
to measure the degree of difference between two bases. On average the relative difference is over $60\%$, and over $90\%$ of the test problems have greater than $50\%$ relative difference.
Thus our preliminary numerical experiments illustrate that using perturbations is likely to improve the efficiency when crossing over to simplex.

\paragraph{Netlib test problems ({\tsn}).}
The good prediction performance of the perturbed algorithm is not only obtained for randomly generated problems, but also for the subset of Netlib problems ({\tsn}). Here we add an additional termination criterion, namely we terminate both algorithms when $\mup^{k}$ and $\mu^{k}$ are less than $10^{-3}$ or when the relative residual~\eqref{eqv:rel_resid} is less than $10^{-6}$, whichever occurs first\footnote{\label{fot-large_b}%
This is because some problems have very large components in the right hand side $b$ with $\max(b) > 10^{3}$. For these problems, even when $\mup^{k} >10^{-3}$, the relative residual may already be less than $10^{-6}$ and this causes numerical problems when trying to decrease $\mup^{k}$ further. There are five problems of this kind, {\sc agg3}, {\sc forplan}, {\sc grow7}, {\sc israel} and {\sc share1b}, and we have marked those problems by $*$ in Table~\ref{tab-sumCrossover}. A possible remedy may be to consider using `scaled' perturbations in Algorithm~\ref{alg:perAlg}, namely, to set the perturbations to some percentage deviation for each component of the right-hand side $b$. 
}.

Figure~\ref{fig-crossover-netlib} presents the relative performance profile generated the same way as for the random tests (see~\eqref{eqv:rel_performance} and accompanying explanation). From this figure, we can see that for over half of the test problems, Algorithm~\ref{alg:perAlg} outperforms Algorithm~\ref{alg:unperAlg} by over $1.5$ times. Algorithm~\ref{alg:perAlg} `loses' for only 7 problems. 
\begin{figure}[htb]
\centering
\includegraphics[width=0.50\textwidth]{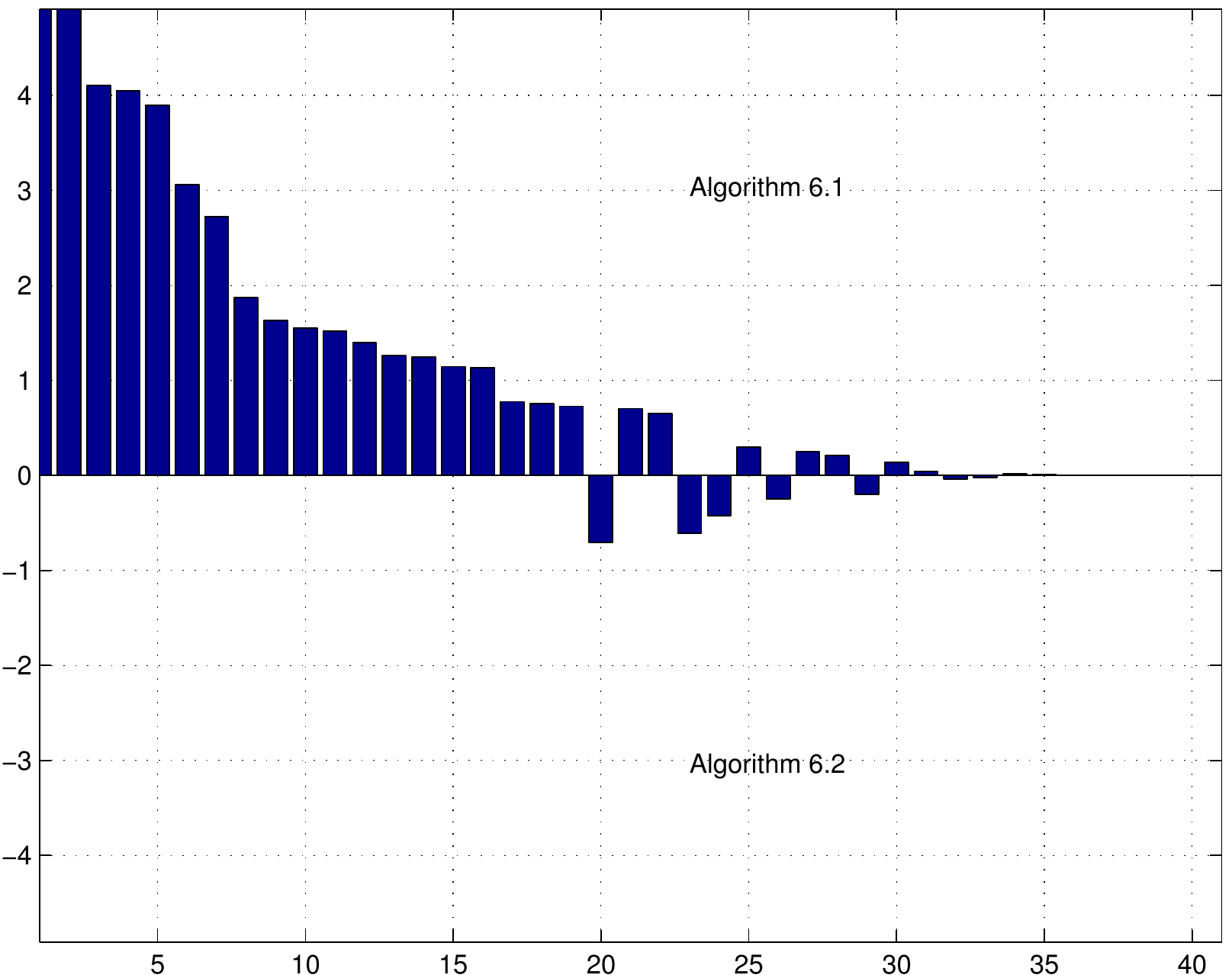}
\caption{Crossover to simplex for 37 Netlib problems}
\label{fig-crossover-netlib}
\end{figure}

We also summarise the results in Table~\ref{tab-crossover-netlib}. On average, we save about 38\% simplex iterations by applying perturbations.  The average numbers in the table exclude the data for {\sc ship08s}, since {\sc lp\_solve} fails to solve it when we do not apply perturbations.

\begin{table}[htb]
\centering
\caption{Crossover to simplex when $\mup^{k}< 10^{-3}$ for 37 Netlib problems (\tsn). }
\label{tab-crossover-netlib}
\begin{tabular}{r c c}
	\multicolumn{1}{r}{}  & \multicolumn{1}{c}{Algorithm~\ref{alg:perAlg}} & \multicolumn{1}{c}{ Algorithm~\ref{alg:unperAlg} } \\
	\cline{2-3}
	\multicolumn{1}{r|}{Avg simplex iterations}    	& \multicolumn{1}{c}{358}  & \multicolumn{1}{c|}{ 612 }    \\
	\multicolumn{1}{r|}{Avg {\sc ipm} iterations}  	& \multicolumn{1}{c}{22 }  & \multicolumn{1}{c|}{22 }    \\
	\cline{2-3}
\end{tabular}
\end{table} 
We do not give the average value of $\mup^{k}$ in the table as it is more involved than for random problems. In particular, for the problems with very large component in $b$ (problems marked by * in Table~\ref{tab-sumCrossover} ), the value of $\mup^{k}$ is greater than $10^{-3}$ for both Algorithms~\ref{alg:perAlg} and~\ref{alg:unperAlg}. There are 8 additional problems, including {\sc 25fv47}, {\sc bnl1}, {\sc brandy}, {\sc kb2}, {\sc scfxm2}, {\sc scrs8}, {\sc scatp1} and {\sc stair}, for which the value of $\mup^{k}$ is less than $10^{-3}$ only when we apply perturbations. This seems to imply that using perturbations can somehow accelerate the interior point method procedure or yield better conditioning. Except for these particular problems, the average value of $\mup^{k}$ is of order $10^{-4}$. For detailed data, see Table~\ref{tab-sumCrossover}. 

As for randomly generated problems, we also tracked and compared the differences between initial bases obtained from Algorithms~\ref{alg:perAlg} and~\ref{alg:unperAlg}. We use the same relative difference measure (see Footnote~\ref{fot-relative_basis__diff}). 
The average difference is about $40\%$, but there are 9 problems\footnote{%
{\sc afiro, agg3, grow7, isreal, sc50b, scfxm2, scfxm3, seba} and {\sc stocfor2}.
} with relative difference less than $10\%$. Algorithm~\ref{alg:perAlg} is no better than Algorithm~\ref{alg:unperAlg} for these problems. Generally, for small problems with small relative differences between bases, the simplex iterations are quite similar; for large problems, even small relative difference can yield quite different simplex iterations (such as for {\sc seba} and {\sc stocfor2}). The disappointing small relative difference of initial bases may be the result of inappropriate initial perturbations, improper shrinking speed of perturbations or ill-conditioning. 

%%%%%%%%%%% Section: Conclusion %%%%%%%%%%%%%%

\section{Conclusions and future work}
%\resetcounters

We have proposed the use of controlled perturbations for improving active set prediction capabilities of
 {\ipms} for {\lp}. The perturbations are chosen so as to slightly enlarge the feasible set
in the hope that the central path of the perturbed problems passes through or close to the original solution set when the perturbed barrier parameter is not too small.
 Our approach solves a (sequence of) perturbed problems
 using a standard primal-dual path-following method
 and predicts
using cut-off, the optimal active set of the original problem on the way. We have provided 
 theoretical and preliminary numerical evidence that this approach to active-set
prediction for {\ipms} looks promising in that the perturbed problems are not being solved to high accuracy before the original optimal active set can be accurately
predicted and that the perturbations help with the accuracy and speed
of the activity prediction for the original solution set. 
%Note that as we are applying a standard
%{\ipm} to a suite of perturbed problems, our approach maintains
%polynomial complexity provided the number of times we shrink the
%perturbations is (polynomially) finite \footnote{We can think of
 % Algorithm~\ref{alg:perAlg} as a standard primal-dual path-following
  %{\ipm} applied to solving (to some  accuracy) a sequence
  %of LP problems \eqref{eqv:perPD}, with different and shrinking vectors of perturbations on the
  %primal and dual variables. Thus if the number of iterations of Algorithm~\ref{alg:perAlg} in which
  %we shrink the perturbations $(\lambda,\phi)$ (which could be in each
  %iteration) is finite and polynomial in problem dimensions, then 
%Simple safeguards can
 % be included in Algorithm~\ref{alg:perAlg}  }.

There are several issues remaining for full validation of the proposed approach; such as the choice of the initial perturbations which we currently set to a fixed
small value that we then adjust, but that may be more suitably set to some problem-dependent value. At present, we have used 
cut-off to predict the original optimal active set when solving the perturbed problems~\eqref{eqv:perPD}; we plan to explore other suitable techniques for
the prediction such as the {\it identification function} proposed originally for nonlinear programming \cite{Facchinei}. Note that indicators \cite{bakry}
are not suitable for our purposes as they can only predict the perturbed optimal active-set when calculated in the context of an {\ipm} applied to~\eqref{eqv:perPD}.
Finally, a large-scale implementation and testing of the perturbed
approach and prediction is needed to complete our numerical
experiments for {\lp}. From a theoretical point of view, it would be
important to show polynomial complexity of a safeguarded (say, long-step \cite{wright}) variant of
Algorithm~\ref{alg:perAlg}; this seems achievable since one can think of
Algorithm~\ref{alg:perAlg} as a standard primal-dual path-following
  {\ipm} applied to solving (to some  accuracy) a sequence
  of {\lp} problems \eqref{eqv:perPD}, and so each  solve of a
  \eqref{eqv:perPD} 
  could be shown to have polynomial complexity.

There are several interesting/important areas
that may benefit from the application of the controlled perturbations approach for active-set prediction
for {\ipm}s. For example, it may prove useful for improving {\it warmstarting capabilities}~\cite{Engau2010,Skajaa2013} of {\sc
  ipm}s. Furthermore, it could be applied to the
{\it Homogeneous Self-Dual ({\hsd}) embedding model} \cite{yehsd1994,xu1996simplified} for an {\lp},  which is a very useful re-formulation
that allows
assessing whether  the given {\lp} problem has a solution, as
well as finding this solution,  depending on whether some auxiliary
variables are active or inactive at the solution of the {\hsd} problem.
Thus early and accurate activity prediction for {\hsd} models could save significant
computational effort especially if the original {\lp} problem is
infeasible/unbounded. Note that the {\hsd} model needs to solve a 
monotone {\lcp} problem and so one would need to first extend our
controlled perturbations approach to such problems, which seems
plausible. We have not explored the use of our active-set prediction
strategy in conjunction with {\it iterative linear solvers for} {\ipm}s
\cite{Gondzio25years}; predicting the active constraints sooner
may help alleviate the powerful effect that the ill-conditioning (of the {\ipm}
subproblem) has on the performance of iterative solvers for this subproblem
close to the optimal solution of the {\lp}.
Extending our activity prediction proposal
to the {\it convex quadratic programming} case is another potential future research direction.

%\paragraph*{\footnotesize Acknowledgements.} {\footnotesize
\begin{acknowledgements} 
%The second author of this paper was supported by the Principal's Career Development Scholarship from the University of Edinburgh. 
We are
  grateful to Nick Gould for useful discussions and insights. We also
  thank three anonymous referees for instructive comments that have
  improved the quality of the paper, and the Mathematical Institute, University of Oxford, for hosting the second author during
the completion of this work.
\end{acknowledgements} 

\bibliography{aiipm_bibtex}
\bibliographystyle{spmpsci} 

\appendix
\normalsize
\section{Proof of Lemma~\ref{lem-bound_x_s_strict_fea}}
\label{apd-proof_lemma_4_1}
%\resetcounters
%\renewcommand{\theequation}{A.\arabic{equation}}
%\renewcommand{\thelemma}{A.\arabic{lemma}}
%\renewcommand{\thetheorem}{A.\arabic{theorem}}

An error bound for an optimization problem bounds the distance from a given point to the solution set in terms of a residual function~\cite{pang}. 
In this section, we first formulate an {\lp} problem as a monotone Linear Complementarity Problem ({\lcp}) and then apply a global error bound for the monotone {\lcp} to the reformulated {\lp} problem in order to derive an error bound for the {\lp}, and so prove Lemma~\ref{lem-bound_x_s_strict_fea}. 

By setting $s = c-A^{\top}y$ and $y=y^{+}-y^{-}$, where $y^{+}=\max(y,0)$ and $y^{-}=-\min(y,0)$, the first order optimality conditions~\eqref{eqv:kkt_perturbed} with $\lambda = 0$ for~\eqref{eqv:originalPD} can be reformulated as

\begin{equation}
\label{eqv:kkt_modified}
\begin{array}{r}
\displaystyle  Ax-b \geq 0,\,\, -Ax+b \geq 0,\\
\displaystyle  c- A^{\top}y^{+}+A^{\top}y^{-} \geq 0, \\
\displaystyle  x^{T}(c- A^{\top}y^{+}+A^{\top}y^{-}) = 0,\\
\displaystyle  x \geq 0,\,\, y^{+} \geq 0,\,\, y^{-} \geq 0.\\
\end{array}
\end{equation}

Let
\begin{equation}
\label{eqv:LCP_Mzq}
 M=
 \begin{bmatrix}
 0& -A^{T} & A^{T}\\
 A& 0 & 0\\
 -A& 0 & 0
\end{bmatrix},
\quad
z=
\begin{bmatrix}
x\\
y^{+}\\
y^{-}
\end{bmatrix} 
\quad\mbox{and}\quad
q=
\begin{bmatrix}
 c\\
-b\\
 b
\end{bmatrix},
\end{equation}
where $A$, $b$ and $c$ are~\eqref{eqv:originalPD} problem data and $\pntp \in \Real^{n} \times \Real^{m} \times \Real^{n}$.
Then finding a solution of~\eqref{eqv:kkt_modified} is equivalent to solving the following problem,
\begin{equation}
\label{eqv:lcp}
\quad Mz+q \geq 0, \quad z \geq 0, \quad z^{T}(Mz+q)=0,\quad
\end{equation}
where $M$, $q$ and $z$ are defined in~\eqref{eqv:LCP_Mzq}, and $z$ is considered to be the vector of variables.

\begin{lemma}
\label{lem-sol}
\eqref{eqv:originalPD} is equivalent to the {\lcp} in~\eqref{eqv:lcp} with $M$ and $q$ defined in~\eqref{eqv:LCP_Mzq}, namely,
\begin{enumerate}
\item If $(x,y^{+},y^{-})$ is a solution of the {\lcp}~\eqref{eqv:lcp}, then $\pntp$ 
is a~\eqref{eqv:originalPD} solution, where $y=y^{+}-y^{-}$ and $s=c-A^{T}y$.
\item If $\pntp$ is a~\eqref{eqv:originalPD} solution, then $(x,y^{+},y^{-})$ is a solution of the {\lcp}~\eqref{eqv:lcp}.
\end{enumerate}
\end{lemma}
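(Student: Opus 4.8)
The plan is to verify directly that, under the substitutions $y = y^{+} - y^{-}$ and $s = c - A^{T}y$, the {\lcp} system~\eqref{eqv:lcp} with $M$ and $q$ from~\eqref{eqv:LCP_Mzq} is exactly the {\kkt} system~\eqref{eqv:kkt_perturbed} with $\lambda = 0$ (equivalently, the reformulation~\eqref{eqv:kkt_modified}), and then to invoke the fact that this {\kkt} system characterises the (primal-dual) solutions of~\eqref{eqv:originalPD}, see~\cite[Theorem 12.1]{Nocedal2006}. First I would compute, writing $z = (x, y^{+}, y^{-})$ and $y = y^{+} - y^{-}$,
\[
  Mz + q = \begin{bmatrix} c - A^{T}y \\ Ax - b \\ -(Ax-b) \end{bmatrix},
\]
so that: $z \geq 0$ is equivalent to $x \geq 0$, $y^{+} \geq 0$, $y^{-} \geq 0$; $Mz + q \geq 0$ is equivalent to $Ax = b$ together with $c - A^{T}y \geq 0$; and, since $z$ and $Mz + q$ are then componentwise nonnegative, $z^{T}(Mz + q) = 0$ holds if and only if each block vanishes, which (once $Ax = b$) reduces to the single nontrivial condition $x^{T}(c - A^{T}y) = 0$.

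For part~2, given a~\eqref{eqv:originalPD} solution $\pntp$, I would set $y^{+} = \max(y,0) \geq 0$ and $y^{-} = -\min(y,0) \geq 0$, whence $y = y^{+} - y^{-}$ and, together with $x \geq 0$, $z = (x, y^{+}, y^{-}) \geq 0$. The {\kkt} relations $Ax = b$, $A^{T}y + s = c$, $s \geq 0$, $x^{T}s = 0$ satisfied by $\pntp$ then give, via the display above, $Mz + q = (s, 0, 0) \geq 0$ and $z^{T}(Mz + q) = x^{T}s = 0$, so $z$ solves~\eqref{eqv:lcp}. For part~1, conversely, given an {\lcp} solution $z = (x, y^{+}, y^{-})$, set $y = y^{+} - y^{-}$ and $s = c - A^{T}y$; reading off the blocks of $z \geq 0$, $Mz + q \geq 0$ and $z^{T}(Mz + q) = 0$ as above shows that $\pntp$ satisfies $Ax = b$, $A^{T}y + s = c$, $x \geq 0$, $s \geq 0$, $x^{T}s = 0$, i.e.~\eqref{eqv:kkt_perturbed} with $\lambda = 0$, and hence $\pntp$ is a~\eqref{eqv:originalPD} solution by~\cite[Theorem 12.1]{Nocedal2006}.

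There is no genuine obstacle here; the statement is a routine bookkeeping exercise. The only mild points to be careful about are that the {\lcp} complementarity $z^{T}(Mz+q) = 0$ decouples correctly --- in particular the blocks associated with $y^{+}$ and $y^{-}$ impose nothing beyond primal feasibility $Ax = b$, which is already forced by $Mz+q \geq 0$ --- and that for a linear program the {\kkt} conditions are both necessary and sufficient for optimality, so no constraint qualification is needed.
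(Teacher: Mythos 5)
Your proof is correct and follows essentially the same route as the paper, which treats this lemma as the routine identification of the {\lcp}~\eqref{eqv:lcp} with the reformulated {\kkt} system~\eqref{eqv:kkt_modified} under the substitutions $y=y^{+}-y^{-}$, $s=c-A^{T}y$; your block computation of $Mz+q$ and the two-way verification is exactly that argument written out. The only point worth noting is the one you already handle: complementarity $z^{T}(Mz+q)=0$ decouples because all blocks are nonnegative, and the {\kkt} conditions are necessary and sufficient for {\lp}, so nothing further is needed.
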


Next we show that our {\sc lp} problem can be viewed as a monotone {\sc lcp}~\cite{cottle}.
\begin{lemma}
\label{lem-M_psd}
The matrix $M$, defined in~\eqref{eqv:LCP_Mzq}, is positive semidefinite, and so~\eqref{eqv:lcp} is a monotone {\lcp}.
\end{lemma}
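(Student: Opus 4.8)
The claim is that the matrix
\[
M = \begin{bmatrix} 0 & -A^{T} & A^{T} \\ A & 0 & 0 \\ -A & 0 & 0 \end{bmatrix}
\]
is positive semidefinite. Here positive semidefinite in the {\lcp} sense means $z^{T}Mz \geq 0$ for all $z$ of compatible dimension; we do not require symmetry of $M$ (indeed $M$ is not symmetric). The plan is simply to compute the quadratic form $z^{T}Mz$ directly by partitioning $z$ conformally with $M$ and show that it vanishes identically.

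First I would write $z = (x, y^{+}, y^{-})$ with $x \in \Real^{n}$ and $y^{+}, y^{-} \in \Real^{m}$, and compute $Mz$ block by block: the first block of $Mz$ is $-A^{T}y^{+} + A^{T}y^{-} = A^{T}(y^{-}-y^{+})$, the second block is $Ax$, and the third block is $-Ax$. Then
\[
z^{T}Mz = x^{T}A^{T}(y^{-}-y^{+}) + (y^{+})^{T}Ax + (y^{-})^{T}(-Ax).
\]
Now observe that $x^{T}A^{T}(y^{-}-y^{+}) = (Ax)^{T}(y^{-}-y^{+}) = (y^{-})^{T}Ax - (y^{+})^{T}Ax$, so the three terms cancel in pairs and $z^{T}Mz = 0$ for every $z$. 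In particular $z^{T}Mz \geq 0$ for all $z$, so $M$ is positive semidefinite, and hence the {\lcp} in~\eqref{eqv:lcp} is a monotone {\lcp} by definition (a monotone {\lcp} being one whose defining matrix is positive semidefinite).

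There is essentially no obstacle here: the result follows from the skew-symmetric block structure of $M$ (it has the form $\begin{bmatrix} 0 & -B^{T} \\ B & 0\end{bmatrix}$ with $B = \begin{bmatrix} A \\ -A \end{bmatrix}$, and any such block-antisymmetric matrix has zero quadratic form). The only care needed is to be explicit about which notion of positive semidefiniteness is meant for a non-symmetric matrix, and to note that $M$ need not be symmetric — this is the standard convention in the {\lcp} literature~\cite{cottle}, so I would state it briefly and then do the one-line computation above. Once $z^{T}Mz \equiv 0$ is established, the conclusion that~\eqref{eqv:lcp} is monotone is immediate from the definition.
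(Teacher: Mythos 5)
Your proof is correct and follows essentially the same route as the paper: both compute the quadratic form $z^{T}Mz$ directly using the block structure of $M$ and observe that the terms cancel identically, so $z^{T}Mz=0$ for all $z$ and $M$ is positive semidefinite in the (non-symmetric) {\lcp} sense. Your added remark about the skew-symmetric block form is a nice observation but does not change the argument.
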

\begin{proof}
For all $v=(v_{1},v_{2},v_{3})$, where $v_{1} \in \Real^{n}, v_{2} \in \Real^{m}$ and $v_{3} \in \Real^{m}$,
$v^{T} M v= v^{T}_{2}Av_{1} - v^{T}_{3}Av_{1} - v_{1}^{T}A^{T}v_{2} + v^{T}_{1}A^{T}v_{3} = 0$, since $v^{T}_{2}Av_{1} = (v^{T}_{2}Av_{1})^{T}=v^{T}_{1}A^{T}v_{2}$ and $v^{T}_{3}Av_{1}=(v^{T}_{3}Av_{1})^{T}=v^{T}_{1}A^{T}v_{3}$. 
Thus $M$ is positive semidefinite.
\qed \end{proof}

A global error bound for a monotone {\sc lcp}~\cite{Mangasarian} is given next.

\begin{lemma}[{Mangasarian and Ren~\cite[Corollary 2.2]{Mangasarian}}]
\label{lem-eb}
Let $z$ be any point away from the solution set of the monotone {\lcp}~\eqref{eqv:lcp} and $z^{*}$ be the closest solution of~\eqref{eqv:lcp} 
to $z$ under the Euclidean norm $\|\cdot\|$.
Then $r(z)+w(z)$ is a global error bound for~\eqref{eqv:lcp}, namely,
\[ 
	\| z-z^{*}\| \leq \tau (r(z)+w(z)), 
\]
where $\tau$ is some problem-dependent constant, independent of $z$ and $z^{*}$, and
\begin{equation}
\label{eqv:r_w}
\displaystyle\begin{array}{c}
r(z)=\| z-(z-Mz-q)_{+}\|\\[1ex]
\text{and}\\[1ex]
w(z)=\left\| \left( -Mz-q,-z,z^{T}(Mz+q) \right)_{+}\right\|. 
\end{array}
\end{equation}
\end{lemma}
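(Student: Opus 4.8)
The plan is to derive the global error bound from the two structural facts already in hand: \eqref{eqv:lcp} is a monotone~\lcp\ (Lemma~\ref{lem-M_psd}) and, being equivalent to a solvable {\lp} (Lemma~\ref{lem-sol}), it has a nonempty solution set. The argument proceeds in three stages: (i) show that for a monotone {\lcp} the solution set is a \emph{polyhedron}; (ii) apply Hoffman's error bound for polyhedral systems to this set; and (iii) dominate the resulting Hoffman residual by $r(z)+w(z)$ from~\eqref{eqv:r_w}. The constant $\tau$ then emerges as the product of Hoffman's constant with a few fixed factors depending only on the problem data.

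For stage~(i), fix any solution $z^{0}$ of~\eqref{eqv:lcp} and write $u=Mz+q$, $u^{0}=Mz^{0}+q$. Monotonicity (Lemma~\ref{lem-M_psd}) gives $(z-z^{0})^{T}M(z-z^{0})\geq 0$; expanding and using $z^{T}u=0$ and $(z^{0})^{T}u^{0}=0$ for a second solution $z$ yields the identity
\[
(z-z^{0})^{T}M(z-z^{0}) = -\,z^{T}u^{0}-(z^{0})^{T}u .
\]
Since $z,z^{0}\geq 0$ and $u,u^{0}\geq 0$, the right-hand side is nonpositive, so both cross terms vanish and every solution satisfies the single \emph{linear} equation $g(z):=z^{T}u^{0}+(z^{0})^{T}u=0$ together with $z\geq0$, $Mz+q\geq0$. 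Conversely, if a feasible $z$ satisfies $g(z)=0$, the same identity forces $(z-z^{0})^{T}M(z-z^{0})=0$, and the short computation $z^{T}u=(z-z^{0})^{T}M(z-z^{0})+(z^{0})^{T}M(z-z^{0})=0$ (the last term vanishing because $(z^{0})^{T}u=0$) shows $z^{T}(Mz+q)=0$, so $z$ solves the {\lcp}. Hence the solution set is exactly the polyhedron $P=\{\,z : z\geq0,\ Mz+q\geq0,\ g(z)=0\,\}$, whose describing data depend only on $M$, $q$ and the fixed $z^{0}$.

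For stage~(ii), Hoffman's lemma supplies a constant $\tau_{H}>0$, depending only on the matrix of $P$, with $\operatorname{dist}(z,P)\leq \tau_{H}\,\big\|\big((-z)_{+},\,(-(Mz+q))_{+},\,|g(z)|\big)\big\|$ for all $z$; taking $z^{*}$ as the closest solution gives $\|z-z^{*}\|=\operatorname{dist}(z,P)$. For stage~(iii) I bound each residual block by $r(z)+w(z)$. Using $r(z)=\|z-(z-Mz-q)_{+}\|=\|(\,\min(z_{i},(Mz+q)_{i})\,)_{i}\|$, the pointwise inequalities $\min(z_{i},(Mz+q)_{i})\leq z_{i}$ and $\leq (Mz+q)_{i}$ give $\|(-z)_{+}\|\leq r(z)$ and $\|(-(Mz+q))_{+}\|\leq r(z)$, so the infeasibility blocks are controlled by the natural residual. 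For $g$, the monotonicity identity above (which does \emph{not} require feasibility of $z$) gives $g(z)\leq z^{T}(Mz+q)$, whence the positive part obeys $g(z)\leq (z^{T}(Mz+q))_{+}\leq w(z)$; and $-g(z)=-z^{T}u^{0}-(z^{0})^{T}u\leq \sqrt{n}\,(\|u^{0}\|_{\infty}\|(-z)_{+}\|+\|z^{0}\|_{\infty}\|(-(Mz+q))_{+}\|)\leq C\,r(z)$. Thus $|g(z)|\leq C(r(z)+w(z))$, and combining with stage~(ii) yields $\|z-z^{*}\|\leq \tau(r(z)+w(z))$ with $\tau$ a fixed multiple of $\tau_{H}$.

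I expect stage~(iii) to be the main obstacle, specifically the uniform domination $|g(z)|\leq C(r(z)+w(z))$: the linear functional $g$ that cuts out the solution set within the feasible region must be tied back to the natural and complementarity residuals through the monotonicity inequality $g(z)\leq z^{T}(Mz+q)$, and one must ensure that every constant ($\tau_{H}$, $\|u^{0}\|_{\infty}$, $\|z^{0}\|_{\infty}$, the factor $\sqrt{n}$) is fixed by the data and independent of $z$ and $z^{*}$, so that the final $\tau$ is genuinely problem-dependent only. The feasible/infeasible split is handled uniformly precisely because the natural residual $r(z)$ already absorbs all bound violations, which is what makes the clean two-term bound $r(z)+w(z)$ possible.
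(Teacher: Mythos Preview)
The paper does not give its own proof of this lemma; it is simply quoted from Mangasarian and Ren as a known result, so there is nothing to compare against directly. Your route---show the solution set of a monotone {\sc lcp} is a polyhedron, apply Hoffman's bound, then dominate the Hoffman residual by $r(z)+w(z)$---is in fact the standard strategy behind the cited result, and stages~(ii) and~(iii) are fine.

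The gap is in the converse direction of stage~(i). Your identity gives, for any $z$ and any solution $z^{0}$,
\[
(z-z^{0})^{T}M(z-z^{0}) \;=\; z^{T}u - g(z),
\]
so from feasibility of $z$ and $g(z)=0$ you only obtain $(z-z^{0})^{T}M(z-z^{0})=z^{T}u$, not $=0$. Your ``short computation'' then rederives this same equality from the other side, so the argument is circular and does not force $z^{T}u=0$ for a general monotone $M$. The correct polyhedral description for a general monotone {\sc lcp} also requires the linear constraint $(M+M^{T})z=(M+M^{T})z^{0}$; with it, $(z-z^{0})^{T}M(z-z^{0})=0$ is automatic and your converse goes through, but then stage~(iii) must also control the residual of this extra equation, which needs additional work.

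That said, in the specific {\sc lcp} the paper actually uses, Lemma~\ref{lem-M_psd} proves $v^{T}Mv=0$ for \emph{all} $v$, i.e.\ $M$ is skew-symmetric, so $(M+M^{T})=0$ and $(z-z^{0})^{T}M(z-z^{0})=0$ holds trivially. Hence your polyhedral description $P=\{z\geq0,\ Mz+q\geq0,\ g(z)=0\}$ \emph{is} exactly the solution set in this case, and the rest of your argument is valid for the paper's purposes; just replace the circular justification by a direct appeal to skew-symmetry.
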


\begin{lemma}
\label{lem-error_bound_lp}
Given the monotone {\lcp}~\eqref{eqv:lcp} with $M$ and $q$ defined
in~\eqref{eqv:LCP_Mzq}, let $(x,y^{+},y^{-})$ be any point away from
the solution set of this problem and 
$(x^{*},(y^{*})^{+},$
$(y^{*})^{-})$ be the closest solution of this {\lcp} to $(x,y^{+},y^{-})$ under the Euclidean norm $\|\cdot\|$. Then we have
\[
	\|(x,y^{+},y^{-}) - (x^{*},(y^{*})^{+},(y^{*})^{-})\| \leq \tau (r(x,y^{+},y^{-})+w(x,y^{+},y^{-})), 
\]
where $\tau$ is some problem-dependent constant, independent of $(x,y^{+},y^{-})$ and of\\$(x^{*},(y^{*})^{+},(y^{*})^{-})$, 
\begin{equation}
\label{eqv:r_lcp}
	r(x,y^{+},y^{-}) = \left\| \left(\, \cmin{x}{c-A^{T}y},\,\, \cmin{y^{+}}{Ax-b},\,\, \cmin{y^{-}}{ b-Ax )} \, \right) \right\|,
\end{equation}
and
\begin{equation}
\label{eqv:w_lcp}
w(x,y^{+},y^{-}) = \| (-(c-A^{T}y),\,\,b-Ax,\,\,Ax-b,\,\,-x,\,\, -y^{+},\,\ -y^{-},\,\,c^{T}x - b^{T}y)_{+}\|,
\end{equation}
and where $y = y^{+} - y^{-}$.
\end{lemma}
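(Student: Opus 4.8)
The plan is to apply the Mangasarian--Ren global error bound of Lemma~\ref{lem-eb} directly to the monotone {\lcp}~\eqref{eqv:lcp} with data $M$ and $q$ as in~\eqref{eqv:LCP_Mzq}, and then to translate the abstract residual functions $r(z)$ and $w(z)$ of~\eqref{eqv:r_w} into the {\lp}-specific quantities~\eqref{eqv:r_lcp}--\eqref{eqv:w_lcp}. Since Lemma~\ref{lem-M_psd} already establishes that this {\lcp} is monotone, Lemma~\ref{lem-eb} applies as is with $z=(x,y^{+},y^{-})$ and $z^{*}=(x^{*},(y^{*})^{+},(y^{*})^{-})$ its closest solution, giving $\|z-z^{*}\| \leq \tau(r(z)+w(z))$; it remains only to rewrite the right-hand side.

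First I would compute, using the block form~\eqref{eqv:LCP_Mzq},
\[
Mz+q = \left(\,c-A^{T}(y^{+}-y^{-}),\;\; Ax-b,\;\; b-Ax\,\right) = \left(\,c-A^{T}y,\;\; Ax-b,\;\; b-Ax\,\right),
\]
writing $y:=y^{+}-y^{-}$. For $r(z)=\|z-(z-Mz-q)_{+}\|$ I would invoke the elementary componentwise identity $t-(t-u)_{+}=\min(t,u)$, applied block by block to $z-(z-Mz-q)_{+}$; this produces precisely the stacked vector $\bigl(\cmin{x}{c-A^{T}y},\,\cmin{y^{+}}{Ax-b},\,\cmin{y^{-}}{b-Ax}\bigr)$, i.e.~\eqref{eqv:r_lcp}.

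For $w(z)=\|(-Mz-q,-z,z^{T}(Mz+q))_{+}\|$, the first three blocks are immediately $-(c-A^{T}y)$, $b-Ax$, $Ax-b$, and $-z=(-x,-y^{+},-y^{-})$; the one place needing a short calculation is the scalar complementarity term,
\[
z^{T}(Mz+q) = x^{T}(c-A^{T}y) + (y^{+})^{T}(Ax-b) + (y^{-})^{T}(b-Ax),
\]
where collecting the $A$-terms (using $x^{T}A^{T}y=y^{T}Ax$ and $(y^{+}-y^{-})^{T}(Ax-b)=y^{T}(Ax-b)$) cancels the cross terms and leaves $z^{T}(Mz+q)=c^{T}x-b^{T}y$. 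Substituting these into the expression for $w(z)$ reproduces~\eqref{eqv:w_lcp}, and the lemma follows with the same problem-dependent constant $\tau$. The argument is essentially bookkeeping: there is no genuine obstacle beyond carrying the three-block structure through consistently and checking that $z^{T}(Mz+q)$ collapses to the {\lp} duality-gap expression $c^{T}x-b^{T}y$; the only subtlety worth flagging is the orientation of the identity $t-(t-u)_{+}=\min(t,u)$, which one should verify in both the $t-u\geq 0$ and $t-u<0$ cases.
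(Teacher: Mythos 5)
Your proposal is correct and follows essentially the same route as the paper: apply the Mangasarian--Ren bound of Lemma~\ref{lem-eb} to the monotone {\lcp}~\eqref{eqv:lcp}, substitute the block structure of $M$ and $q$ from~\eqref{eqv:LCP_Mzq} into $r(z)$ and $w(z)$, and use the identity $u-(u-v)_{+}=\cmin{u}{v}$ together with $y=y^{+}-y^{-}$ to obtain~\eqref{eqv:r_lcp}--\eqref{eqv:w_lcp}. Your explicit simplification of $z^{T}(Mz+q)$ to $c^{T}x-b^{T}y$ is a small extra verification (it also follows at once from $z^{T}Mz=0$ by skew-symmetry of $M$), but it changes nothing of substance.
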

\begin{proof}
Substituting~\eqref{eqv:LCP_Mzq} into~\eqref{eqv:r_w} and noting that $ u - (u-v)_{+} = \cmin{u}{v}$ for any $u$, $v$ vectors, we have
\begin{equation*}
{\footnotesize{\begin{array}{l}
r(x,y^{+},y^{-}) \\[1ex]
= \left\| \left( x - (x - (c - A^{T}(y^{+} - y^{-})))_{+},\,\, y^{+} - (y^{+} - (Ax - b))_{+}, \,\, y^{-} - (y^{-} - ( b-Ax ))_{+} \right) \right\| , 
\end{array}}}
\end{equation*}
and
\begin{equation*}
{\footnotesize{\begin{array}{l}
 w(x,y^{+},y^{-}) \\[1ex]
= \left\| \left(-(c-A^{T} ( y^{+} - y^{-} ) ),\,\,b-Ax,\,\,Ax-b,\,\,-x,\,\, -y^{+},\,\ -y^{-},\,\,c^{T}x - b^{T} ( y^{+} - y^{-} ) \right)_{+}\right\|.
\end{array}}}
\end{equation*}
Recalling $y = y^{+} - y^{-} $, \eqref{eqv:r_lcp} and~\eqref{eqv:w_lcp} follow directly from the above equations.
\qed \end{proof}

\begin{theorem}[Error bound for {\lp}]
\label{lem_errorbounds_lp}
Let $\pntp \in \Real^{n}\times\Real^{m}\times\Real^{n}$ where $s = c - A^{T}y $. Then there exist a~\eqref{eqv:originalPD} solution $(x^{*},y^{*},s^{*})$
%\footnote{Note that we may lose the property that this is the closest solution to the given point.} 
and problem-dependent constants $\tau_{p}$ and $\tau_{d}$, independent of $\pntp$ and $\optSol$, such that
\begin{equation*}
\|x-x^{*}\| \leq \tau_{p}\left(r(x,y) + w(x,y) \right)
\quad
\text{and}
\quad
\|s-s^{*}\| \leq \tau_{d} \left(r(x,y) + w(x,y) \right),
\end{equation*}
where
\begin{equation}
\label{r_lp}
  r(x,y) = \left\| \left(\cmin{x}{s},\,\, \cmin{y^{+}}{Ax-b},\,\, \cmin{y^{-}}{-Ax + b}\right) \right\|,
\end{equation}
and
\begin{equation}
\label{w_lp}
w(x,y) = \| (-s,\,\,b-Ax,\,\,Ax-b,\,\,-x,\,\,c^{T}x - b^{T}y)_{+}\|,
\end{equation}
and where $y^{+} = \cmax{y}{0}$ and $y^{-} = -\cmin{y}{0}$.
\end{theorem}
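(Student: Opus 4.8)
The plan is to obtain this error bound for~\eqref{eqv:originalPD} from the error bound for the monotone {\lcp}~\eqref{eqv:lcp} already established in Lemma~\ref{lem-error_bound_lp}, after checking that the two residual functions match. Given $\pntp$ with $s = c - A^{T}y$, I would set $y^{+} = \cmax{y}{0}$ and $y^{-} = -\cmin{y}{0}$, so that $(x,y^{+},y^{-})$ is an admissible argument for~\eqref{eqv:lcp} with $y = y^{+} - y^{-}$. If $(x,y^{+},y^{-})$ already solves~\eqref{eqv:lcp}, then $\pntp$ is itself a~\eqref{eqv:originalPD} solution by Lemma~\ref{lem-sol}(1), and the claimed inequalities hold trivially with $\optSol = \pntp$ since their left-hand sides vanish while $r(x,y)$ and $w(x,y)$ are nonnegative. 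Otherwise I proceed as below.

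Next I would verify that the residuals in Lemma~\ref{lem-error_bound_lp} coincide with those in the theorem statement. Since $s = c - A^{T}y$, we have $\cmin{x}{c-A^{T}y} = \cmin{x}{s}$ and $-(c-A^{T}y) = -s$, while $b - Ax = -Ax + b$; moreover $y^{+}\geq 0$ and $y^{-}\geq 0$ force $(-y^{+})_{+} = (-y^{-})_{+} = 0$, so the $-y^{+}$ and $-y^{-}$ entries contribute nothing to the norm defining $w$ in~\eqref{eqv:w_lcp}. Hence $r(x,y^{+},y^{-})$ in~\eqref{eqv:r_lcp} equals $r(x,y)$ in~\eqref{r_lp}, and $w(x,y^{+},y^{-})$ in~\eqref{eqv:w_lcp} equals $w(x,y)$ in~\eqref{w_lp}. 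Applying Lemma~\ref{lem-error_bound_lp} then produces a closest {\lcp} solution $(x^{*},(y^{*})^{+},(y^{*})^{-})$ with
\[
\| (x,y^{+},y^{-}) - (x^{*},(y^{*})^{+},(y^{*})^{-}) \| \leq \tau\left( r(x,y) + w(x,y)\right),
\]
and Lemma~\ref{lem-sol}(1) converts it into a~\eqref{eqv:originalPD} solution $\optSol$ via $y^{*} = (y^{*})^{+} - (y^{*})^{-}$ and $s^{*} = c - A^{T}y^{*}$.

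The primal bound is then immediate, as $\|x - x^{*}\|$ is dominated by the left-hand side of the displayed inequality, so one may take $\tau_{p} = \tau$. For the dual bound I would use $s - s^{*} = A^{T}(y^{*} - y)$ together with the componentwise estimate $|y_{i} - y^{*}_{i}| \leq |y^{+}_{i} - (y^{*})^{+}_{i}| + |y^{-}_{i} - (y^{*})^{-}_{i}|$, which after squaring and summing gives $\|y - y^{*}\| \leq \sqrt{2}\,\| (x,y^{+},y^{-}) - (x^{*},(y^{*})^{+},(y^{*})^{-}) \|$; hence $\|s - s^{*}\| \leq \sqrt{2}\,\|A\|\,\tau\,(r(x,y) + w(x,y))$, and one sets $\tau_{d} = \sqrt{2}\,\|A\|\,\tau$. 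Both $\tau$ (from~\cite[Corollary 2.2]{Mangasarian}) and $\|A\|$ are problem-dependent but independent of the points involved, as required. The only genuinely delicate part is the bookkeeping that matches the {\lcp} and {\lp} residual functions --- in particular noticing that the $-y^{+}$ and $-y^{-}$ components drop out of $w$ --- and the mild loss of isometry when passing from the concatenated vector $(y^{+},y^{-})$ back to $y$, which is exactly what produces the harmless $\sqrt{2}$ factor in $\tau_{d}$; everything else is routine.
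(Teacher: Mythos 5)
Your proposal is correct and follows essentially the same route as the paper: apply the monotone {\lcp} error bound of Lemma~\ref{lem-error_bound_lp} to $(x,y^{+},y^{-})$, use Lemma~\ref{lem-sol} to convert the {\lcp} solution back to a~\eqref{eqv:originalPD} solution, match the residuals using $s=c-A^{T}y$ and $(y^{+},y^{-})\geq 0$, and bound $\|s-s^{*}\|$ through $\|A\|\,\|y-y^{*}\|$. The only differences are cosmetic: your $\tau_{d}=\sqrt{2}\,\|A\|\,\tau$ is a slightly sharper constant than the paper's $2\tau\|A\|$, and your separate treatment of the case where $(x,y^{+},y^{-})$ already solves the {\lcp} is a harmless extra precaution the paper omits.
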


\begin{proof}
Consider the monotone {\lcp}~\eqref{eqv:lcp} with $M$ and $q$ defined in~\eqref{eqv:LCP_Mzq} and $z=(x,y^{+},y^{-})$. Let $z^{*}=(x^{*},(y^{*})^{+},(y^{*})^{-})$ be the closest solution to $z$ in the solution set of this {\lcp}. From Lemma~\ref{lem-sol}, $(x^{*},y^{*},s^{*})$ with $y^{*}=(y^{*})^{+}-(y^{*})^{-}$ and $s^{*}=c-A^{T}y^{*}$ is a~\eqref{eqv:originalPD} solution.
(Note that we may lose the property that this is the closest solution to the given point.) 
From $( y^{+} ,y^{-} ) \geq 0$, $s = c- A^{T}y$ and Lemma~\ref{lem-error_bound_lp}, we have
\[
	\|(x,y^{+},y^{-}) - (x^{*},(y^{*})^{+},(y^{*})^{-})\| \leq \tau (r(x,s)+w(x,s)),
\]
where $r(x,s)$ and $w(x,s)$ are defined in~\eqref{r_lp} and~\eqref{w_lp}, respectively. 
This and norm properties give
\[
	\max\left(\| x-x^{*} \|, \| y^{+}- (y^{*})^{+} \|,\| y^{-}- (y^{*})^{-} \| \right)\leq \tau (r(x,s)+w(x,s)),
\]
and so letting $\tau_{p} = \tau$, we deduce 
$
\| x-x^{*} \| \leq \tau_{p} (r(x,s)+w(x,s)).
$
Since $s^{*} = c - A^{T} y^{*}$, we also have
\[
\|s - s^{*}\| \leq \|A^{T}\|\|y - y^{*}\| \leq \|A^{T}\| (  \| y^{+}- (y^{*})^{+} \| + \| y^{-}- (y^{*})^{-} \| ) 
\leq \tau_{d} (r(x,s)+w(x,s)),
\]
where $\tau_{d}= 2\tau\|A\|$.
\qed \end{proof}

\paragraph{Proof of Lemma~\ref{lem-bound_x_s_strict_fea}.}
Since $\pntp \in \sfsp$,~\eqref{eqv:strictlyFeasibleSet_per} gives $Ax=b$ and $A^{T}y+s=c$. Then the result follows directly from Theorem~\ref{lem_errorbounds_lp}. 
\hfill $\Box$

\section{Proof of Lemma~\ref{lem-hat_epsilon_g_epsilon}}
\label{sec-compare_eps}
%\renewcommand{\theequation}{B.\arabic{equation}}
%\renewcommand{\thelemma}{B.\arabic{lemma}}
%\renewcommand{\thetheorem}{B.\arabic{theorem}}
%\resetcounters

Theorem~\ref{thm-preserving_actv} shows that we are able to preserve the optimal strict complementarity partition after perturbing the problems if the original~\eqref{eqv:originalPD} has a unique and nondegenerate solution. Actually, we can take a step further and show that then~\eqref{eqv:perPD} will also have a unique and nondegenerate~solution.

\begin{theorem}
\label{thm-perturbedProbHasUniqueSol}
Assume~\eqref{asm-full_row_rank} holds and the~\eqref{eqv:originalPD} problems have a unique and nondegenerate solution $\optSol$. Let $\Actv$ and $\Iactv$ denote the corresponding optimal active and inactive sets. Then there exists $\hat{\lambda}=\hat{\lambda}(A,b,c,x^{*},s^{*})>0$ such that the perturbed problems~\eqref{eqv:perPD} with $0 \leq \|\lambda\| < \hat{\lambda}$ have a unique and nondegenerate solution and the optimal active set is the same as that of the original~\eqref{eqv:originalPD} problems.
\end{theorem}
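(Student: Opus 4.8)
The plan is to piggyback on the explicit construction in the proof of Theorem~\ref{thm-preserving_actv} and then read off nondegeneracy and uniqueness from standard linear programming duality. Working with the equivalent form~\eqref{eqv:perPD_formPQ}, that proof already exhibits, for every $\lambda$ with $0\le\|\lambda\|<\hat{\lambda}$ (the same $\hat{\lambda}$ as there), a strictly complementary solution $\optSolp$ with $\hat{p}_{\Actv}=0$, $\hat{p}_{\Iactv}>0$, $\hat{q}_{\Iactv}=0$, $\hat{q}_{\Actv}>0$, where $(\Actv,\Iactv)$ is the optimal active/inactive partition of~\eqref{eqv:originalPD} and $|\Iactv|=m$ with $A_{\Iactv}$ nonsingular. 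Since $x+\lambda=p$, the perturbed bound $x_i+\lambda_i\ge 0$ is active precisely on $\Actv$, so this already shows the optimal active set is preserved; the remaining task is to prove this solution is nondegenerate and unique.

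For nondegeneracy I would simply count supports. The vector $\hat{p}$ has exactly $|\Iactv|=m$ positive entries, and the corresponding columns $A_{\Iactv}$ are linearly independent, so $\hat{p}$ is a nondegenerate basic feasible solution of the primal in~\eqref{eqv:perPD_formPQ}; symmetrically, $\hat{q}$ has exactly $|\Actv|=n-m$ positive entries, and the tight dual constraints (indexed by $\Iactv$) have linearly independent gradients $A_{\Iactv}$, so $(\hat{y},\hat{q})$ is a nondegenerate dual vertex. Hence $\optSolp$, and therefore $\optSollambda$ with $x^{*}_{\lambda}=\hat{p}-\lambda$ and $s^{*}_{\lambda}=\hat{q}-\lambda$, is a primal-dual nondegenerate solution of~\eqref{eqv:perPD}.

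For uniqueness I would give a short complementary-slackness argument (alternatively, one may invoke \cite[Theorem 4.5]{sierksma2001} applied to~\eqref{eqv:perPD_formPQ} now that a nondegenerate solution is in hand). Let $(\tilde{p},\tilde{y},\tilde{q})$ be any solution of~\eqref{eqv:perPD_formPQ}. Strong duality together with $A\hat{p}=A\tilde{p}=b_{\lambda}$ gives $\hat{p}^{T}\tilde{q}=c_{\lambda}^{T}\hat{p}-b_{\lambda}^{T}\tilde{y}=0$ and, symmetrically, $\tilde{p}^{T}\hat{q}=0$; combined with nonnegativity and the strict complementarity $\hat{p}+\hat{q}>0$, this forces $\tilde{q}_{\Iactv}=0$ and $\tilde{p}_{\Actv}=0$. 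Then $A_{\Iactv}\tilde{p}_{\Iactv}=b_{\lambda}$ with $A_{\Iactv}$ nonsingular yields $\tilde{p}_{\Iactv}=A_{\Iactv}^{-1}b_{\lambda}=\hat{p}_{\Iactv}$, the relation $A_{\Iactv}^{T}\tilde{y}=(c_{\lambda})_{\Iactv}$ yields $\tilde{y}=\hat{y}$, and then $\tilde{q}_{\Actv}=(c_{\lambda})_{\Actv}-A_{\Actv}^{T}\tilde{y}$ is determined as well, so $(\tilde{p},\tilde{y},\tilde{q})=\optSolp$. Transferring back to~\eqref{eqv:perPD} via the equivalence established in Section~\ref{prel} then completes the proof.

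The only real sticking point --- more bookkeeping than substance --- is ensuring the support counts are \emph{exactly} $m$ and $n-m$ rather than merely bounded, since that is precisely what upgrades ``an optimal point with the right support'' to ``a nondegenerate solution''. This hinges entirely on the original~\eqref{eqv:originalPD} being primal-dual nondegenerate, which guarantees $|\Iactv|=m$ and the invertibility of $A_{\Iactv}$ used throughout; everything else follows from complementary slackness and elementary linear algebra.
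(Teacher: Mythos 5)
Your proposal is correct and follows essentially the same route as the paper: both build on the solution $\optSolp$ constructed in Theorem~\ref{thm-preserving_actv}, use the nonsingularity of $A_{\Iactv}$ (guaranteed by nondegeneracy of the original problem) together with complementarity to force any other optimal solution to coincide with $\optSolp$, and read off nondegeneracy from the support structure $|\Iactv|=m$, $\hat{p}_{\Iactv}>0$, $\hat{q}_{\Actv}>0$. The only difference is cosmetic: you spell out the dual-side uniqueness and the nondegeneracy count explicitly, whereas the paper states these consequences tersely after establishing primal uniqueness.
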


\begin{proof}
We consider the equivalent perturbed problem~\eqref{eqv:perPD_formPQ}. From Theorem~\ref{thm-preserving_actv}, we know there exists a $\hat{\lambda}(A,b,c,x^{*},s^{*}) > 0$ such that~\eqref{eqv:perPD_formPQ} with $0 \leq \|\lambda\| < \hat{\lambda}$ has a strictly complementary solution $ \optSolp$ with the same optimal active and inactive sets $\Actv$ and $\Iactv$, namely we have
\[
\hat{p}_{\Iactv} > 0, \quad \hat{p}_{\Actv} = 0, \quad \hat{q}_{\Actv} > 0, \quad \text{and} \quad \hat{q}_{\Iactv} = 0,
\]
and also
\begin{equation}
\label{eqv:Ap_I_b}
	A_{\Iactv}\hat{p}_{\Iactv} = b_{\lambda}.
\end{equation}
Next we are about to show that $ \optSolp$ is the unique solution of~\eqref{eqv:perPD_formPQ}. Assume there exists another solution $\bar{p} \neq \hat{p}$. 
Then $(\bar{p},\hat{y},\hat{q})$ satisfies the optimality conditions~\eqref{eqv:kkt_perturbed_form2}.
From the complementarity equations (the third term) in~\eqref{eqv:kkt_perturbed_form2} and $\hat{q}_{\Actv} > 0$ we have $\bar{p}_{\Actv} = 0 = \hat{p}_{\Actv}$. Then we have
$
A_{\Iactv}\bar{p}_{\Iactv} = b_{\lambda}.
$
It follows from this and~\eqref{eqv:Ap_I_b} that
$
A_{\Iactv} ( \bar{p}_{\Iactv} - \hat{p}_{\Iactv} ) = 0.
$
As the~\eqref{eqv:originalPD} solution is unique and nondegenerate, we must have $|\Iactv| = m$ and $rank(A_{\Iactv}) = m$, namely, $A_{\Iactv}$ is nonsingular, which implies $\hat{p}_{\Iactv} = \bar{p}_{\Iactv}$. Then~\eqref{eqv:perPD_formPQ} has a unique and nondegenerate primal solution, which also implies unique and nondegenerate dual solution.
\qed \end{proof}
To prove Lemma~\ref{lem-hat_epsilon_g_epsilon}, we also need the following series of useful lemmas.

\begin{lemma}[{Farkas' Lemma~\cite[Lemma 5.1]{Bazaraa}}]
\label{lem-FarkasLemma}
One and only one of the following two systems has a solution:
\begin{eqnarray*}
\mbox{System 1:} & Tw \geq 0 & \mbox{and} \quad b^{T}w<0, \\
\mbox{System 2:} & T^{T}y=b & \mbox{and} \quad y \geq 0,
\end{eqnarray*}
where $T \in \Real^{m \times n}$, $b \in \Real^{m}$, $w \in \Real^{n}$ and $y\in \Real^{m}$.
\end{lemma}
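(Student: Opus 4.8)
}
The plan is the standard two-part argument, handling the ``not both'' part and the ``at least one'' part separately. The first part is immediate: if $w$ satisfies $Tw \ge 0$ and $y \ge 0$ satisfies $T^{T}y = b$, then $b^{T}w = (T^{T}y)^{T}w = y^{T}(Tw) \ge 0$, which contradicts $b^{T}w < 0$; hence at most one of the two systems can have a solution.

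For the converse I would show that whenever System~2 is infeasible, System~1 is feasible. The device is the finitely generated cone $C = \{\, T^{T}y : y \ge 0 \,\}$, the conical hull of the rows of $T$ (written as columns). First I would establish that $C$ is closed and convex: convexity is trivial, and closedness I would obtain by a Carath\'eodory-type reduction, expressing any point of $C$ as a nonnegative combination of a linearly independent subset of the generators so that $C$ becomes a finite union of images of closed orthants under injective linear maps, hence closed. Given this, infeasibility of System~2 means precisely $b \notin C$, so the separating hyperplane theorem for a point and a disjoint nonempty closed convex set produces a vector $w$ with $b^{T}w < \inf_{v \in C} v^{T}w$. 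Since $0 \in C$ this infimum is $\le 0$, so $b^{T}w < 0$; and since $C$ is a cone, $v \in C$ forces $tv \in C$ for every $t > 0$, which in turn forces $v^{T}w \ge 0$ for all $v \in C$. Taking $v$ to be each generator in turn yields $(Tw)_{i} \ge 0$ for every $i$, i.e.\ $Tw \ge 0$, so $w$ solves System~1.

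I expect the only real obstacle to be the closedness of the finitely generated cone $C$: the separating hyperplane theorem genuinely needs a closed set, and $C$ is not obviously closed a priori, so the Carath\'eodory reduction (or an equivalent inductive argument) is the crux. A fully elementary alternative that sidesteps closedness entirely is Fourier--Motzkin elimination applied directly to System~1, eliminating the components of $w$ one at a time until a trivial system remains; this is more computational but self-contained. Since this is a classical result, in the paper we simply cite~\cite[Lemma 5.1]{Bazaraa}.
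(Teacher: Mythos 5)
Your proposal is correct, and in fact the paper does not prove this lemma at all --- it is simply quoted from~\cite[Lemma 5.1]{Bazaraa} and used as a black box in the proof of Lemma~\ref{lem-predictSoonerLemma2}, so your closing remark matches exactly what the paper does. The argument you sketch is the classical one and is sound in both directions: the ``at most one'' half is the one-line computation $b^{T}w = y^{T}(Tw) \geq 0$, and for the ``at least one'' half you correctly identify the only genuine subtlety, namely closedness of the finitely generated cone $C = \{T^{T}y : y \geq 0\}$, and handle it by the standard Carath\'eodory reduction to images of closed orthants under injective linear maps; strict separation of $b$ from the closed convex cone $C$, combined with $0 \in C$ and the cone property, then yields $b^{T}w < 0$ and $Tw \geq 0$ as you state. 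The Fourier--Motzkin alternative you mention is likewise a valid, fully elementary substitute. No gaps.
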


\begin{lemma}
\label{lem-predictSoonerLemma2}
Given $i \in \{1,\ldots,n\}$, the following system
\[
\begin{dcases}
   y+Ax \geq 0\\
   x-A^{T}y \geq 0 \quad \mbox{and} \quad x_{i} - A^{T}_{i}y > 0\\
   (x,y) \geq 0
\end{dcases}
\]
always has a solution, where $A \in \Real^{m \times n}$, $x \in \Real^{n}$, $y\in \Real^{m}$ and $A_{i}$ is the $i^{th}$ column of $A$.
\end{lemma}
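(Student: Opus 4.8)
Proof plan. The plan is to deduce the statement from Farkas' Lemma (Lemma~\ref{lem-FarkasLemma}) by recognising the system as its ``System~1'' for a suitable pair $(T,b)$, and then showing that the alternative ``System~2'' is infeasible. Collecting the unknowns into $w=(x,y)\in\Real^{n+m}$, I would take $b=(-e_i,A_i)\in\Real^{n+m}$, where $e_i\in\Real^{n}$ is the $i$-th coordinate vector, and let $T\in\Real^{(2n+2m)\times(n+m)}$ be the matrix whose rows are those of $I_{n+m}$, of $[\,A\ \ I_m\,]$, and of $[\,I_n\ \ -A^{T}\,]$, in that order. Then $Tw\ge 0$ is precisely the conjunction of $(x,y)\ge 0$, $y+Ax\ge 0$ and $x-A^{T}y\ge 0$, while $b^{T}w=-x_i+A_i^{T}y<0$ is precisely $x_i-A_i^{T}y>0$. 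Hence the system in the statement is exactly System~1 for this $(T,b)$, and by Lemma~\ref{lem-FarkasLemma} it suffices to show that System~2, namely $T^{T}\zeta=b$ with $\zeta\ge 0$, has no solution.

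To that end I would argue by contradiction. Suppose $\zeta\ge 0$ solves $T^{T}\zeta=b$, and partition $\zeta=(\alpha,\beta,\gamma,\delta)$ conformally with the row blocks of $T$: $\alpha\in\Real^{n}_{+}$ and $\beta\in\Real^{m}_{+}$ multiply the $w\ge 0$ block, $\gamma\in\Real^{m}_{+}$ multiplies $y+Ax\ge 0$, and $\delta\in\Real^{n}_{+}$ multiplies $x-A^{T}y\ge 0$. Computing $T^{T}\zeta$ block by block, the equation $T^{T}\zeta=b$ becomes
\[
\alpha+A^{T}\gamma+\delta=-e_i\qquad\text{and}\qquad \beta+\gamma-A\delta=A_i .
\]
Reading the first identity componentwise and using $\alpha,\delta\ge 0$ gives $A^{T}\gamma\le 0$ with $(A^{T}\gamma)_i=-1-\alpha_i-\delta_i\le -1$; in particular $\gamma\ne 0$ and $\gamma^{T}A_i=(A^{T}\gamma)_i\le -1$.

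The decisive step is then to evaluate the scalar $\gamma^{T}A\delta$ in two ways. On one hand, since $A^{T}\gamma\le 0$ and $\delta\ge 0$ we get $\gamma^{T}A\delta=(A^{T}\gamma)^{T}\delta\le 0$. On the other hand, substituting $A\delta=\beta+\gamma-A_i$ from the second identity yields $\gamma^{T}A\delta=\gamma^{T}\beta+\|\gamma\|^{2}-\gamma^{T}A_i\ge \|\gamma\|^{2}+1>0$, using $\gamma^{T}\beta\ge 0$, $\gamma\ne 0$ and $\gamma^{T}A_i\le -1$. This contradiction shows System~2 is infeasible, and the claim follows from Lemma~\ref{lem-FarkasLemma}. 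I expect the only genuine obstacle to be getting the Farkas template set up correctly — arranging $T$ and $b$ so that System~1 reproduces the given system verbatim and keeping careful track of the four multiplier blocks in System~2; once that is done, the contradiction drops out of the elementary ``double count'' of $\gamma^{T}A\delta$, and the remaining manipulations are routine.
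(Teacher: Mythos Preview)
Your proposal is correct and follows essentially the same route as the paper: set up the system as Farkas' System~1, assume the alternative System~2 has a nonnegative solution, and derive a contradiction by a ``double count'' of the scalar $\gamma^{T}A\delta$ (the paper's analogue is multiplying its first equation by $u_1^{T}$, with $u_1$ playing the role of your $\gamma$). The only cosmetic difference is that the paper partitions $x=(x_1,\bar{x})$ and omits the redundant non-strict $i$-th row of $x-A^{T}y\ge 0$ from $T$, whereas you keep all $n$ rows; this adds one extra multiplier $\delta_i$ but changes nothing in the argument.
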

\begin{proof}\
Without losing generality, we can choose $i=1$. Partition $x$ and $A$ as
$ x = \left[\, x_{1} \quad \bar{x}^{T} \,\right]^{T} $ and 
$
A =
\begin{bmatrix}
A_{1} & \bar{A}
\end{bmatrix},
$
where $ \bar{x} = \left[\, x_{2} \quad \dots \quad x_{n} \,\right]^{T} $ and 
$
\bar{A} =
\begin{bmatrix}
A_{2} & \dots & A_{n}
\end{bmatrix}.
$

We need to prove the following system has a solution
\begin{equation}
\label{eqv:applyFarkasLemma1}
\left\{
\begin{array}{ccccccc}
y                     & + & A_{1}x_{1} & + & \bar{A} \bar{x} & \geq  & 0  \\
-\bar{A}^{T} y &    &                   &  + & \bar{x}             & \geq & 0  \\
y                     &    &                   &     & 			 & \geq & 0  \\
		      &    & x_{1}	         &    &	      	   	 & \geq & 0  \\
		      &	    &		         &    & \bar{x}              & \geq & 0  \\
A_{1}^{T} y     &  - & x_{1}          &    &           		 &  <    & 0
\end{array} \right..
\end{equation}

From Lemma~\ref{lem-FarkasLemma}, we know~\eqref{eqv:applyFarkasLemma1} has a solution if and only if
\begin{equation}
\label{eqv:applyFarkasLemma2}
\left\{
\begin{array}{c}
\begin{bmatrix}
I_{m} & -\bar{A} & I_{m} & 0 & 0 \\
A^{T}_{1} & 0 & 0 & 1 & 0\\
\bar{A}^{T} & I_{n-1} & 0 & 0 & I_{n-1} \\
\end{bmatrix}
\begin{bmatrix}
u_{1} \\
u_{2} \\
u_{3} \\
u_{4} \\
u_{5}
\end{bmatrix}
=
\begin{bmatrix}
A_{1} \\
-1 \\
0
\end{bmatrix}    \\
(u_{1},u_{2},u_{3},u_{4},u_{5}) \geq 0
\end{array} \right.,
\end{equation}
has no solution, where $u_{1} \in \Real^{m}$, $u_{2} \in \Real^{n-1}$, $u_{3} \in \Real^{m}$, $u_{4} \in \Real$ and $u_{5} \in \Real^{n-1}$.

Assume~\eqref{eqv:applyFarkasLemma2} has a solution $(u_{1},u_{2},u_{3},u_{4},u_{5}) \geq 0$. Then we get
\begin{subequations}
\begin{align}
u_{1} - \bar{A}u_{2} - A_{1}  = -u_{3} \leq 0,  \label{eqv:ApplyFarkasLemma-buildContradiction1}   \\
A^{T}_{1}u_{1}  = -1 - u_{4} <0, \label{eqv:ApplyFarkasLemma-buildContradiction2}   \\
\bar{A}^{T} u_{1}  = -u_{2} - u_{5} \leq 0. \label{eqv:ApplyFarkasLemma-buildContradiction3}
\end{align}
\end{subequations}

Multiplying both sides of~\eqref{eqv:ApplyFarkasLemma-buildContradiction1} by $u_{1}^{T} \geq 0$, we have
\[
u^{T}_{1}u_{1} - (\bar{A}^{T}u_{1})^{T}u_{2} - A^{T}_{1}u_{1} = -u^{T}_{1}u_{3}.
\]

From~\eqref{eqv:ApplyFarkasLemma-buildContradiction2},~\eqref{eqv:ApplyFarkasLemma-buildContradiction3} and the nonnegativity of the variables, we know
\[
u^{T}_{1}u_{1} - (\bar{A}^{T}u_{1})^{T}u_{2} - \bar{A}^{T}_{1}u_{1} > 0
\quad\text{but}\quad
-u^{T}_{1}u_{3} \leq 0.
\]
Thus~\eqref{eqv:applyFarkasLemma2} has no solution, which implies~\eqref{eqv:applyFarkasLemma1} has a solution.
\qed \end{proof}

\begin{lemma}
\label{lem-predictSoonerLemma3}
The system
\begin{equation}
\label{eqv:predictSoonerLemma3}
\begin{dcases}
     y+Ax \geq 0\\
     x-A^{T}y > 0\\
     (x,y) \geq 0
\end{dcases}
\end{equation}
always has a solution, where $A \in \Real^{m \times n}$, $x \in \Real^{n}$ and $y\in \Real^{m}$.
\end{lemma}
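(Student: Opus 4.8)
The plan is to obtain a solution of~\eqref{eqv:predictSoonerLemma3} by \emph{superposing} the $n$ solutions guaranteed by Lemma~\ref{lem-predictSoonerLemma2}. First, for each index $i \in \{1,\ldots,n\}$, Lemma~\ref{lem-predictSoonerLemma2} provides a pair $(x^{(i)},y^{(i)})$ satisfying $x^{(i)} \geq 0$, $y^{(i)} \geq 0$, $y^{(i)} + A x^{(i)} \geq 0$, $x^{(i)} - A^{T}y^{(i)} \geq 0$, and additionally $x^{(i)}_{i} - A^{T}_{i}y^{(i)} > 0$ (the $i$-th coordinate of $x^{(i)} - A^{T}y^{(i)}$ is strictly positive). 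I would then set $x = \sum_{i=1}^{n} x^{(i)}$ and $y = \sum_{i=1}^{n} y^{(i)}$ and verify that $(x,y)$ solves~\eqref{eqv:predictSoonerLemma3}.

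The verification is immediate for two of the three requirements: $(x,y) \geq 0$ and $y + Ax = \sum_{i=1}^{n}\bigl(y^{(i)} + A x^{(i)}\bigr) \geq 0$, each being a sum of nonnegative vectors. For the strict inequality, write $w^{(i)} = x^{(i)} - A^{T}y^{(i)} \geq 0$, so that $x - A^{T}y = \sum_{i=1}^{n} w^{(i)}$; then for every $k \in \{1,\ldots,n\}$ the $k$-th component satisfies $\bigl(x - A^{T}y\bigr)_{k} = \sum_{i=1}^{n} w^{(i)}_{k} \geq w^{(k)}_{k} > 0$, since all the $w^{(i)}_{k}$ are nonnegative and the single term $w^{(k)}_{k}$ is strictly positive. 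Hence $x - A^{T}y > 0$ componentwise, and $(x,y)$ is the desired solution.

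I do not expect a genuine obstacle here: the only idea required is this superposition trick, which upgrades the ``one strictly positive coordinate at a time'' conclusion of Lemma~\ref{lem-predictSoonerLemma2} to ``all coordinates strictly positive simultaneously.'' An alternative route would be to re-run the Farkas'-lemma argument used for Lemma~\ref{lem-predictSoonerLemma2} directly with the full strict requirement $x - A^{T}y > 0$, but that duplicates work and is messier, so the summation argument above is the cleaner choice.
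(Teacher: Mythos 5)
Your proposal is correct and is essentially the paper's own proof: the paper also sums the $n$ pairs furnished by Lemma~\ref{lem-predictSoonerLemma2}, using that each summand of $x - A^{T}y$ is nonnegative while the $k$-th summand contributes a strictly positive $k$-th coordinate. Your componentwise justification of the strict inequality matches the paper's displayed computation, so there is nothing to add.
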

\begin{proof}
From Lemma~\ref{lem-predictSoonerLemma2}, we know for any $i \in \{1,\ldots,n\}$, there exists $(x^{i}, y^{i})  \geq 0$ where $x^{i} \in \Real^{n}$ and $y^{i} \in \Real^{m}$, such that
\begin{equation}
\label{eqv:for_proof_tmp}
\begin{dcases}
     y^{i}+Ax^{i} \geq 0,\\
     x^{i}-A^{T}y^{i} \geq 0 \quad \mbox{and} \quad x^{i}_{i} - A^{T}_{i}y^{i} > 0.\\
\end{dcases}
\end{equation}
Set $\hat{x} = \sum_{i=1}^{n} x^{i} \geq 0$ and $\hat{y} = \sum_{i=1}^{n}y^{i} \geq 0$.
Then from~\eqref{eqv:for_proof_tmp}, we have
\[
\hat{y} + A\hat{x}  = \sum_{i=1}^{n}y^{i}  + A(\sum_{i}^{n}x^{i}) = \sum_{i=1}^{n}(y^{i}+Ax^{i}) \geq 0,
\]
and
\[
\hat{x} - A^{T}\hat{y} = \sum_{i=1}^{n}(x^{i} - A^{T}y^{i}) =
\begin{bmatrix}
(x^{1}_{1} - A_{1}^{T}y^{1}) + (x^{2}_{1} - A_{1}^{T}y^{2}) + \cdots + (x^{n}_{1} - A_{1}^{T}y^{n}) \\
\vdots \\
(x^{1}_{n} - A_{n}^{T}y^{1}) + (x^{2}_{n} - A_{n}^{T}y^{2}) + \cdots + (x^{n}_{n} - A_{n}^{T}y^{n})
\end{bmatrix} > 0.
\]
\qed \end{proof}

\begin{lemma}
\label{lem-predictSoonerLemma4}
The system
\begin{equation}
\label{eqv:predictSoonerLemma4}
\begin{dcases}
     y+Ax > 0\\
     x-A^{T}y \geq 0\\
     (x,y) \geq 0
\end{dcases}
\end{equation}
always has a solution, where $A \in \Real^{m \times n}$, $x \in \Real^{n}$ and $y\in \Real^{m}$.
\end{lemma}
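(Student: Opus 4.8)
The plan is to deduce this statement from Lemma~\ref{lem-predictSoonerLemma3}, which has already been established, by exploiting the transpose-and-sign-flip symmetry between the two systems. Concretely, I would apply Lemma~\ref{lem-predictSoonerLemma3} not to $A$ but to the matrix $\tilde{A} := -A^{T} \in \Real^{n\times m}$, so that the roles of $m$ and $n$ are interchanged; note that $-A^{T}$ is a perfectly arbitrary $n\times m$ matrix, so the hypothesis of that lemma is met without restriction. Lemma~\ref{lem-predictSoonerLemma3} then provides vectors $\tilde{x}\in\Real^{m}$ and $\tilde{y}\in\Real^{n}$, both nonnegative, with $\tilde{y}+\tilde{A}\tilde{x}\geq 0$ and $\tilde{x}-\tilde{A}^{T}\tilde{y}>0$. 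Substituting $\tilde{A}=-A^{T}$ and $\tilde{A}^{T}=-A$, these read $\tilde{y}-A^{T}\tilde{x}\geq 0$ and $\tilde{x}+A\tilde{y}>0$.

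The remaining step is a relabelling: set $x:=\tilde{y}\in\Real^{n}$ and $y:=\tilde{x}\in\Real^{m}$. Then $\tilde{x}+A\tilde{y}>0$ becomes $y+Ax>0$; $\tilde{y}-A^{T}\tilde{x}\geq 0$ becomes $x-A^{T}y\geq 0$; and nonnegativity of $(\tilde{x},\tilde{y})$ becomes nonnegativity of $(x,y)$. Hence $(x,y)$ solves~\eqref{eqv:predictSoonerLemma4}, completing the argument.

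Alternatively, for a self-contained proof paralleling that of Lemma~\ref{lem-predictSoonerLemma3}, one would first establish a componentwise version: for each fixed $j\in\{1,\ldots,m\}$, the system $y+Ax\geq 0$ with $j$-th component strictly positive, together with $x-A^{T}y\geq 0$ and $(x,y)\geq 0$, has a solution $(x^{j},y^{j})$. This is proved exactly as Lemma~\ref{lem-predictSoonerLemma2}: transcribe the desired system into the form of System~1 in Farkas' Lemma (Lemma~\ref{lem-FarkasLemma}) and show the corresponding System~2 is infeasible by a positivity/contradiction argument on the candidate multipliers. Then $\hat{x}=\sum_{j=1}^{m}x^{j}$ and $\hat{y}=\sum_{j=1}^{m}y^{j}$ satisfy $\hat{y}+A\hat{x}>0$ (each component being a sum of nonnegative terms, at least one strictly positive) and $\hat{x}-A^{T}\hat{y}\geq 0$, as required.

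I do not anticipate a genuine obstacle: the reduction route is essentially a change of variables, and the only point requiring (minor) care is bookkeeping of which dimension plays the role of ``$m$'' and which plays ``$n$'' when Lemma~\ref{lem-predictSoonerLemma3} is invoked with the transposed, sign-flipped matrix.
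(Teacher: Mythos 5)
Your proposal is correct and is essentially the paper's own proof: the paper disposes of this lemma in one line by applying Lemma~\ref{lem-predictSoonerLemma3} with $A$ replaced by $-A^{T}$, which is exactly your reduction, with the variable relabelling you spell out being the implicit bookkeeping. The alternative self-contained route you sketch would also work but is not needed.
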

\begin{proof}
Replace A in Lemma~\ref{lem-predictSoonerLemma3} by $-A^{T}$.
\qed \end{proof}

\begin{lemma}
\label{lem-predictSoonerLemma5}
The system
\[
\begin{dcases}
     y+Ax > 0\\
     x-A^{T}y > 0\\
     (x,y) \geq 0.
\end{dcases}
\]
always has a solution, where $A \in \Real^{m \times n}$,  $x \in \Real^{n}$ and $y\in \Real^{m}$.
\end{lemma}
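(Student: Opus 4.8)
The plan is to obtain the required point by superposing the two feasible points already constructed in Lemmas~\ref{lem-predictSoonerLemma3} and~\ref{lem-predictSoonerLemma4}, exploiting the elementary fact that adding a componentwise nonnegative vector to a componentwise strictly positive vector preserves strict positivity --- exactly the device used in the proof of Lemma~\ref{lem-predictSoonerLemma3} itself.

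Concretely, I would first invoke Lemma~\ref{lem-predictSoonerLemma3} to get a point $(x^{1},y^{1})\in\Real^{n}\times\Real^{m}$ with $(x^{1},y^{1})\geq 0$, $y^{1}+Ax^{1}\geq 0$ and $x^{1}-A^{T}y^{1}>0$, and then Lemma~\ref{lem-predictSoonerLemma4} to get a point $(x^{2},y^{2})$ with $(x^{2},y^{2})\geq 0$, $y^{2}+Ax^{2}>0$ and $x^{2}-A^{T}y^{2}\geq 0$. Setting $\hat{x}=x^{1}+x^{2}$ and $\hat{y}=y^{1}+y^{2}$, one clearly has $(\hat{x},\hat{y})\geq 0$, and by linearity
\[
\hat{y}+A\hat{x}=(y^{1}+Ax^{1})+(y^{2}+Ax^{2})>0,
\]
since the first summand is nonnegative and the second strictly positive, and likewise
\[
\hat{x}-A^{T}\hat{y}=(x^{1}-A^{T}y^{1})+(x^{2}-A^{T}y^{2})>0,
\]
now because the first summand is strictly positive and the second nonnegative. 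Hence $(\hat{x},\hat{y})$ satisfies all three relations of the system, which is what we want.

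As this statement is an immediate corollary of the two preceding lemmas, I do not expect any genuine obstacle; the only point deserving a moment's attention is bookkeeping --- keeping track of which of the two intermediate systems supplies the strict inequality in the $y+Ax$ block and which supplies it in the $x-A^{T}y$ block, so that the sum is strictly positive in \emph{both} blocks simultaneously. The substantive work (the Farkas'-lemma argument) was already carried out in Lemma~\ref{lem-predictSoonerLemma2}, and Lemmas~\ref{lem-predictSoonerLemma3}--\ref{lem-predictSoonerLemma4} and the present one all follow from it by variants of this same summation trick.
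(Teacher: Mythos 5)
Your proposal is correct and is essentially identical to the paper's own proof: both take a point from Lemma~\ref{lem-predictSoonerLemma3} (strict in the $x-A^{T}y$ block) and a point from Lemma~\ref{lem-predictSoonerLemma4} (strict in the $y+Ax$ block) and add them, using that a strictly positive vector plus a nonnegative vector is strictly positive. No differences worth noting.
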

\begin{proof}
From Lemmas~\ref{lem-predictSoonerLemma3} and~\ref{lem-predictSoonerLemma4}, we know there exist $(\hat{x}, \hat{y})$ and $(\tilde{x},\tilde{y})$ such that~\eqref{eqv:predictSoonerLemma3} and~\eqref{eqv:predictSoonerLemma4} hold respectively. 
Set $\bar{x} = \hat{x}+\tilde{x}$ and $\bar{y} = \hat{y}+\tilde{y}$ and deduce
\[
\bar{y}+A\bar{x}  = \underbrace{(\hat{y} + A \hat{x})}_{\geq 0}+\underbrace{(\tilde{y}+A\tilde{x})}_{>0} >0
\quad \text{and} \quad
\bar{x} - A^{T}\bar{y}  = \underbrace{(\hat{x} - A^{T} \hat{y})}_{>0}+\underbrace{(\tilde{x}-A^{T}\tilde{y})}_{\geq 0} >0.
\]
\qed \end{proof}

\paragraph{Proof of Lemma~\ref{lem-hat_epsilon_g_epsilon}.}
Assume $\Actv$ and $\Iactv$ are the optimal active and inactive sets at the unique solution $\optSol$ of~\eqref{eqv:originalPD}. Then from~\eqref{eqv:eps}, we have 
\begin{equation}
\label{eqv:simple_eps_A_b_c}
 \epsilon(A,b,c) = \min\left(\min_{i \in \Iactv}(x^{*}_{i}),\min_{i \in \Actv }(s^{*}_{i})\right).
 \end{equation}
From Theorem~\ref{thm-perturbedProbHasUniqueSol}, we know there exists a $\hat{\lambda} = \hat{\lambda} (A,b,c,x^{*},s^{*})$ such that~\eqref{eqv:perPD} with $0 \leq \|\lambda\| < \hat{\lambda} $ has a unique and nondegenerate solution and $\Actv$ and $\Iactv$ are the optimal active and inactive sets.
Since $\optSolp$ defined in~\eqref{eqv:uniqueSolpyq} is a solution of~\eqref{eqv:perPD_formPQ}, $\optSollambda = ( \hat{p} - \lambda, \hat{y}, \hat{q} - \lambda)$ is a solution of~\eqref{eqv:perPD} and also unique, with $\Actv$ and $\Iactv$ being the optimal active and inactive sets.
This and~\eqref{eqv:eps_hat} give
\begin{equation}
\label{eqv:simple_A_hat_b_hat_c}
\epsilon(A,b_{\lambda},c_{\lambda}) =  \min\left(\min_{i \in \Iactv}(\hat{x}_{i} + \lambda_{i}),\min_{i \in \Actv }(\hat{s}_{i} + \lambda_{i})\right).
\end{equation}
From~\eqref{eqv:uniqueSolpyq}, recalling that $\hat{p} = \hat{x} + \lambda$ and $\hat{q} = \hat{s} + \lambda$,  we have 
\[
	( \hat{x}_{\Iactv} +\lambda_{\Iactv} ) - x^{*}_{\Iactv} =  \lambda_{\Iactv} + A^{-1}_{\Iactv}A_{\Actv}\lambda_{\Actv}
	\quad\text{and}\quad
	( \hat{s}_{\Actv} + \lambda_{\Actv} )- s^{*}_{\Actv} = \lambda_{\Actv} - (A_{\Iactv}^{-1}A_{\Actv})^{T}\lambda_{\Iactv}.
\]
This,~\eqref{eqv:simple_eps_A_b_c} and~\eqref{eqv:simple_A_hat_b_hat_c} give us that $\epsilon(A,b_{\lambda},c_{\lambda}) > \epsilon(A,b,c)$, provided 
\begin{equation}
\label{eqv:epsilon}
\begin{dcases}
     \lambda_{\Iactv}+A_{\Iactv}^{-1}A_{\Actv}\lambda_{\Actv} > 0\\
    \lambda_{\Actv} - (A_{\Iactv}^{-1}A_{\Actv})^{T} \lambda_{\Iactv} > 0\\
     \lambda_{\Iactv}, \lambda_{\Actv} \geq 0
\end{dcases}.
\end{equation}
It remains to find a solution of \eqref{eqv:epsilon} whose norm is less than $\hat{\lambda}$. 
From Lemma~\ref{lem-predictSoonerLemma5}, we know \eqref{eqv:epsilon} always has a solution, say $\bar{\lambda}$. 
Since \eqref{eqv:epsilon} is homogeneous, $\frac{\hat{\lambda}}{2 \|\bar{\lambda}\|}\bar{\lambda}$ is also a solution, and $\| \frac{\hat{\lambda}}{2 \|\bar{\lambda}\|}\bar{\lambda} \| < \hat{\lambda}$. Without losing generality, we denote this solution as $\bar{\lambda}$. Furthermore, \eqref{eqv:epsilon} holds for all $\lambda$ with $0<\lambda = \alpha \bar{\lambda}<\bar{\lambda}$ where $\alpha \in (0,1)$. \hfill $\Box$

\section{An Active-set Prediction Procedure}
\label{apd:ActivesetPredictionProcedure}
%\resetcounters
%\renewcommand{\thealgorithm}{C.\arabic{algorithm}}
Note that in Procedure~\ref{alg:actvPrediction}, $\Actv^{k}$  is the predicted active set, $\Iactv^{k}$, the predicted inactive set and $\Nactv^{k} = \{ 1,2,\ldots,n  \} \backslash \left( {\Actv}^{k} \cup {\Iactv}^{k}\right)$, the set of all undetermined indices at the $k^{th}$ iteration.
\begin{algorithm}[htb]
\floatname{algorithm}{Procedure}
\caption{An Active-set Prediction Procedure}
\label{alg:actvPrediction}
\begin{algorithmic}
	\STATE{\textbf{Initialise}: $A^{0} = \Iactv^{0} = \emptyset$ and $\Nactv^{0} = \{  1,2,\dots,n \}$.}
	\STATE{\textbf{At $k^{th}$ iteration, $k > 1$},}
	\FOR{$i = 1,\dots,n$}
		\IF{$i \in \Nactv^{k}$}
			\IF{the threshold test~\eqref{eqv:actvPredctImplmtnCriteria} is satisfied for iterations $k-1$ and $k$}
				\STATE{$\Actv^{k} = \Actv^{k} \cup \{i\}$ and $\Nactv^{k} = \Nactv^{k}\backslash\{i\}$;}
			\ELSE
				\STATE{$\Iactv^{k} = \Iactv^{k} \cup \{i\}$ and $\Nactv^{k} = \Nactv^{k}\backslash\{i\}$.}
			\ENDIF
                 \ENDIF
		\IF{$i \in \Actv^{k}$ and the threshold test is not satisfied}
			\STATE{$\Actv^{k} = \Actv^{k} \backslash \{i\}$ and $\Nactv^{k} = \Nactv^{k}\cup\{i\}$;}
		\ENDIF
		\IF{$i \in \Iactv^{k}$ and the threshold test is satisfied}
			\STATE{$\Iactv^{k} = \Iactv^{k} \backslash \{i\}$ and $\Nactv^{k} = \Nactv^{k}\cup\{i\}$.}
		\ENDIF
	\ENDFOR
\end{algorithmic}
\end{algorithm}

\section{Results for crossover to simplex on selected Netlib problems}
%\resetcounters
%\renewcommand{\thetable}{D.\arabic{table}}

From the left to the right, we give 
the name of the test problems, number of equality constraints, 
number of variables, the value of duality gap $\mup^{K}$ when we terminate the (perturbed) Algorithm~\ref{alg:perAlg}, 
the value of duality gap $\mu^{K}$ when we terminate  the (unperturbed)  Algorithm~\ref{alg:unperAlg}, 
number of {\ipm} iterations, 
the relative difference (see Footnote~\ref{fot-relative_basis__diff} on Page~\pageref{fot-relative_basis__diff}) between two bases generated from Algorithms~\ref{alg:perAlg} and~\ref{alg:unperAlg}, 
simplex iterations for Algorithm~\ref{alg:perAlg} 
and the simplex iterations for  Algorithm~\ref{alg:unperAlg}. 
Since the algorithm without perturbations is terminated at the same {\ipm} iteration as Algorithm~\ref{alg:perAlg}, we show only the number of {\ipm} iterations for the latter. Problems on which Algorithm~\ref{alg:perAlg} loses are marked in bold font. `---' means the simplex solver fails for a particular test problem.

\begin{table}[htb]
\centering
\caption{Crossover to simplex test on a selection of Netlib problems}
\label{tab-sumCrossover} 
\resizebox{\textwidth}{!}{
\begin{tabular}{| l | l | l | l | l | l | l | l | l |}
\hline
\textbf{Probs} 
    & {\textbf{m}} %
    & {\textbf{n}} %
    & {\textbf{$\mu^{K}_{\lambda}$}} %
    & {\textbf{$\mu^{K}$}} %
    & {\textbf{IPM Iter}} %
    & {\textbf{Basis Diff}} %
    & {\textbf{splxIter Per}} %
    & {\textbf{splxIter Unp}} \\ % 
  
\hline \hline
    25FV47 &  798 & 1854 &  9.38e-04 &  1.34e-03 &        35 &      0.15 &      4193 &      6951 \\ 
  ADLITTLE &   55 &  137 &  3.79e-04 &  2.23e-04 &        16 &      0.45 &        18 &       119 \\ 
     AFIRO &   27 &   51 &  3.68e-04 &  6.84e-06 &        11 &      0.07 &         9 &         9 \\ 
     AGG3* &  516 &  758 &  9.05e-02 &  6.39e-02 &        25 &      0.07 &       112 &       123 \\ 
     BLEND &   74 &  114 &  6.55e-04 &  7.21e-04 &        10 &      0.37 &        35 &        59 \\ 
      BNL1 &  632 & 1576 &  5.41e-04 &  1.96e-02 &        28 &      0.31 &      1583 &      1632 \\ 
    BRANDY &  149 &  259 &  4.83e-04 &  1.09e-03 &        18 &      0.38 &        76 &       278 \\ 
    CZPROB &  737 & 3141 &  4.00e-04 &  1.67e-04 &        56 &      0.77 &       106 &      1822 \\ 
\textbf{E226} &  220 &  469 &  6.13e-04 &  6.98e-04 &        18 &      0.54 &       428 &       319 \\ 
     FIT1D & 1050 & 2075 &  4.81e-04 &  2.00e-04 &        22 &      0.39 &        53 &       787 \\ 
     FIT1P & 1026 & 2076 &  5.55e-04 &  4.04e-04 &        20 &      0.36 &       259 &       760 \\ 
  FORPLAN* &  157 &  485 &  4.67e-03 &  1.33e-02 &        29 &      0.45 &       119 &       341 \\ 
\textbf{GROW7*} &  420 &  581 &  4.56e-02 &  5.56e-02 &        15 &      0.06 &       226 &       190 \\ 
\textbf{ISRAEL*} &  174 &  316 &  5.39e-02 &  1.87e-02 &        32 &      0.01 &       164 &       143 \\ 
\textbf{KB2} &   52 &   77 &  3.83e-04 &  1.15e-02 &        21 &      0.24 &        44 &        27 \\ 
     SC50A &   49 &   77 &  1.64e-04 &  6.42e-05 &        10 &      0.12 &        22 &        27 \\ 
     SC50B &   48 &   76 &  5.37e-04 &  1.59e-04 &         8 &      0.00 &        37 &        37 \\ 
    SCAGR7 &  129 &  185 &  2.11e-04 &  2.80e-04 &        18 &      0.43 &        21 &        65 \\ 
    SCFXM1 &  322 &  592 &  6.19e-04 &  4.35e-04 &        24 &      0.38 &       188 &       413 \\ 
\textbf{SCFXM2} &  644 & 1184 &  5.48e-04 &  1.04e-03 &        27 &      0.02 &       690 &       672 \\ 
    SCFXM3 &  966 & 1776 &  8.73e-04 &  8.77e-04 &        28 &      0.01 &      1062 &      1074 \\ 
\textbf{SCRS8} &  485 & 1270 &  7.65e-04 &  1.42e-03 &        29 &      0.39 &       320 &       315 \\ 
     SCSD1 &   77 &  760 &  5.54e-04 &  5.54e-04 &         7 &      0.95 &       125 &       214 \\ 
     SCSD6 &  147 & 1350 &  5.86e-04 &  5.91e-04 &         8 &      0.94 &       346 &       411 \\ 
     SCSD8 &  397 & 2750 &  4.88e-04 &  5.10e-04 &        11 &      0.90 &       366 &       965 \\ 
    SCTAP1 &  300 &  660 &  5.31e-04 &  3.64e-03 &        19 &      0.31 &       114 &       179 \\ 
    SCTAP2 & 1090 & 2500 &  6.81e-04 &  2.30e-07 &        21 &      0.43 &       145 &       344 \\ 
    SCTAP3 & 1480 & 3340 &  7.81e-04 &  1.11e-07 &        22 &      0.48 &        54 &       451 \\ 
      SEBA & 1029 & 1550 &  5.92e-04 &  3.09e-04 &        23 &      0.04 &        43 &        70 \\ 
  SHARE1B* &  112 &  248 &  2.87e-03 &  8.18e-02 &        27 &      0.24 &       176 &       204 \\ 
   SHARE2B &   96 &  162 &  3.19e-04 &  3.90e-04 &        14 &      0.41 &        57 &       126 \\ 
   SHIP04L &  356 & 2162 &  6.55e-04 &  2.92e-04 &        27 &      0.61 &        13 &       215 \\ 
   SHIP08L &  688 & 4339 &  6.55e-04 &  5.41e-04 &        29 &      0.81 &       441 &      1056 \\ 
   SHIP08S &  416 & 2171 &  6.34e-04 &  3.56e-04 &        26 &      0.76 &        70 &       --- \\ 
   SHIP12S &  466 & 2293 &  2.12e-04 &  2.74e-05 &        33 &      0.71 &        18 &       541 \\ 
     STAIR &  362 &  544 &  6.56e-04 &  1.11e-02 &        16 &      0.29 &       292 &       294 \\ 
\textbf{STOCFOR2} & 2157 & 3045 &  5.63e-04 &  4.74e-05 &        39 &      0.08 &      1213 &       796 \\ 
\hline\hline
\end{tabular} }
\end{table}
\end{document}